\title{Periods of tropical Calabi--Yau hypersurfaces}
\author{Yuto Yamamoto}
\date{}
\begin{document}

\maketitle

\begin{abstract}
We consider the residual B-model variation of Hodge structure of Iritani defined by a family of toric Calabi--Yau hypersurfaces over a punctured disk $D \setminus \lc 0\rc$.
It is naturally extended to a logarithmic variation of polarized Hodge structure of Kato--Usui on the whole disk $D$.
By restricting it to the origin, we obtain a polarized logarithmic Hodge structure (PLH) on the standard log point.
In this paper, we describe the PLH in terms of the integral affine structure of the dual intersection complex of the toric degeneration in the Gross--Siebert program.

\end{abstract}

\section{Introduction}\label{sc:intro}

Suppose that we are given a family of K\"{a}hler manifolds $\lc V_q \rc_{q \in D_\varepsilon \setminus \lc 0 \rc}$ over a punctured disk $D_\varepsilon \setminus \lc 0 \rc$ whose monodromy is unipotent.
The variation of polarized Hodge structure over the punctured disk defined by the family is naturally extended to a logarithmic variation of polarized Hodge structure (LVPH) of Kato--Usui \cite{MR2465224} on the whole disk $D_\varepsilon$.
By restricting it to the origin $0 \in D_\varepsilon$, we can get a polarized logarithmic Hodge structure (PLH) on the standard log point.
We expect that this PLH is read off from the tropicalization of the original family $\lc V_q \rc_{q \in D_\varepsilon \setminus \lc 0\rc}$.
In this paper, we accomplish a first step in this direction for families of toric Calabi--Yau hypersurfaces and residual B-model variation of Hodge structure.

The residual B-model VHS and the ambient A-model VHS were introduced by Iritani \cite[Section 6]{MR3112512} in order to study Hodge theoretic mirror symmetry for Batyrev's mirror pairs of Calabi--Yau hypersurfaces \cite{MR1269718}.
It is known that for Batyrev's mirror pair of Calabi--Yau hypersurfaces $(Y, \check{Y}_\alpha)$, the residual B-model variation of Hodge structure of $\check{Y}_\alpha$ and the ambient A-model variation of Hodge structure of $Y$ are isomorphic including their integral structures via the mirror isomorphism \cite[Theorem 6.9]{MR3112512}.
The main theorem of this paper describes the PLH on the standard log point defined by the residual B-model variation of Hodge structure of Calabi--Yau hypersurfaces in terms of the dual intersection complex $B$ of a toric degeneration in the Gross--Siebert program \cite{MR2198802}, \cite{MR2213573}.
In order to prove the theorem, we compute the radiance obstruction of $B$, which is an invariant of integral affine manifolds introduced in \cite{MR760977}.

The precise setup of this paper is the following:
Let $d$ be a positive integer.
Let further $M$ be a free $\bZ$-module of rank $d+1$ and $N:=\Hom(M, \bZ)$ be the dual lattice.
We set $M_\bR:=M \otimes_\bZ \bR$ and $N_\bR:=N \otimes_\bZ \bR=\Hom(M, \bR)$.
Let $\Sigma \subset N_\bR, \Sigmav \subset M_\bR$ be unimodular fans 
(i.e., fans such that all of their cones are generated by a subset of a basis of $N$ and $M$ respectively)
whose fan polytopes $\Deltav \subset N_\bR, \Delta \subset M_\bR$ 
(i.e., the convex hulls of primitive generators of one-dimensional cones) are polar dual to each other.
Recall that we say that a function $\check{f} \colon M_\bR \to \bR$ is convex if it satisfies $\check{f} \lb t m_1+(1-t)m_2 \rb \leq t \check{f}(m_1)+(1-t)\check{f}(m_2)$ for any $t \in \ld 0, 1\rd$ and $m_1, m_2 \in M_\bR$.
We also say that a convex function $\check{f} \colon M_\bR \to \bR$ is \emph{strictly} convex on a complete fan in $M_\bR$ if $\check{f}$ is linear on each cone of dimension $d+1$ in the fan, and distinct cones of dimension $d+1$ correspond to distinct linear functions.
Let $\check{h} \colon M_\bR \to \bR$ be a piecewise linear function that is strictly convex on the fan $\Sigmav$, and $\varphiv \colon M_\bR \to \bR$ be the piecewise linear function corresponding to the anti-canonical sheaf on the toric varieties associated with the normal fan of $\Deltav$.
We assume that the function
$\check{h}':=\check{h}-\varphiv \colon M_\bR \to \bR$
is convex (not necessarily strictly convex) on $\Sigmav$.

We consider the $d$-sphere $B$ equipped with an integral affine structure with singularities which is constructed from the data $(\Sigma, \check{h})$ in \cite{MR2198802} as the dual intersection complex of the toric degeneration.
We will recall the construction in \pref{sc:construction}.
It is conjectured that maximally degenerating families of Calabi--Yau manifolds with Ricci-flat K\"{a}hler metrics converge to $d$-spheres with integral affine structures with singularities in the Gromov--Hausdorff topology \cite{MR1863732}, \cite{MR2181810}, and 
in the case of the toric degeneration, it is expected that it converges to the dual intersection complex $B$ of the toric degeneration.

Let $\iota \colon B_0 \hookrightarrow B$ denote the complement of singularities of $B$, and $\scT_\bZ$ be the local system on $B_0$ of lattices of integral tangent vectors.
We set $\scT_Q:=\scT_\bZ \otimes_\bZ Q$ for $Q=\bR, \bC$, and define
\begin{align}
H^\bullet_\mathrm{f} \lb B, \iota_\ast \bigwedge^\bullet \scT_\bZ \rb
&:=\bigoplus_{i=0}^d \left. H^i \lb B, \iota_\ast \bigwedge^i \scT_\bZ \rb \middle/ T_i \right. ,\\
H^\bullet \lb B, \iota_\ast \bigwedge^\bullet \scT_Q \rb
&:=\bigoplus_{i=0}^d H^i \lb B, \iota_\ast \bigwedge^i \scT_Q \rb,
\end{align}
where $T_i \subset H^i \lb B, \iota_\ast \bigwedge^i \scT_\bZ \rb$ is the torsion subgroup.
They have the graded ring structure induced by the wedge product.
Let further $X_\Sigmav$ be the complex toric manifold associated with $\Sigmav$, and $\iota \colon Y \hookrightarrow X_\Sigmav$ be an anti-canonical hypersurface.
For $0 \leq i \leq d$, we set
\begin{align}
H^{2i}_{\mathrm{amb}}\lb Y, \bZ \rb&:=\mathrm{Im} \lb \iota^\ast \colon H^{2i}(X_\Sigmav, \bZ) \to H^{2i}(Y, \bZ) \rb,\\
H^\bullet_{\mathrm{amb}}\lb Y, \bZ \rb&:=\bigoplus_{i=0}^d H^{2i}_{\mathrm{amb}}\lb Y, \bZ \rb.
\end{align}
The group $H^\bullet_{\mathrm{amb}}\lb Y, \bZ \rb$ also has the graded ring structure induced by the cup product.
Let $\Sigmav(1)$ denote the set of $1$-dimensional cones of $\Sigmav$, and $D_\rho$ be the restriction to $Y$ of the toric divisor on $X_\Sigmav$ corresponding to $\rho \in \Sigmav(1)$.
We write the primitive generator of $\rho \in \Sigmav(1)$ as $m_\rho \in \Delta \cap M$.
The following is the first main theorem of this paper.
This is a result of computation of the radiance obstruction $c_B \in H^1 \lb B, \iota_\ast \scT_\bR \rb$ of $B$.
See \pref{sc:integral} for the definition of radiance obstructions.

\begin{theorem}\label{th:1}
In the above setup, the following holds:
\begin{enumerate}
\item There is an injective graded ring homomorphism
\begin{align}
\psi \colon H^\bullet_{\mathrm{amb}}\lb Y, \bZ \rb \hookrightarrow  H^\bullet_\mathrm{f} \lb B, \iota_\ast \bigwedge^\bullet \scT_\bZ \rb.
\end{align}
\item The radiance obstruction $c_B \in H^1 \lb B, \iota_\ast \scT_\bR \rb$ of $B$ is given by
\begin{align}
	c_B= \sum_{\rho \in \Sigmav(1)} \check{h}(m_\rho)  \psi (D_\rho).
\end{align}
\end{enumerate}
\end{theorem}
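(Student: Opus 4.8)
I would prove both assertions by explicit \v{C}ech computations. Fix the polyhedral decomposition $\mathscr{P}$ of $B$ constructed in \pref{sc:construction}, and let $\lc U_v \rc$ be the open cover of $B$ by the open stars of the vertices $v$ of $\mathscr{P}$. Each $U_v$ is contractible and carries a canonical integral affine chart: for a vertex lying on $\partial \Delta$ this is a ``fan chart'', modelled on a neighbourhood of the cone point of the tangent fan of $\mathscr{P}$ at $v$ and hence \emph{radiant}, while the remaining charts are polytopal. As a preliminary step I would record that, since $B \setminus B_0$ is a codimension-two locus of simple type, the \v{C}ech complex of $\iota_\ast \bigwedge^\bullet \scT_\bZ$ for $\lc U_v \rc$ computes $H^\bullet \lb B, \iota_\ast \bigwedge^\bullet \scT_\bZ \rb$, the stalk of $\iota_\ast \bigwedge^p \scT_\bZ$ along a cell $\sigma$ being $\bigwedge^p$ of the integral tangent space of the smallest cell of $\mathscr{P}$ meeting $\sigma$; this is the kind of fact set up in \pref{sc:integral}. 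Both statements then reduce to cochain-level manipulations.

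For part (1), I would start from the standard presentation of $H^\bullet \lb X_\Sigmav, \bZ \rb$ by the divisor classes $D_\rho$, $\rho \in \Sigmav(1)$, with the Stanley--Reisner relations and the linear relations $\sum_{\rho \in \Sigmav(1)} \la m_\rho, n \ra D_\rho = 0$ for $n \in N$. One defines a candidate map by sending $D_\rho$ to the class of the \v{C}ech $1$-cochain valued in $\iota_\ast \scT_\bZ$ supported on the overlaps between fan charts glued across the codimension-one cells of $\mathscr{P}$ associated with $\rho$, whose value on such an overlap is a primitive integral vector transverse to that cell, with orientation fixed by $m_\rho$. Then one checks that this cochain is a cocycle, that a Stanley--Reisner monomial in rays not spanning a cone of $\Sigmav$ goes to $0$ because the wedge of the corresponding cocycles has empty support, and that $\sum_\rho \la m_\rho, n \ra D_\rho$ goes to an explicit coboundary; hence the map descends to $H^\bullet \lb X_\Sigmav, \bZ \rb$ and intertwines cup product with wedge product. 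It remains to see that it further factors through the surjection $H^\bullet \lb X_\Sigmav, \bZ \rb \twoheadrightarrow H^\bullet_{\mathrm{amb}}\lb Y, \bZ \rb$ and is injective there; injectivity follows from a degreewise comparison, the combinatorial complex exhibiting a distinguished subring onto which the $D_\rho$ map isomorphically (this subring is the toric part of the tropical mirror of $Y$).

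For part (2), I would use the \v{C}ech description of the radiance obstruction recalled in \pref{sc:integral}: after normalising the charts at their vertices, the developing maps satisfy $\mathrm{dev}_v = A_{vw}\, \mathrm{dev}_w + b_{vw}$ on each nonempty $U_v \cap U_w$, where the $A_{vw}$ are the transition matrices of the local system $\scT_\bR$ and the $\iota_\ast \scT_\bR$-valued \v{C}ech $1$-cocycle $(b_{vw})$ represents $c_B$; concretely $b_{vw}$ is the position of $w$ read in the fan chart at $v$, so $(b_{vw})$ records all vertex positions of $\mathscr{P}$. The point is then that these positions are prescribed by how $B$ is cut out of $f$: because $f(n) = \min_{m \in \Delta \cap M}\lc \check{h}(m) + \la m, n \ra \rc$ attaches the height $\check{h}(m_\rho)$ to the lattice point $m_\rho$, contracting the tropical hypersurface places the vertices so that, modulo coboundaries, $(b_{vw})$ decomposes as $\sum_{\rho \in \Sigmav(1)} \check{h}(m_\rho)$ times the cocycle representing $\psi(D_\rho)$ from part (1). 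Making this decomposition explicit yields $c_B = \sum_{\rho \in \Sigmav(1)} \check{h}(m_\rho)\, \psi(D_\rho)$.

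The main obstacle will not be the \v{C}ech formalism but the combinatorial dictionary between the two incarnations of the data --- the decomposition $\mathscr{P}$ of $B$ built from the corner locus of $f$ versus the fan $\Sigmav$ together with the heights $\check{h}(m_\rho)$ --- and, within it, choosing the affine charts near the vertices of $\mathscr{P}$ sharply enough that the vertex-position cocycle computing $c_B$ lands on precisely the cocycles representing the $\psi(D_\rho)$, with the correct integral normalisations and signs, and that every construction is compatible with $\iota_\ast$ across $B \setminus B_0$. Verifying that $\psi$ is well defined on $H^\bullet_{\mathrm{amb}}\lb Y, \bZ \rb$ (killing the Stanley--Reisner relations and the kernel of $\iota^\ast$) and is injective, rather than merely a map out of the free algebra on the $D_\rho$, is the other delicate point, and it is here that \pref{cd:conv} and the unimodularity of $\Sigma$ and $\Sigmav$ enter.
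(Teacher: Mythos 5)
Your overall strategy --- a direct \v{C}ech computation of both $\psi$ and $c_B$ on a cover subordinate to the polyhedral structure of $B$ --- matches the paper, and for part (2) your outline is essentially correct: after normalizing each chart so that a chosen vertex of the minimal cell goes to the origin, the cocycle representing $c_B$ records differences of vertex positions, and the identity $v = \sum_{\rho \in \Sigmav(1)} \check{h}(m_\rho)\, n(v,\rho)$ (coming from the defining equations $\la m_{\rho_i}, v \ra = -\check{h}(m_{\rho_i})$ at each vertex $v$) yields the claimed decomposition into the cocycles $\psi(D_\rho)$. You should make that identity explicit, but the plan is sound.

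The serious gap is your treatment of injectivity in part (1). You assert it ``follows from a degreewise comparison, the combinatorial complex exhibiting a distinguished subring onto which the $D_\rho$ map isomorphically.'' No such purely combinatorial identification is available, and this is not what happens in the paper. The kernel of $\psi \colon H^\bullet(X_\Sigmav,\bZ) \to H^\bullet(B, \iota_\ast\bigwedge^\bullet\scT_\bZ)$ is $\mathrm{Ann}([Y])$ modulo torsion, and establishing this takes two nontrivial inputs your sketch does not supply. The inclusion $\psi^{-1}(T) \subset \mathrm{Ann}([Y])$ (with $T$ the torsion of the target) requires showing that $\psi$ intertwines the intersection form $Y \cdot D_{\rho_1}\cdots D_{\rho_d}$ with the top wedge pairing on $H^\bullet(B,\iota_\ast\bigwedge^\bullet\scT_\bZ)$; this is \pref{lm:5}, a delicate explicit computation with the vectors $n(v,\rho)$ and the integral lengths of edges of $\Deltav^{\check{h}'}$, followed by Poincar\'e duality on $X_\Sigmav$. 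The reverse inclusion $\psi^{-1}(T) \supset \mathrm{Ann}([Y])$ requires the dimension equality $\dim H^{2i}_\mathrm{amb}(Y,\bC) = \dim H^i(B,\iota_\ast\bigwedge^i\scT_\bC)$ for all $i$; for $i > d/2$ the paper must chain together the self-duality of \pref{cr:sct}, the comparison between $H^i(B,\iota_\ast\bigwedge^{d-i}\scT^\ast_\bC)$ and logarithmic Dolbeault cohomology, the base change theorem for the associated toric degeneration, and Batyrev's mirror Hodge number identity. As \pref{rm:unimodular} notes, without unimodularity these dimensions can disagree, so no ``obvious'' combinatorial isomorphism onto a subring exists, and a proof that does not invoke (some version of) these theorems cannot be correct. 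A smaller point: the paper covers $B$ by open stars of barycenters of all cells of $\tilde{\scP}$, so $k$-fold overlaps are indexed by flags $\tau_0 \prec \cdots \prec \tau_k$ and the cochain $\psi(x_{\rho_1}\cdots x_{\rho_k})$ is defined on such flags via vertex-selection maps $\xi_i$. Your proposed cover by vertex stars of $\scP$ is coarser, and your description of $\psi(D_\rho)$ being ``supported on overlaps between fan charts glued across codimension-one cells'' does not match the combinatorics of a vertex-star cover, whose pairwise overlaps are indexed by edges rather than facets; controlling integral signs and torsion with the coarser cover is likely to be harder, not easier.
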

The assumption that the fans $\Sigma, \Sigmav$ are unimodular is crucial in \pref{th:1}.1.
See \pref{rm:unimodular}.

We move on to the discussion on the relation with residual B-model variation of Hodge structure.
Let $K := \bC\lc t \rc$ be the convergent Laurent series field over $\bC$, i.e., the field of Laurent series $\sum_{j \in \bZ}c_j t^j$ that have only finitely many coefficients with negative index and whose positive part is convergent in a neighborhood of $t=0$. 
It has the standard non-archimedean valuation 
\begin{align}\label{eq:val}
	\mathrm{val} \colon K \longrightarrow \bZ \cup \{ \infty \},\quad k=\sum_{j \in \bZ}c_j t^j \mapsto \min \lc j \in \bZ \relmid c_j \ne 0 \rc.
\end{align}
We consider a Laurent polynomial $F=\sum_{m \in \Delta \cap M} k_m x^m \in K[x^\pm_{1}, \cdots, x^\pm_{d+1}]$ over $K$ such that the function $\Delta \cap M \to \bZ$ given by $m \mapsto \mathrm{val}(k_m)$ can be extended to a piecewise linear function $\check{g} \colon M_\bR \to \bR$ that is strictly convex on the fan $\Sigmav$.
We assume that the function
$\check{g}-\varphiv \colon M_\bR \to \bR$
is convex (not necessarily strictly convex) on $\Sigmav$ again.
In the following, let $B$ denote the $d$-sphere with an integral affine structure with singularities constructed from the data $\lb \Sigma, \check{g} \rb$.
We write the image of the map 
\begin{align}\label{eq:psic1}
\psi \otimes_\bZ \bC \colon H^\bullet_\mathrm{amb} \lb Y, \bC \rb \hookrightarrow H^\bullet \lb B, \iota_\ast \bigwedge^\bullet \scT_\bC \rb
\end{align}
as $H^\bullet_{\psi} \lb B, \iota_\ast \bigwedge^\bullet \scT_\bC \rb$, and let $H^i_\mathrm{\psi} \lb B, \iota_\ast \bigwedge^i \scT_\bC \rb$ denote the image of $H^{2i}_\mathrm{amb} \lb Y, \bC \rb$ by the map \eqref{eq:psic1}.
We define $H^\mathrm{amb}_{A, \bZ, 0} \subset H^\bullet_\mathrm{amb}(Y, \bC)$ by
\begin{align}\label{eq:intext}
H^\mathrm{amb}_{A, \bZ, 0}:= \lc \lb 2 \pi \sqrt{-1} \rb^{-d} \widehat{\Gamma}_Y  
\cup \lb 2 \pi \sqrt{-1} \rb^{\frac{\mathrm{deg}}{2}} \mathrm{ch}(\iota^\ast \scE) \relmid \scE \in K(X_\Sigmav) \rc,
\end{align}
where $\widehat{\Gamma}_Y$ denotes the Gamma class of $Y$, and consider the lattice structure $H^\bullet_{\psi, \bZ} \lb B, \iota_\ast \bigwedge^\bullet \scT_\bC \rb$ of $H^\bullet_{\psi} \lb B, \iota_\ast \bigwedge^\bullet \scT_\bC \rb$ defined as the image of $H^\mathrm{amb}_{A, \bZ, 0}$ by the map \eqref{eq:psic1}.
We also consider the pairing
\begin{align}\label{eq:cup1}
Q \colon H^\bullet \lb B, \iota_\ast \bigwedge^\bullet \scT_\bC \rb \times H^\bullet \lb B, \iota_\ast \bigwedge^\bullet \scT_\bC \rb \to H^d \lb B, \iota_\ast \bigwedge^d \scT_\bC \rb \cong \bC
\end{align}
defined by
\begin{eqnarray}
Q(\alpha, \beta):=\left\{ \begin{array}{ll}
\lb 2 \pi \sqrt{-1} \rb^d (-1)^i \alpha \wedge \beta & i+j=d \\
0 & \mathrm{otherwise,}\\
\end{array} \right.
\end{eqnarray}
where $\alpha \in H^i \lb B, \iota_\ast \bigwedge^i \scT_\bC \rb, \beta \in H^j \lb B, \iota_\ast \bigwedge^j \scT_\bC \rb$.
We consider the following PLH.
See \pref{sc:lhs} for the definition of PLH.

\begin{definition-lemma}\label{dl:period}
The following triple $\lb H_\bZ^\mathrm{trop}, Q_\mathrm{trop}, \scrF_\trop \rb$ defines a polarized logarithmic Hodge structure on the standard log point $\lc 0 \rc$:
\begin{itemize}
\item the locally constant sheaf $H_\bZ^\mathrm{trop}$ on $\lc 0 \rc^\mathrm{log}$ whose stalk is isomorphic to $H^\bullet_{\psi, \bZ} \lb B, \iota_\ast \bigwedge^\bullet \scT_\bC \rb$ and the monodromy is given by the cup product of $\exp \lb - 2 \pi \sqrt{-1} c_B \rb$,
\item the $(-1)^d$-symmetric pairing
\begin{align}
Q_\mathrm{trop} \colon H^\bullet_{\psi, \bZ} \lb B, \iota_\ast \bigwedge^\bullet \scT_\bC \rb \times H^\bullet_{\psi, \bZ} \lb B, \iota_\ast \bigwedge^\bullet \scT_\bC \rb \to \bZ
\end{align}
defined as the restriction of the pairing \eqref{eq:cup1},
\item the decreasing filtration $\scrF_\trop=\lc \scrF^p_\mathrm{trop} \rc_{p=1}^d$ of $\scO_{\lc 0 \rc}^\mathrm{log} \otimes_\mathbb{Z} H_\bZ^\mathrm{trop} \cong \scO_{\lc 0 \rc}^\mathrm{log} \otimes_\mathbb{C} H^\bullet_{\psi} \lb B, \iota_\ast \bigwedge^\bullet \scT_\bC \rb$ given by 
\begin{align}\label{eq:filt}
\scrF^p_\mathrm{trop}:=\scO_{\lc 0 \rc}^\mathrm{log} \otimes_\bC \lb \bigoplus_{i=0}^{d-p} H^i_\mathrm{\psi} \lb B, \iota_\ast \bigwedge^i \scT_\bC \rb \rb.
\end{align}
\end{itemize}
We call this polarized logarithmic Hodge structure the tropical period of $B$.
\end{definition-lemma}

Let $\varepsilon$ be a positive real number that is smaller than the radius of convergence of every coefficient $k_m$ of the Laurent polynomial $F=\sum_{m \in \Delta \cap M} k_m x^m \in K[x^\pm_{1}, \cdots, x^\pm_{d+1}]$.
We set $D_\varepsilon :=\lc z \in \bC \relmid |z| < \varepsilon \rc$.
For each element $q \in D_\varepsilon \setminus \lc 0 \rc$, we consider the polynomial $f_q \in \bC[x^\pm_{1}, \cdots, x^\pm_{d+1}]$ obtained by substituting $q$ to $t$ in $F$.
Let $V_q$ be the complex hypersurface defined by $f_q$ in the complex toric manifold $X_\Sigma$ associated with $\Sigma$.
When $\varepsilon$ is sufficiently small, this is a smooth Calabi--Yau hypersurface.
The family of Calabi--Yau hypersurfaces $\lc V_q \rc_{q \in D_\varepsilon \setminus \lc 0 \rc}$ defines the residual B-model variation of Hodge structure on $D_\varepsilon \setminus \lc 0 \rc$.
It is extended to the LVPH on $D_\varepsilon$.
See \pref{sc:ext} for details about the extension.
The following is the second main theorem of this paper.
The claim that the tropical period of \pref{dl:period} is a PLH follows from this theorem.

\begin{theorem}\label{th:2}
The inverse image of the above LVPH by the inclusion $\lc 0 \rc \hookrightarrow D_{\varepsilon}$ is isomorphic to the tropical period of $B$.
\end{theorem}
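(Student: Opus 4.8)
The plan is to prove \pref{th:2} by matching, one at a time, the four pieces of data that make up a polarized logarithmic Hodge structure on the standard log point $\lc 0 \rc$: the underlying integral local system on $\lc 0 \rc^\mathrm{log}$, its monodromy, the polarization, and the Hodge filtration over $\scO_{\lc 0 \rc}^\mathrm{log}$. Write $\lb \mathcal{H}_\bZ, Q_\mathcal{H}, \scrF_\mathcal{H} \rb$ for the triple obtained by pulling back the extended LVPH of \pref{sc:ext} along $\lc 0 \rc \hookrightarrow D_\varepsilon$; by the theory of Kato--Usui this is a PLH, so the content of \pref{th:2} is a comparison of it with the tropical period of \pref{dl:period}, and the claim that the tropical period is itself a PLH will be a corollary of that comparison. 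The common frame of reference is Iritani's mirror description of the residual B-model variation of Hodge structure \cite{MR3112512}: near the large complex structure limit $q = 0$, after choosing an appropriate flat frame, the residual local system is identified with $H^\bullet_\mathrm{amb}\lb Y, \bC \rb$, its distinguished integral structure is $H^\mathrm{amb}_{A, \bZ, 0}$ from \eqref{eq:intext}, and the Hodge filtration corresponds to the Hodge filtration of the ambient A-model variation of Hodge structure of $Y$.

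First I would treat the lattice and the monodromy. By its very definition in \pref{dl:period}, $H^\bullet_{\psi, \bZ}\lb B, \iota_\ast \bigwedge^\bullet \scT_\bC \rb$ is the image of $H^\mathrm{amb}_{A, \bZ, 0}$ under the graded ring isomorphism $\psi \otimes_\bZ \bC$ of \eqref{eq:psic1} onto its image, so composing with Iritani's identification yields an isomorphism of lattices $\mathcal{H}_\bZ \cong H_\bZ^\mathrm{trop}$. For the monodromy, the logarithm $N$ of the local monodromy of the residual B-model variation around $q = 0$ acts, in Iritani's frame, as cup product by $-2\pi\sqrt{-1}\sum_{\rho \in \Sigmav(1)} \mathrm{val}(k_{m_\rho}) D_\rho$: the degeneration parameter $q = t$ is matched via the mirror map with a K\"{a}hler parameter of $Y$, and the monomial--divisor correspondence turns the valuations $\mathrm{val}(k_m)$ into ambient divisor classes. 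Since $\check h$ was chosen to extend $m \mapsto \mathrm{val}(k_m)$ on $\Sigmav$, we have $\check h(m_\rho) = \mathrm{val}(k_{m_\rho})$, so \pref{th:1}(2) gives $\sum_{\rho} \check h(m_\rho)\, \psi(D_\rho) = c_B$; as $\psi$ is a graded ring homomorphism it intertwines $\exp(N)$ with cup product by $\exp\lb -2\pi\sqrt{-1}\, c_B \rb$, which is precisely the monodromy prescribed in \pref{dl:period}.

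Next I would match the Hodge filtration and the polarization. Transporting the Hodge filtration of the residual B-model variation through Iritani's mirror theorem gives the Hodge filtration of the ambient A-model variation of $Y$, and Schmid's nilpotent orbit theorem identifies its limit at the large radius point with the degree filtration $F^p_\mathrm{lim} = \bigoplus_{i=0}^{d-p} H^{2i}_\mathrm{amb}\lb Y, \bC \rb$: there the quantum connection degenerates to classical cup product by divisor classes, so the grading by cohomological degree is exactly the splitting of the monodromy weight filtration determined by $N$. Applying $\psi \otimes_\bZ \bC$ in a flat frame and tensoring with $\scO_{\lc 0 \rc}^\mathrm{log}$ turns this into $\scrF^p_\mathrm{trop}$ of \eqref{eq:filt}. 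Finally, the polarization of the residual B-model variation is, up to sign and a power of $2\pi\sqrt{-1}$, the Poincar\'{e} pairing on the residual part of $H^d\lb V_q \rb$; under Iritani's identification it becomes the pairing $\lb \alpha, \beta \rb \mapsto \lb 2\pi\sqrt{-1} \rb^d (-1)^i \int_Y \alpha \cup \beta$ on $H^\bullet_\mathrm{amb}\lb Y, \bC \rb$, and since $\psi$ is a graded ring homomorphism that is an isomorphism in top degree, this is carried by $\psi$ to the restriction of $Q$ from \eqref{eq:cup1}, i.e.\ to $Q_\mathrm{trop}$. Assembling the four identifications proves \pref{th:2}, and \pref{dl:period} follows from it.

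I expect the matching of the Hodge filtration to be the main obstacle: it requires making Iritani's comparison between the residual B-model variation and the ambient A-model variation of $Y$ precise enough near $q = 0$ to extract the limiting filtration, and then invoking the nilpotent orbit theorem to identify that limit with the degree filtration. The remaining normalizations --- the precise sign and power of $2\pi\sqrt{-1}$ in the polarization, and the exact form of $N$ --- while routine, must be carried through consistently with the conventions fixed in \pref{sc:lhs} and \pref{sc:mirror}.
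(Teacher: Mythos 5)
Your proposal follows the same route as the paper: use Iritani's mirror theorem to transfer the extended residual B-model LVPH to the ambient A-model side, compute the monodromy logarithm as cup product with $-2\pi\sqrt{-1}\sum_{\rho}\mathrm{val}(k_{m_\rho})D_\rho$ and match it with $c_B$ via \pref{th:1}(2), identify the limiting Hodge filtration with the cohomological-degree filtration on $H^\bullet_{\mathrm{amb}}(Y,\bC)$, and transport the pairing and lattice through $\psi$. The one step you flag as ``the main obstacle'' --- extracting the limiting frame and lattice at $q=0$ --- is exactly what the paper makes precise by forming Deligne's canonical extension $\widetilde{\scrH}_A$ and invoking \cite[Corollary 10.2.6]{MR1677117} to show $\tilde\varphi(s(\scE))(0)=(2\pi\sqrt{-1})^{-d}\widehat\Gamma_Y\cup(2\pi\sqrt{-1})^{\deg/2}\mathrm{ch}(\iota^*\scE)$, so that the fiber of the extension at $0$ is $H^{\mathrm{amb}}_{A,\bZ,0}$; otherwise your outline matches the paper's argument.
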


There are several previous studies on the relationship between periods and tropical geometry.
It is known that the valuation of the $j$-invariant of an elliptic curve over a non-archimedean valuation field coincides with the cycle length of the tropical elliptic curve obtained by tropicalization \cite{MR2457725}, \cite{MR2570928}.
The definition of periods for general tropical curves was given in \cite{MR2457739}.
It was also shown in \cite{MR2576286} that the leading term of the period map of a degenerating family of Riemann surfaces is given by the period of the tropical curve obtained by tropicalization.
Ruddat--Siebert computed periods of toric degenerations constructed from wall structures \cite{RS19}.
They calculated the integrations of holomorphic volume forms over cycles constructed from tropical $1$-cycles on the intersection complex of the central fibers.
Abouzaid--Ganatra--Iritani--Sheridan \cite{AGIS18} also computed asymptotics of period integrals of toric Calabi--Yau hypersurfaces over cycles that are expected to be isotopic to Lagrangian cycles mirror to line bundles coming from the ambient space by using tropical geometry.

The organization of this paper is as follows:
In \pref{sc:integral}, we recall the definitions of integral affine manifolds with singularities and their radiance obstructions.
In \pref{sc:construction}, we review the construction in \cite[Section 3]{MR2198802} of integral affine spheres with singularities in the case of Calabi--Yau hypersurfaces.
In \pref{sc:proof1}, we give a proof of \pref{th:1}.
In \pref{sc:ku}, we recall the definition of polarized logarithmic Hodge structures on the standard log point, and review how a variation of polarized Hodge structure on a punctured disk is extended to a LVPH on the whole disk.
In \pref{sc:mirror}, we briefly recall the definitions of residual B-model/ambient A-model Hodge structure, and the mirror symmetry between them.
In \pref{sc:periods}, we discuss the relation between tropical periods and logarithmic Hodge theory.
\pref{th:2} is proved in this section.

\par
{\it Acknowledgment: } 
I am most grateful to my advisor Kazushi Ueda for his encouragement and helpful advice.
Some parts of this work were done during my visits to Yale University and University of Cambridge.
I thank Sam Payne for his encouragement on this project, valuable comments on an earlier draft of this paper, and the financial support for the visit.
I am also grateful to Mark Gross for helpful discussions and communications.
I learned the results of \cite{HZ02} and \cite{MR2187503}, and the base change theorem \cite[Theorem 4.2]{MR2669728}, \cite[Theorem 1.10]{FFR19} from him.
The visit to University of Cambridge was supported by JSPS Overseas Challenge Program for Young Researchers.
I also stayed at University of Geneva and attended the exchange student program ``Master Class in Geometry, Topology and Physics" by NCCR SwissMAP in the academic year 2016/2017.
During the stay, I learned a lot about tropical geometry from Grigory Mikhalkin.
I thank him for helpful discussions and sharing his insights and intuitions.
I also thank Helge Ruddat for explaining the results of \cite{RS19} and \cite{Rud20}, and many beneficial comments on the draft of this paper.
\pref{rm:unimodular}, \pref{rm:res} and \pref{rm:rs19} are based on his comments.
I am also grateful to Yat-Hin Suen for helpful discussions about Gross--Siebert program.
I also thank the anonymous referee for pointing out some mistakes on an earlier draft of this paper and giving many comments which helped me to improve this paper.
This work was supported by IBS-R003-D1, Grant-in-Aid for JSPS Research Fellow (18J11281), and the Program for Leading Graduate Schools, MEXT, Japan. \\

\section{Integral affine structures with singularities}\label{sc:integral}

Let $N$ be a free $\bZ$-module of rank $d$, and set $N_\bR:=N \otimes_\bZ \bR$ and $\mathrm{Aff}(N_\bR):= N_\bR \rtimes \GL(N)$.
Note that the linear part of $\mathrm{Aff}(N_\bR)$ is integral, while the translational part is real.

\begin{definition}
An \emph{integral affine manifold} is a real topological manifold $B$ with an atlas of coordinate charts $\psi_i \colon U_i \to N_\bR$ such that the restriction of any transition function $\psi_i \circ \psi_j^{-1} \colon U_i \cap U_j \to N_\bR$ to any connected component of $U_i \cap U_j$ is contained in $\mathrm{Aff}(N_\bR)$.
\end{definition}

Let $B$ be an integral affine manifold.
We give an affine bundle structure to the tangent bundle $TB$ of $B$ as follows:
For each $U_i$ and $x \in U_i$, we set an affine isomorphism
\begin{align}
\theta_{i,x} \colon T_xB \to N_\bR,\quad v \mapsto \psi_i(x)+d\psi_i(x)v,
\end{align}
and define an affine trivialization by
\begin{align}
	\theta_i \colon TU_i \to U_i \times N_\bR,\quad (x, v) \mapsto (x, \theta_{i,x}(v)),
\end{align}
where $v \in T_xB$.
This gives an affine bundle structure to $TB$.
We write $TB$ with this affine bundle structure as $T^{\mathrm{aff}}B$.
Let $\scT_\bZ$ be the local system on $B$ of lattices of integral tangent vectors.
This is well-defined since the linear parts of all transition functions of $B$ are contained in $\GL(N)$.
We also set $\scT_\bR:=\scT_\bZ \otimes_\bZ \bR$.

\begin{definition} {\rm(\cite{{MR760977}})}
We choose a sufficiently fine open covering $\scU:=\lc U_i \rc_i$ of $B$ so that there is a flat section $s_i \in \Gamma (U_i, T^{\mathrm{aff}}B)$ for each $U_i$.
When we set $c_B((U_i, U_j)):=s_j-s_i$ for each $1$-simplex $(U_i, U_j)$ of $\scU$, the element $c_B$ becomes a \v{C}ech $1$-cocycle for $\scT_\bR$.
We call $c_B \in H^1(B, \scT_\bR)$ the \emph{radiance obstruction} of $B$.
\end{definition}

\begin{remark}
Radiance obstructions can also be described in terms of torsors as follows:
Let $\mathrm{Aff}_B$ be the sheaf of affine maps from $B$ to $\bR$, and $\scS$ be the sheaf of splittings of the projection $\mathrm{Aff}_B \to \mathrm{Aff}_B/\bR \cong \scT^\ast_\bR$, where $\scT^\ast_\bR:=\cHom\lb \scT_\bR, \bR \rb$.
The sheaf $\cHom \lb \mathrm{Aff}_B/\bR, \bR \rb=\scT_\bR$ acts on $\scS$ simply transitively, and $\scS$ becomes a $\scT_\bR$-torsor.
The radiance obstruction $c_B \in H^1(B, \scT_\bR)$ is the isomorphism class of $\scS$.
\end{remark}

\begin{definition} {\rm(\cite[Definition 1.15]{{MR2213573}})}
An \emph{integral affine manifold with singularities} is a topological manifold $B$ with an integral affine structure on $B_0:=B \setminus \Gamma$, where $\Gamma \subset B$ is a locally finite union of locally closed submanifolds of codimension greater than or equal to 2.
\end{definition}

The following condition for integral affine manifolds with singularities was mentioned in \cite[Section 3.1]{MR2181810} as the fixed point property.

\begin{condition}\label{fix}
For any $x \in \Gamma$, there is a small neighborhood $U$ of $x$ such that the holonomy representation $\pi_1(U \setminus \Gamma) \to \mathrm{Aff}(N_\bR)$ has a fixed vector.
\end{condition}

In Condition \ref{fix}, note that $U \setminus \Gamma$ is connected, since the codimension of $\Gamma$ is greater than or equal to $2$.
Let $B$ be an integral affine manifold with singularities satisfying Condition \ref{fix}.
We write the complement of the singular locus as $\iota \colon B_0 \hookrightarrow B$.
Let further $\scT_\bZ$ be the local system on $B_0$ of lattices of integral tangent vectors.
We set $\scT_\bR:=\scT_\bZ \otimes_\bZ \bR$ again.

\begin{definition}
We choose a sufficiently fine covering $\lc U_i \rc_i$ of $B$ so that there is a flat section $s_i \in \Gamma (U_i \cap B_0, T^{\mathrm{aff}}B_0)$ for each $U_i$.
This is possible as long as we assume Condition \ref{fix}.
When we set $c_B((U_i, U_j)):=s_j-s_i$, the element $c_B$ becomes a \v{C}ech $1$-cocycle for $\iota_\ast \scT_\bR$.
We call $c_B \in H^1(B, \iota_\ast \scT_\bR)$ the \emph{radiance obstruction} of $B$.
\end{definition}

\begin{remark}
The inclusion $\iota \colon B_0 \hookrightarrow B$ induces a map $\iota^\ast \colon H^1(B, \iota_\ast \scT_\bR) \hookrightarrow H^1(B_0, \scT_\bR)$.
Then we can see $\iota^\ast \lb c_B \rb = c_{B_0}$ from the definitions.
\end{remark}

\section{Constructions of  integral affine spheres}\label{sc:construction}

In this section, we recall the construction in \cite[Section 3]{MR2198802} of integral affine spheres with singularities in the case of Calabi--Yau hypersurfaces.
We basically work on the same setup and use the same notations as in the introduction.
However, the fans $\Sigma, \Sigmav$ do not need to be unimodular in this section.

In general, for a piecewise linear convex function $g \colon M_\bR \to \bR$, its Newton polytope $\Deltav^g$ is defined by
\begin{align}
\Deltav^g:=\lc n \in N_\bR \relmid \la m,n\ra \geq -g(m) \rc.
\end{align}
We define $B^{\check{h}}:=\partial \lb \Deltav^{\check{h}} \rb = \partial \lb \Deltav + \Deltav^{\check{h}'} \rb$.
We also consider
\begin{align}\label{eq:tdelta}
\tilde{\Delta}:=\lc (m, l) \in M_\bR \oplus \bR \relmid m \in \Delta, l \geq \check{h}'(m) \rc,
\end{align}
and take a subdivision $\tilde{\Sigma}$ of the normal fan of $\tilde{\Delta}$ satisfying the following conditions:
\begin{enumerate}
\item the fan $\lc \sigma \cap (N_\bR \oplus \lc 0\rc) \relmid \sigma \in \tilde{\Sigma} \rc$ coincides with the fan $\Sigma$,
\item every $1$-dimensional cone of $\tilde{\Sigma}$ not contained in $N_\bR \oplus \lc 0\rc$ is generated by a primitive vector $(n, 1)$ with $n \in \Deltav^{\check{h}'} \cap N$.
\end{enumerate}
Such a subdivision $\tilde{\Sigma}$ is called \textit{good} in \cite[Definition 3.8]{MR2198802}.
The fan $\tilde{\Sigma}$ consists of the following three sorts of cones (\cite[Observation 3.9]{MR2198802}):
\begin{enumerate}
\item cones of the form $C(\mu) \times \lc 0 \rc$ where $\mu \subset \partial \Deltav$ and $C(\mu) \in \Sigma$,
\item cones of the form $C(\mu) \times \lc 0 \rc + C(\nu) \times \lc 1 \rc$ where $\mu \subset \partial \Deltav,  \nu \subset \partial \Deltav^{\check{h}'}$, $C(\mu) \in \Sigma$, and $\mu + \nu$ is contained in a face of $\Deltav+\Deltav^{\check{h}'}$, 
\item cones contained in $C(\Deltav^{\check{h}'} \times \lc 1\rc)$,
\end{enumerate}
where $C(\mu), C(\nu), C(\Deltav^{\check{h}'} \times \lc 1\rc)$ denote the cones over $\mu, \nu, \Deltav^{\check{h}'} \times \lc 1\rc$ respectively.
Cones of the second type is called \textit{relevant}.
We set
\begin{align}
\scR(\tilde{\Sigma})&:=\lc (\mu, \nu) \relmid  C(\mu) \times \lc 0 \rc + C(\nu) \times \lc 1 \rc \mathrm{\ is\ a\ relevant\ cone\ of\ } \tilde{\Sigma} \rc,\\
\scP(\tilde{\Sigma})&:=\lc \mu + \nu \relmid (\mu, \nu) \in \scR(\tilde{\Sigma}) \rc.
\end{align}
Then one has
\begin{align}
B^{\check{h}}=\bigcup_{(\mu, \nu) \in \scR(\tilde{\Sigma})} \mu + \nu,
\end{align}
and $\scP(\tilde{\Sigma})$ is a polyhedral subdivision of $B^{\check{h}}$ \cite[Proposition 3.12]{MR2198802}.

Take the barycentric subdivision $\mathrm{Bar}( \scP(\tilde{\Sigma}) )$ of $\scP(\tilde{\Sigma})$.
We consider the union $\Gamma(\tilde{\Sigma}) \subset B^{\check{h}}$ of all simplices of $\mathrm{Bar}( \scP(\tilde{\Sigma}) )$ that do not contain a vertex of $\scP(\tilde{\Sigma})$ and do not intersect the interior of a maximal face of $\scP(\tilde{\Sigma})$.
For each vertex $v \in \scP(\tilde{\Sigma})$, let $W_v$ be the union of interiors of all simplices of $\mathrm{Bar}( \scP(\tilde{\Sigma}) )$ containing $v$.
The vertex $v$ is written as $v=\mu + \nu$, where $(\mu, \nu) \in \scR(\tilde{\Sigma})$.
Note that the dimensions of $\mu, \nu$ are $0$.
Let $n_\mu \in N_\bR$ be the element such that $\lc n_\mu \rc= \mu$.
We define a chart
\begin{align}\label{eq:proj}
\psi_v \colon W_v \to N_\bR / (\bR \cdot n_\mu)
\end{align}
via the projection.
For each maximal-dimensional face $\tau \in \scP(\tilde{\Sigma})$ of $B$, the affine subspace $A_\tau$ of $N_\bR$ spanned by $\tau$ is written as
\begin{align}
A_\tau = \lc n \in N_\bR \relmid \check{h}'(m_\tau)+\la m_\tau, n \ra=0 \rc,
\end{align}
where $m_\tau$ is some element of  $\Delta \cap M$.
We define a chart
\begin{align}
\psi_ \tau \colon \mathrm{Int}(\tau) \hookrightarrow A_\tau
\end{align}
via the inclusion.
These charts $\psi_v$ and $\psi_\tau$ define an integral affine structure on $B^{\check{h}} \setminus \Gamma(\tilde{\Sigma})$ \cite[Proposition 3.14]{MR2198802}.

The monodromy of this integral affine structure is described as follows:
Let $v_0=\mu_0+\nu_0, v_1=\mu_1+\nu_1$ be two vertices of $\scP(\tilde{\Sigma})$, and $\tau_0, \tau_1$ be two maximal-dimensional faces of $\scP(\tilde{\Sigma})$ containing $v_0$ and $v_1$.
Let $\gamma$ be a loop that starts at $v_0$, passes through $\mathrm{Int}(\tau_0)$, $v_1$, $\mathrm{Int}(\tau_1)$ in this order, and comes back to the original point $v_0$.
Let further $\scT_\bZ$ be the sheaf on $B^{\check{h}} \setminus \Gamma(\tilde{\Sigma})$ of integral tangent vectors, and $\scT_{\bZ, v_0}$ be its stalk at $v_0$.

\begin{proposition} {\rm(\cite[Proposition 3.15]{{MR2198802}})}\label{pr:monodromy}
The parallel transport $T_\gamma \colon \scT_{\bZ, v_0} \to \scT_{\bZ, v_0}$ along the loop $\gamma$ is given by
\begin{align}
T_\gamma(n)=n+\la m_{\tau_1}-m_{\tau_0}, n \ra (n_{\mu_1}-n_{\mu_0}),
\end{align}
where we identify $\scT_{\bZ, v_0}$ with $N / (\bZ \cdot n_{\mu_0})$.
\end{proposition}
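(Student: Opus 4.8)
The plan is to compute $T_\gamma$ directly, as the composite of the differentials of the coordinate changes encountered along $\gamma$. Reading off the loop, $\gamma$ enters in turn the charts $\psi_{v_0}$, $\psi_{\tau_0}$, $\psi_{v_1}$, $\psi_{\tau_1}$ and then returns to $\psi_{v_0}$, so $T_\gamma$ is a product of four transition differentials. For a vertex $v = \mu + \nu$ of $\scP(\tilde{\Sigma})$ let $\pi_\mu \colon N_\bR \to N_\bR / (\bR \cdot n_\mu)$ be the linear projection, so that $\psi_v$ is the restriction of $\pi_\mu$ to $W_v$; for a maximal face $\tau$ let $L_\tau := \lc u \in N_\bR \relmid \la m_\tau, u \ra = 0 \rc$ be the linear subspace parallel to $A_\tau$, so that $\psi_\tau$ is the inclusion into $A_\tau$. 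On the overlap $W_v \cap \mathrm{Int}(\tau)$, which is nonempty precisely when $v$ is a vertex of $\tau$, the transition $\psi_v \circ \psi_\tau^{-1}$ equals $\pi_\mu|_{A_\tau}$; hence its differential is the linear projection $\pi_\mu|_{L_\tau} \colon L_\tau \to N_\bR / (\bR \cdot n_\mu)$, and the differential of the transition in the opposite direction is the inverse lift $\lb \pi_\mu|_{L_\tau} \rb^{-1}$. Thus, under the identification $\scT_{\bZ, v_0} \cong N / (\bZ \cdot n_{\mu_0})$, I would write
\begin{align}
T_\gamma = \lb \pi_{\mu_0}|_{L_{\tau_1}} \rb \circ \lb \pi_{\mu_1}|_{L_{\tau_1}} \rb^{-1} \circ \lb \pi_{\mu_1}|_{L_{\tau_0}} \rb \circ \lb \pi_{\mu_0}|_{L_{\tau_0}} \rb^{-1},
\end{align}
respecting the order in which the four legs of $\gamma$ are traversed.

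The next, and crucial, ingredient is the normalization $\la m_{\tau_j}, n_{\mu_i} \ra = -1$, valid whenever $v_i$ is a vertex of $\tau_j$, i.e. for the four pairs $(i, j) \in \lc (0,0), (1,0), (1,1), (0,1) \rc$ that occur along $\gamma$; here $m_{\tau_j}$ is a primitive lattice point. This can be read off from the construction: $A_{\tau_j}$ is a supporting hyperplane of $\Deltav + \Deltav^{\check{h}'}$ with inner normal $m_{\tau_j}$, the face it cuts out is a Minkowski sum of a face of $\Deltav$ containing $n_{\mu_i}$ with a face of $\Deltav^{\check{h}'}$, and reflexivity of $\Deltav$ then pins $\la m_{\tau_j}, \cdot \ra$ to $-1$ on that face of $\Deltav$; alternatively, it is exactly the compatibility that makes the charts of \cite[Proposition 3.14]{MR2198802} an \emph{integral} affine atlas, since it is what yields $\bZ \cdot n_{\mu_i} \oplus \lb L_{\tau_j} \cap N \rb = N$, so that the lifts $\lb \pi_{\mu_i}|_{L_{\tau_j}} \rb^{-1}$ are isomorphisms of lattices $N / (\bZ \cdot n_{\mu_i}) \cong L_{\tau_j} \cap N$. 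With it in hand, the lift of a class $\bar n \in N_\bR / (\bR \cdot n_{\mu_i})$ with chosen representative $n \in N_\bR$ into $L_{\tau_j}$ is the explicit vector $n + \la m_{\tau_j}, n \ra n_{\mu_i}$.

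The remaining computation is then short, and I would simply carry it out. The first lift sends $\bar n$ to $w_0 := n + \la m_{\tau_0}, n \ra n_{\mu_0} \in L_{\tau_0}$; applying $\pi_{\mu_1}$ and lifting into $L_{\tau_1}$ produces $w_1 := w_0 + \la m_{\tau_1}, w_0 \ra n_{\mu_1}$, and here $\la m_{\tau_1}, w_0 \ra = \la m_{\tau_1}, n \ra - \la m_{\tau_0}, n \ra$ since $\la m_{\tau_1}, n_{\mu_0} \ra = -1$; finally applying $\pi_{\mu_0}$ annihilates both $n_{\mu_0}$-terms and leaves
\begin{align}
T_\gamma(\bar n) = \bar n + \lb \la m_{\tau_1}, n \ra - \la m_{\tau_0}, n \ra \rb \overline{n_{\mu_1}}.
\end{align}
Since $\overline{n_{\mu_0}} = 0$ in $N / (\bZ \cdot n_{\mu_0})$ and the functional $\la m_{\tau_1} - m_{\tau_0}, - \ra$ descends to this quotient (again by the normalization), this is the claimed identity $T_\gamma(n) = n + \la m_{\tau_1} - m_{\tau_0}, n \ra \lb n_{\mu_1} - n_{\mu_0} \rb$. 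I expect the main obstacle to be neither displayed computation but rather two bookkeeping matters: getting the order and orientation of the four legs of $\gamma$ right, and pinning down the normalization $\la m_{\tau_j}, n_{\mu_i} \ra = -1$ from the description of good subdivisions in \cite[Section 3]{MR2198802}; once these are settled, the rest is linear algebra.
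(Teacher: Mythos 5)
The paper does not give a proof of this statement: it is quoted directly as \cite[Proposition 3.15]{MR2198802} and used as a black box. Your argument is a correct, self-contained reconstruction of the Gross--Siebert computation, carried out exactly in the natural way: you write $T_\gamma$ as the composite of the four transition differentials between the explicit charts $\psi_{v_i}$, $\psi_{\tau_j}$ of the atlas, and observe that each lift into $L_{\tau_j}$ is determined by the normalization $\la m_{\tau_j}, n_{\mu_i}\ra=-1$, which indeed follows from reflexivity of $\Deltav$ (the facet of $\Deltav + \Deltav^{\check h'}$ supported by $m_{\tau_j}$ has $\Deltav$-summand the face on which $\la m_{\tau_j},\cdot\ra=-1$, and $n_{\mu_i}$ lies in that face). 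The displayed computation is correct, the order of the four legs is consistent with $\gamma$, and the final reduction modulo $\bR\cdot n_{\mu_0}$ recovers the stated formula. Note one small inaccuracy in your sourcing, inherited from the paper: the defining equation of $A_\tau$ in \pref{sc:construction} should read $\check h(m_\tau)+\la m_\tau,n\ra=0$ rather than $\check h'$ (compare the vertex equations $\la m_{\rho_i},v\ra=-\check h(m_{\rho_i})$ used in the proof of \pref{th:1}.2); this affects only the affine translation of the charts and not their linear parts, so it has no bearing on the monodromy calculation.
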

Let $\iota \colon B^{\check{h}} \setminus \Gamma(\tilde{\Sigma}) \hookrightarrow B^{\check{h}}$ denote the inclusion.
\begin{corollary}\label{cr:sct}
In the above situation, we have the following:
\begin{enumerate}
\item The integral affine manifold with singularities $(B^{\check{h}}, \scP(\tilde{\Sigma}))$ satisfies Condition \ref{fix}.
\item One has $\iota_\ast \bigwedge^d \scT_\bZ \cong \bZ$ and $\iota_\ast \bigwedge^i \scT_\bZ \cong \iota_\ast \bigwedge^{d-i} \scT^\ast_\bZ$.
\end{enumerate}
\end{corollary}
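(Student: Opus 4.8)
The plan is to deduce both assertions from the explicit monodromy formula in Proposition \ref{pr:monodromy}. For part (1), I would fix a singular point $x \in \Gamma(\tilde\Sigma)$ and a small neighbourhood $U$ of $x$ in $B^{\check h}$, and observe that a loop $\gamma$ in $U \setminus \Gamma(\tilde\Sigma)$ based at a nearby vertex $v_0 = \mu_0 + \nu_0$ has monodromy of the transvection form $T_\gamma(n) = n + \langle m_{\tau_1} - m_{\tau_0}, n\rangle (n_{\mu_1} - n_{\mu_0})$. Since the loops through $x$ only see finitely many maximal faces $\tau_i$ all incident to a common vertex near $x$, all the vectors $n_{\mu_i} - n_{\mu_0}$ lie in a common one-dimensional space (in the codimension-two-singularity situation the relevant $\mu$'s differ in a single direction), so the image of $T_\gamma - \mathrm{id}$ is contained in the line $\bR \cdot (n_{\mu_1} - n_{\mu_0})$ for all $\gamma$; dually, the vector $n_{\mu_0}$ (or more invariantly, the direction of $\mu_0$) is fixed by the whole local monodromy group because $\langle m_{\tau_i}, n_{\mu_0}\rangle$ is constant along the face. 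Hence Condition \ref{fix} holds. Care is needed to check that this works at every stratum of $\Gamma(\tilde\Sigma)$, not just at generic points; one invokes the local model of the Gross--Siebert singularities, where the monodromy group is generated by such transvections all sharing a fixed covector and a fixed vector.

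For part (2), the isomorphism $\iota_\ast \bigwedge^d \scT_\bZ \cong \bZ$ should follow because the local monodromy transformations $T_\gamma$ all lie in $\mathrm{SL}$: each is unipotent (a single Jordan block with eigenvalue $1$, since it is a transvection), hence has determinant $1$, so $\bigwedge^d \scT_\bZ$ is a trivial local system on $B_0 := B^{\check h}\setminus\Gamma(\tilde\Sigma)$, and since $B_0$ is connected its sections form a copy of $\bZ$; the sheaf $\iota_\ast$ of this trivial rank-one local system over the orientable base $B^{\check h}$ (a $d$-sphere) is again $\bZ$. For the second isomorphism $\iota_\ast \bigwedge^i \scT_\bZ \cong \iota_\ast \bigwedge^{d-i} \scT^\ast_\bZ$, I would use the perfect pairing $\bigwedge^i \scT_\bZ \otimes \bigwedge^{d-i}\scT_\bZ \to \bigwedge^d \scT_\bZ \cong \bZ$ on $B_0$, which identifies $\bigwedge^i \scT_\bZ \cong \bigwedge^{d-i}\scT_\bZ^\ast$ as local systems on $B_0$ (the trivialization of $\bigwedge^d \scT_\bZ$ being monodromy-invariant by the determinant-one observation), and then apply $\iota_\ast$ to both sides.

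The main obstacle I anticipate is part (1): one has to argue that the local fundamental group $\pi_1(U\setminus\Gamma)$ around an \emph{arbitrary} point of the singular locus (including non-generic, higher-codimension strata where several ``legs'' of $\Gamma(\tilde\Sigma)$ meet) still has a common fixed vector, which requires understanding the combinatorics of how the barycentric subdivision $\mathrm{Bar}(\scP(\tilde\Sigma))$ produces $\Gamma(\tilde\Sigma)$ and how the charts $\psi_v$, $\psi_\tau$ glue near such strata. The cleanest route is to reduce, via Proposition \ref{pr:monodromy} and the description of $\scP(\tilde\Sigma)$ in terms of $\scR(\tilde\Sigma)$, to the standard local model of Gross--Siebert, in which the fixed-point property is known (cf. \cite[Section 3.1]{MR2181810}); once that reduction is in place, part (2) is essentially formal.
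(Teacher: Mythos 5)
Your overall strategy matches the paper's: the paper proves both items with a one-line appeal to \pref{pr:monodromy} and the fact that $B^{\check h}\cong S^d$ (and for item 2 also cites \cite[Theorem~3.23(2)]{MR2669728}), and you are simply unpacking what ``obvious'' means. Your treatment of item~2 is sound: the linear monodromies of \pref{pr:monodromy} are transvections, hence lie in $\SL$, so $\bigwedge^d\scT_\bZ$ is the trivial local system on $B_0$; since $\Gamma(\tilde\Sigma)$ has real codimension $\ge 2$, $B_0$ is connected and $\iota_\ast\underline\bZ_{B_0}=\underline\bZ_B$ (this codimension argument, not orientability of $S^d$, is what gives $\iota_\ast\bigwedge^d\scT_\bZ\cong\bZ$); and the perfect pairing $\bigwedge^i\scT_\bZ\otimes\bigwedge^{d-i}\scT_\bZ\to\bigwedge^d\scT_\bZ$ then gives the duality after applying $\iota_\ast$.

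For item~1 your identification of the fixed vector is off. You propose $n_{\mu_0}$, but in the chart $\psi_{v_0}\colon W_{v_0}\to N_\bR/(\bR\cdot n_{\mu_0})$ the stalk $\scT_{\bZ,v_0}$ is identified with $N/(\bZ\cdot n_{\mu_0})$, in which $n_{\mu_0}$ is zero — so ``$n_{\mu_0}$ is fixed'' is true but vacuous, and ``the direction of $\mu_0$'' is not defined since $\mu_0$ is a single point. Likewise, the various generators $T_\gamma$ around a given singular point do \emph{not} all share the same shear covector $m_{\tau_1}-m_{\tau_0}$ or shear vector $n_{\mu_1}-n_{\mu_0}$. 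The correct observation is the following: a point $x\in\Gamma(\tilde\Sigma)$ lies in the relative interior of some cell $\tau\in\scP(\tilde\Sigma)$ with $1\le\dim\tau\le d-1$. Any maximal cell $\tau'\succ\tau$ satisfies $A_{\tau'}=\lc n\relmid\check h'(m_{\tau'})+\la m_{\tau'},n\ra=0\rc\supset\tau$, so for any two such maximal cells $\tau_0,\tau_1$ the covector $m_{\tau_1}-m_{\tau_0}$ is constant on (hence annihilates tangent vectors to) $\tau$. By \pref{pr:monodromy}, every local monodromy near $x$ therefore fixes every integral tangent vector to $\tau$ pointwise, and such nonzero vectors exist because $\dim\tau\ge1$. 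Moreover the inclusion of $\tau$ into $N_\bR$ gives a flat affine section near $x$ (the affine action preserves the developing image of $\tau$), which is exactly what Condition~\ref{fix} needs. This is the clean form of your ``all transvections annihilate a common subspace'' idea, and it works uniformly at every stratum, so the reduction to a standard local model you worry about is not actually required. A last small slip: you say each $T_\gamma$ is ``a single Jordan block''; a transvection in dimension $d$ has one $2\times2$ Jordan block and $d-2$ trivial ones, but this does not affect the determinant computation.
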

\begin{proof}
The space $B^{\check{h}}$ is homeomorphic to the $d$-dimensional sphere (e.g. \cite[Theorem 2.5]{MR2187503}).
The statements are obvious from this fact and  \pref{pr:monodromy}.
There is the same statement as the second one also in \cite[Theorem 3.23 (2)]{MR2669728}.
\end{proof}

\begin{theorem}{\rm(\cite[Theorem 3.16]{{MR2198802}})}
Assume that for any cones $C \in \Sigma$, $\check{C} \in \Sigmav$, the intersections $C \cap \partial \Deltav$ and $\check{C} \cap \partial \Delta$ are elementary simplices.
Then $(B^{\check{h}}, \scP(\tilde{\Sigma}))$ is simple in the sense of \cite[Definition 1.60]{MR2213573}.
\end{theorem}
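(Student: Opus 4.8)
The plan is to verify the local conditions in the definition of \emph{simplicity} \cite[Definition 1.60]{MR2213573} directly from the combinatorial description of $(B^{\check{h}}, \scP(\tilde{\Sigma}))$ obtained above. Recall that simplicity is a condition on the affine holonomy near each point $x$ of the discriminant $\Gamma(\tilde{\Sigma})$: from the local monodromy representation one extracts, after suitable identifications, finite families of lattice polytopes $\Delta_1, \dots, \Delta_p$ in the tangent space and $\check{\Delta}_1, \dots, \check{\Delta}_p$ in the cotangent space (the \emph{monodromy polytopes}), and one requires that every $\Delta_i$ and every $\check{\Delta}_i$ be a standard (elementary) simplex, with the dimensions of the $\Delta_i$ and $\check{\Delta}_i$ adding up to the local codimension of $\Gamma(\tilde{\Sigma})$ and with the transverse structure a product of the associated basic models. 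Since $\Gamma(\tilde{\Sigma})$ is assembled from simplices of $\mathrm{Bar}(\scP(\tilde{\Sigma}))$, it suffices to check this at a point $x$ in the relative interior of such a simplex, where the local monodromy group is generated by loops of the type in \pref{pr:monodromy}.

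First I would compute this local monodromy. Such an $x$ has in the closure of its stratum a set of vertices $v_a = \mu_a + \nu_a$ of $\scP(\tilde{\Sigma})$ (with $(\mu_a, \nu_a) \in \scR(\tilde{\Sigma})$) and a set of maximal faces $\tau_b \in \scP(\tilde{\Sigma})$; by \pref{pr:monodromy} the local monodromy is generated by the transvections $n \mapsto n + \la m_{\tau_b} - m_{\tau_{b'}}, n \ra (n_{\mu_a} - n_{\mu_{a'}})$. Hence, up to lattice translation, the monodromy polytopes are the convex hulls $\Delta(x) := \mathrm{conv}\{ n_{\mu_a} \} \subset N_\bR$ and $\check{\Delta}(x) := \mathrm{conv}\{ m_{\tau_b} \} \subset M_\bR$, refined into the families $\Delta_i, \check{\Delta}_i$ according to the commuting blocks of transvections, i.e.\ according to the face of $\scP(\tilde{\Sigma})$ through $x$.

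Next I would match these with the hypothesis. By the construction of $\scP(\tilde{\Sigma})$ from $\scR(\tilde{\Sigma})$, the vertices $n_{\mu_a}$ relevant at $x$ generate a single cone $C \in \Sigma$, and since $\Deltav$ is the fan polytope of $\Sigma$ one has $C \cap \partial \Deltav = \mathrm{conv}\{ n_{\mu_a} \}$; dually the $m_{\tau_b}$ are the primitive ray generators of a single cone $\check{C} \in \Sigmav$, and since $\Delta$ is the fan polytope of $\Sigmav$ one has $\check{C} \cap \partial \Delta = \mathrm{conv}\{ m_{\tau_b} \}$. The standing assumption that all such intersections are elementary simplices thus forces every monodromy polytope to be a standard simplex; the join/product decomposition of the transverse model is then read off from the splitting $v = \mu + \nu$ together with the strict convexity of $\check{h}$ on $\Sigmav$ (and \pref{cd:conv}), which is what makes the cells $\mu + \nu$ glue with the transverse structure demanded by simplicity. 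Running this over all strata of $\Gamma(\tilde{\Sigma})$ gives the theorem.

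The main obstacle is the bookkeeping in higher codimension: one has to confirm that the local monodromy group really does break up into commuting basic blocks matched with a join decomposition of $\mathrm{conv}\{ n_{\mu_a} \}$ and $\mathrm{conv}\{ m_{\tau_b} \}$, and that the induced transverse family is locally trivial, so that the full stratified form of \cite[Definition 1.60]{MR2213573}, and not merely its codimension-two part, is met. This is precisely the analysis of Gross and Siebert, and I would follow the argument of \cite[Theorem 3.16]{MR2198802}.
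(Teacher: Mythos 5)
Your proposal moves in the same direction as the paper's verification, but it works at the level of points $x$ of the discriminant and the monodromy loops around them, whereas the paper works directly with the Gross--Siebert monodromy polytopes $\Delta_f(\tau)$ and $\Deltav_e(\tau)$ of \cite[Definition 1.58]{MR2213573}, attached to cells $\tau \in \scP(\tilde{\Sigma})$ with $1 \leq \dim\tau \leq d-1$. The computation you carry out --- that the relevant convex hulls are $\mathrm{conv}\{n_{\mu_a}\}$ and $\mathrm{conv}\{m_{\tau_b}\}$, matched with $C \cap \partial \Deltav$ and $\check{C} \cap \partial \Delta$ --- is the correct one, and agrees with the paper up to a lattice translation by $n_{\mu_0}$ and $m_{\tau_0}$ so the polytopes contain the origin. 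However, you explicitly defer what you call the ``main obstacle'' --- confirming that the local monodromy group splits into commuting blocks meeting the full stratified form of Definition 1.60 --- to the general argument of Gross--Siebert. This is precisely the step where the paper does the actual work, and it cannot be waved at.

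The paper's point is that in the hypersurface case this obstacle disappears because of a single explicit observation your proposal does not make: for every cell $\tau$, the polytope $\Deltav_e(\tau)$ is the same for \emph{every} edge $e \in \Omega(\tau)$, and dually $\Delta_f(\tau)$ is the same for \emph{every} maximal face $f \in R(\tau)$. Concretely, fixing one maximal cell $\tau_0 \succ \tau$, the paper computes $\Deltav_e(\tau) = \mathrm{conv}\{m_{\tau'} - m_{\tau_0} : \tau' \succ \tau, \dim\tau'=d\}$ independently of $e$; fixing one vertex $v_0 = \mu_0 + \nu_0 \prec \tau$, it computes $\Delta_f(\tau) = \mathrm{conv}\{n_{\mu'} - n_{\mu_0} : \mu'+\nu'=v' \prec \tau, \dim v'=0\}$ independently of $f$. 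Because of this $e$- and $f$-independence, the finite families $\Delta_1, \ldots, \Delta_p$ and $\check{\Delta}_1, \ldots, \check{\Delta}_p$ appearing in Definition 1.60 collapse to a single pair per cell, so the combinatorial compatibility you flag as the bookkeeping problem is satisfied trivially, and all that remains is the elementary-simplex hypothesis, which the cone identifications supply directly. This collapse is what makes the hypersurface case checkable in a few lines rather than requiring the full complete-intersection machinery of \cite[Theorem 3.16]{MR2198802}; without it, your proposal is incomplete exactly where the verification has content.
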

In our setup (the case of hypersurfaces), the above theorem can be checked as follows:
For a cell $\tau \in \scP(\tilde{\Sigma})$ with $1 \leq \dim \tau \leq d-1$, we set
\begin{align}
\Omega(\tau)&:=\lc e \in \scP(\tilde{\Sigma}) \relmid \dim e=1, e \prec \tau \rc, \\
R(\tau)&:=\lc f \in \scP(\tilde{\Sigma}) \relmid \dim f=d-1, \tau \prec f \rc.
\end{align}
Fix a maximal-dimensional cell $\tau_0 \in \scP(\tilde{\Sigma})$ containing $\tau$.
For $e \in \Omega(\tau)$, the polytope $\Deltav_e(\tau)$ of \cite[Definition 1.58]{MR2213573} becomes the convex hull of
\begin{align}
\lc m_{\tau'} - m_{\tau_0} \in M_\bR \relmid \tau' \succ \tau, \dim \tau' =d \rc.
\end{align}
Therefore, the polytope $\Deltav_e(\tau)$ is independent of $e \in \Omega(\tau)$.
The cone $\check{C}$ generated by 
\begin{align}\label{eq:generators}
\lc m_{\tau'} \relmid \tau' \succ \tau, \dim \tau' =d \rc, 
\end{align}
is contained in $\Sigmav$.  
The intersection $\check{C} \cap \partial \Delta$ is the convex hull of \eqref{eq:generators}.
The translation of this by $- m_{\tau_0}$ coincides with $\Deltav_e(\tau)$.
From the assumption, we can see that $\Deltav_e(\tau)$ is an elementary polytope.

Similarly, fix a vertex $v_0 =\mu_0+\nu_0 \in \scP(\tilde{\Sigma})$ contained in $\tau$.
For $f \in R(\tau)$, the polytope $\Delta_f(\tau)$ of \cite[Definition 1.58]{MR2213573} becomes the convex hull of
\begin{align}
\lc n_{\mu'}-n_{\mu_0} \in N_\bR \relmid \mu'+\nu'=v' \prec \tau, \dim v'=0 \rc.
\end{align}
Therefore, the polytope $\Delta_f(\tau)$ is also independent of $f \in R(\tau)$.
The cone $C$ generated by 
\begin{align}\label{eq:generators2}
\lc n_{\mu'} \relmid \mu'+\nu'=v' \prec \tau, \dim v'=0  \rc, 
\end{align}
is contained in $\Sigma$.  
The intersection $C \cap \partial \Delta$ is the convex hull of \eqref{eq:generators2}.
The translation of this by $- n_{\mu_0}$ coincides with $\Delta_f(\tau)$.
From the assumption, we can see that $\Delta_f(\tau)$ is also an elementary polytope.
Hence, $(B^{\check{h}}, \scP(\tilde{\Sigma}))$ is simple.
The following claim is now obvious.

\begin{proposition}\label{pr:monopoly}
If the fan $\Sigmav \subset M_\bR$ (resp. $\Sigma \subset N_\bR$) is unimodular, then the polytope $\Deltav_e(\tau)$ (resp. $ \Delta_f(\tau)$) of \cite[Definition 1.58]{MR2213573} is a standard simplex for any $\tau \in \scP(\tilde{\Sigma})$ with $1 \leq \dim \tau \leq d-1$, and $e \in \Omega(\tau)$ (resp. $f \in R(\tau)$).
\end{proposition}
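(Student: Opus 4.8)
The plan is to read the claim off directly from the explicit description of $\Deltav_e(\tau)$ and $\Delta_f(\tau)$ obtained in the discussion preceding the statement, combined with the standard fact that the primitive ray generators of a unimodular cone form a part of a $\bZ$-basis of the ambient lattice. No new geometric input is needed: the argument establishing that $(B^{\check{h}}, \scP(\tilde{\Sigma}))$ is simple already produced the polytopes $\Deltav_e(\tau)$ and $\Delta_f(\tau)$ as translates of simplices cut out of $\Delta$ and $\Deltav$, and all that remains is lattice bookkeeping to upgrade ``elementary'' to ``standard''.

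First I would recall from the argument above that $\Deltav_e(\tau)$ is the translate by $-m_{\tau_0}$ of $\check C \cap \partial \Delta$, where $\check C \in \Sigmav$ is the cone generated by the vectors in \eqref{eq:generators}, and that $\check C \cap \partial \Delta$ is exactly the convex hull of those vectors. Since $\tau_0$ is chosen to be a maximal-dimensional cell of $\scP(\tilde{\Sigma})$ containing $\tau$, the vector $m_{\tau_0}$ is one of the generators in \eqref{eq:generators}, so $\Deltav_e(\tau)$ is the convex hull of the origin together with the vectors $m_{\tau'} - m_{\tau_0}$ ranging over $\tau' \succ \tau$ with $\dim \tau' = d$ and $\tau' \neq \tau_0$. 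Next, assuming $\Sigmav$ unimodular, the generators in \eqref{eq:generators} form a part of a $\bZ$-basis of $M$; in particular they are linearly independent and span a saturated sublattice $L \subset M$. Then I would observe that passing from $\lc m_{\tau'} \rc$ to $\lc m_{\tau_0} \rc \cup \lc m_{\tau'} - m_{\tau_0} \rc_{\tau' \neq \tau_0}$ is a unimodular change of basis of $L$, and that the sublattice $L'$ spanned by $\lc m_{\tau'} - m_{\tau_0} \rc_{\tau' \neq \tau_0}$ is saturated in $M$: one has $L = \bZ m_{\tau_0} \oplus L'$, so $L/L'$ is free, and $L$ is saturated in $M$, hence so is $L'$. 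Therefore the vectors $m_{\tau'} - m_{\tau_0}$ form a part of a $\bZ$-basis of $M$; together with the origin they are affinely independent, so $\Deltav_e(\tau)$ is $\GL(M)$-equivalent to the convex hull of $0$ and a partial basis, i.e.\ a standard simplex. The statement for $\Delta_f(\tau)$ is proved verbatim, replacing $\Sigmav, \Delta, m_{\tau'}, m_{\tau_0}$ by $\Sigma, \Deltav, n_{\mu'}, n_{\mu_0}$ and using \eqref{eq:generators2} in place of \eqref{eq:generators}.

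I do not anticipate a genuine obstacle here, which is why the claim is asserted as obvious. The only points deserving a line of care are that $\check C$ (resp.\ the corresponding cone $C \in \Sigma$) is really a cone of the fan, so that unimodularity of $\Sigmav$ (resp.\ $\Sigma$) applies to it --- this is exactly what was recorded in the discussion above --- and the lattice argument showing that subtracting a fixed basis member from the others again yields a partial basis. Everything else is immediate from the descriptions of $\Deltav_e(\tau)$ and $\Delta_f(\tau)$, so the effect of the unimodularity hypothesis is precisely to strengthen the earlier ``elementary simplex'' conclusion to ``standard simplex''.
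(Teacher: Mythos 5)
Your proof is correct and takes essentially the same approach the paper intends: the paper merely asserts the proposition is ``now obvious'' after the explicit identification of $\Deltav_e(\tau)$ and $\Delta_f(\tau)$ as translated cone sections of $\Delta$ and $\Deltav$, and your argument supplies exactly the missing lattice bookkeeping (that subtracting a fixed member of a partial $\bZ$-basis from the others again yields a partial $\bZ$-basis) needed to upgrade ``elementary'' to ``standard'' under the unimodularity hypothesis.
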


\begin{remark}
There is also another construction of integral affine spheres with singularities by Haase and Zharkov, which was discovered independently \cite{HZ02}, \cite{MR2187503}.
\end{remark}

\section{Proof of \pref{th:1}}\label{sc:proof1}

We work on the same setup and use the same notations as in the introduction.
Let $(B, \tilde{\scP}):=(B^{\check{h}}, \scP(\tilde{\Sigma}))$ be the integral affine sphere with singularities that we constructed in \pref{sc:construction}.
We take the barycentric subdivision of $\tilde{\scP}$, and let $U_\tau$ denote the open star of the barycenter of $\tau \in \tilde{\scP}$.
We consider the \v{C}ech cohomology group $\check{H}^k(\scU, \iota_\ast \bigwedge^k \scT_\bZ)$ with respect to the open covering $\scU:=\lc U_\tau \rc_{\tau \in {\tilde{\scP}}}$ of $B$.

We recall that the cohomology ring $H^\bullet \lb X_\Sigmav, \bZ \rb$ of the ambient toric variety $X_\Sigmav$ is described in terms of the fan $\Sigmav$.
Let $\Sigmav(1)$ denote the set of $1$-dimensional cones of $\Sigmav$.
We write the primitive generator of each cone $\rho \in \Sigmav(1)$ as $m_\rho \in \Delta \cap M$.
We also associate an indeterminate $x_\rho$ with each $\rho \in \Sigmav(1)$.
Consider the polynomial ring $\bZ \ld x_\rho: \rho \in \Sigmav(1)\rd$.
Let $I$ denote its ideal generated by the monomials $x_{\rho_1} \cdots x_{\rho_k}$ such that the convex hull of $\rho_1, \cdots,  \rho_k$ is not in $\Sigmav$.
Let further $J$ be the ideal generated by $\lc \Sigma_{\rho \in \Sigmav(1)} \la m_\rho, n \ra x_\rho \rc_{n \in N}$.
Then we have
\begin{align}\label{eq:tcoh}
H^\bullet \lb X_\Sigmav, \bZ \rb \cong \bZ \ld x_\rho: \rho \in \Sigmav(1)\rd / (I+J)
\end{align}
(cf.~e.g.~\cite[Theorem 10.8]{MR495499}).

For each $\rho \in \Sigmav(1)$, we write the subset of $B=\partial \Deltav^{\check{h}}$ given by
\begin{align}
\lc n \in N_\bR \relmid \check{h}(m_\rho)+\la m_\rho, n \ra=0 \leq \check{h}(m_{\rho'})+\la m_{\rho'}, n \ra\ \mathrm{for\ any\ } \rho' \in \Sigmav(1) \rc
\end{align}
as $\sigma(\rho) \in \scP$.
We consider the natural polyhedral structure $\scP$ of $B$ that consists of $\lc \sigma(\rho) \rc_{\rho \in \Sigmav(1)}$ and all their faces.
The correspondence $\rho \leftrightarrow \sigma \lb \rho \rb$ gives a bijection between $\Sigmav(1)$ and the set of maximal-dimensional faces in $\scP$.
Let $\scP(0)$ be the set of $0$-dimensional faces in $\scP$.
For each $v \in \scP(0)$ and $\rho \in \Sigmav(1)$, we define an element $n(v, \rho) \in N$ as follows:
Let $\lc \rho_i \rc_{i=1, \cdots, d+1}$ be the set of cones in $\Sigmav(1)$ such that $v=\bigcap_{i=1}^{d+1} \sigma(\rho_i)$.
We define $n(v, \rho) \in N$ by the following $d+1$ equations:
\begin{eqnarray}\label{eq:mv}
\la m_{\rho_i}, n(v, \rho) \ra:=\left\{ \begin{array}{ll}
-1 & \rho=\rho_i \\
0 & \mathrm{otherwise}\\
\end{array} \right.
\quad 1 \leq i \leq d+1.
\end{eqnarray}
For each element $\tau \in \tilde{\scP}$, let $\pi(\tau)$ denote the minimal face in $\scP$ containing $\tau$.

We construct a ring homomorphism
\begin{align}\label{eq:psi}
\psi: \bZ \ld x_\rho: \rho \in \Sigmav(1)\rd \to \bigoplus_{i=0}^d H^i \lb B, \iota_\ast \bigwedge^i \scT_\bZ \rb
\end{align}
as follows:
For each monomial $x_{\rho_1} \cdots x_{\rho_k}$, we take $k$ arbitrary maps 
\begin{align}\label{eq:xi}
\xi_i \colon \scP \to \scP(0),\quad 1 \leq i \leq k
\end{align}
such that $\xi_i(\sigma)$ is a vertex of $\sigma$ for any $\sigma \in \scP$, and define $\psi(x_{\rho_1} \cdots x_{\rho_k}) \in \check{H}^k(\scU, \iota_\ast \bigwedge^k \scT_\bZ)$ by
\begin{align}\label{eq:phik}
\psi(x_{\rho_1} \cdots x_{\rho_k})\lb \lb U_{\tau_0}, \cdots, U_{\tau_k} \rb \rb
:=\bigwedge_{i=1}^k \lc n \lb \xi_i \lb \pi(\tau_i) \rb, \rho_i \rb - n \lb \xi_i \lb \pi(\tau_{i-1}) \rb, \rho_i \rb \rc
\end{align}
for each $k$-simplex $\lb U_{\tau_0}, \cdots, U_{\tau_k} \rb$ of $\scU$ such that $\tau_0 \prec \cdots \prec \tau_k$.
Note that the intersection $U_{\tau_0} \cap \cdots \cap  U_{\tau_k}$ is non-empty if and only if we can have $\tau_0 \prec \cdots \prec \tau_k$ by reordering $\tau_0, \cdots , \tau_k$.
We also set $\psi(1):=1 \in \bZ \cong \check{H}^0(\scU, \bZ)$.
Then the ring homomorphism $\psi$ is uniquely determined.
We will check that this map $\psi$ gives the map of \pref{th:1} in the following.

\begin{example}
Suppose $d=1$, and take $e_1^\ast, e_2^\ast$ as a basis of the lattice $M$.
Let $\Delta \subset M_\bR$ be the convex hull of $(2e_1^\ast-e_2^\ast)$, $(-e_1^\ast+2e_2^\ast)$, and $(-e_1^\ast-e_2^\ast)$.
Let further $\Sigmav \subset M_\bR$ be the unimodular complete fan which is shown on the left side of \pref{fg:curve}.
We consider the piecewise linear function $\check{h} \colon M_\bR \to \bR$ that is strictly convex on $\Sigmav$, which is determined by
\begin{align}
\check{h}(m)=
\lc
\begin{array}{ll}
3 &\quad  m: \mathrm{a\ vertex\ of\ } \Delta, \\
2 &\quad  m \in \Delta \cap M \mathrm{\ that\ is\ not\ a\ vertex\ of\ } \Delta. \\
\end{array}
\right.
\end{align}
When we take the normal fan of $\tilde{\Delta}$ \eqref{eq:tdelta} as the fan $\tilde{\Sigma}$, the polyhedral structure $\tilde{\scP}$ coincides with the natural polyhedral structure $\scP$.
The space $\lb B, \tilde{\scP}=\scP \rb$ is shown on the right side of \pref{fg:curve}.
\begin{figure}[htbp]
\vspace{0.5cm}
\hspace{1.8cm}
	\includegraphics[scale=0.7]{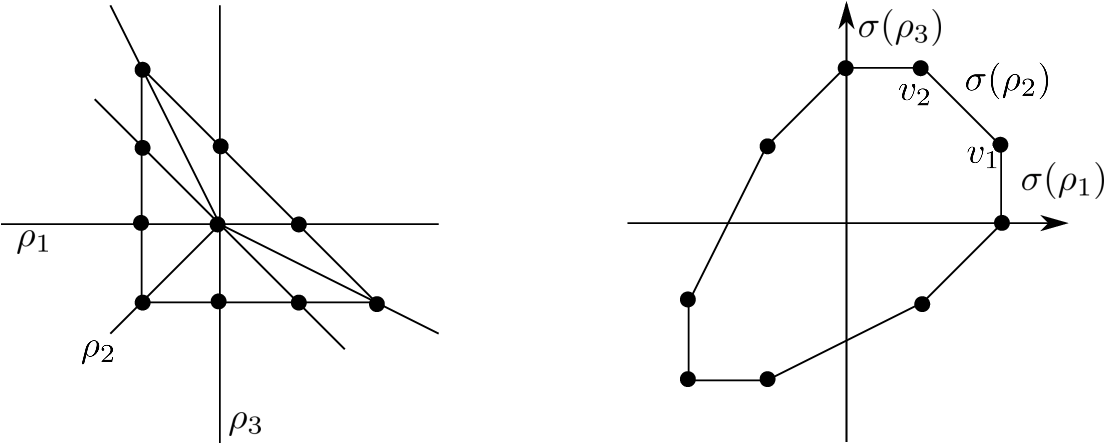}
\vspace{0.5cm}
	\caption{The fan $\Sigmav$ and the space $\lb B, \scP \rb$}
	\label{fg:curve}
\end{figure}
Let $\rho_1, \rho_2, \rho_3 \in \Sigmav(1)$ be the cones of dimension $1$ generated by 
$m_{\rho_1}=-e_1^\ast, m_{\rho_2}=-e_1^\ast-e_2^\ast, m_{\rho_3}=-e_2^\ast$ respectively, and  $v_1, v_2 \in B$ be the vertices defined by $v_1:=\sigma(\rho_1) \cap \sigma(\rho_2)$ and $v_2:=\sigma(\rho_2) \cap \sigma(\rho_3)$.
One can check 
\begin{align}
n(v_1, \rho)=
\lc
\begin{array}{ll}
e_1-e_2 &\quad \rho=\rho_1 \\
e_2 &\quad \rho=\rho_2 \\
0 &\quad \mathrm{otherwise},
\end{array}
\right.
\quad
n(v_2, \rho)=
\lc
\begin{array}{ll}
e_1 &\quad \rho=\rho_2 \\
-e_1+e_2 &\quad \rho=\rho_3 \\
0 &\quad \mathrm{otherwise}.
\end{array}
\right.
\end{align}
We try to compute $\psi \lb x_{\rho_2} \rb$.
Take a map $\xi_1 \colon \scP \to \scP(0)$ so that 
\begin{itemize}
\item $\xi(\sigma)=\sigma$, when $\sigma \in \scP(0)$,
\item $\xi \lb \sigma(\rho_2)\rb=v_2$,
\item $\xi(\sigma)$ is a vertex of $\sigma$ that is not either $v_1$ or $v_2$, when $\sigma \nin \scP(0)$ and $\sigma \neq \sigma(\rho_2)$.
\end{itemize}
(Then the map $\xi_1$ satisfies \pref{cd:xi1} which we will assume later.)
For $\tau_0 \prec \tau_1 \in \scP$, we have
\begin{align}
\psi \lb x_{\rho_2} \rb \lb \lb U_{\tau_0}, U_{\tau_1} \rb \rb=
\lc
\begin{array}{ll}
n \lb v_2, \rho_2\rb -n \lb v_1, \rho_2\rb=e_1-e_2 &\quad \tau_0=v_1, \tau_1= \sigma(\rho_2)\\
0 &\quad \mathrm{otherwise}.
\end{array}
\right.
\end{align}
The vector $e_1-e_2$ is a primitive tangent vector on $U_{\sigma(\rho_2)}$, and it turns out that $\psi(x_{\rho_2})$ defines the class $1 \in \bZ \cong H^1(B, \iota_\ast \scT_\bZ)$.
\end{example}

\begin{lemma}\label{lm:1}
The map $\psi$ is a well-defined graded ring homomorphism, and independent of the choices of the maps $\xi_i$.
\end{lemma}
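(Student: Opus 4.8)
The plan is to verify three things in turn: that formula \eqref{eq:phik} gives a genuine \v{C}ech cochain valued in the constructible sheaf $\iota_\ast \bigwedge^k \scT_\bZ$ (not just in the ambient lattice of germs of tangent vectors), that this cochain is a \v{C}ech cocycle, that its class is independent of the auxiliary maps $\xi_i$, and finally that the resulting assignment respects products of monomials. First I would unwind the combinatorics of the covering: on a simplex $(U_{\tau_0}, \dots, U_{\tau_k})$ with $\tau_0 \prec \cdots \prec \tau_k$, the intersection $U_{\tau_0} \cap \cdots \cap U_{\tau_k}$ meets $B_0$ inside the star of the barycenter of $\tau_0$, so sections of $\iota_\ast \bigwedge^k \scT_\bZ$ over it are identified with $\bigwedge^k$ of the stalk of $\scT_\bZ$ near (a suitable chamber around) $\pi(\tau_0)$; using the explicit charts $\psi_v$, $\psi_\tau$ of \pref{sc:construction} and \pref{pr:monodromy}, I would check that each difference $n(\xi_i(\pi(\tau_i)), \rho_i) - n(\xi_i(\pi(\tau_{i-1})), \rho_i)$ in \eqref{eq:mv} lies in this stalk (the point being that $n(v,\rho)$ and $n(v',\rho)$ differ, after parallel transport, by an integral multiple of the primitive normal direction, which is precisely killed in the quotient defining the stalk), so that the wedge is a well-defined local section.

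Next, the cocycle condition: applying the \v{C}ech differential to \eqref{eq:phik} on a $(k{+}1)$-simplex produces an alternating sum of wedges of the telescoping differences $n(\xi_i(\pi(\tau_j)), \rho_i) - n(\xi_i(\pi(\tau_{j-1})), \rho_i)$, which I expect to cancel in pairs exactly as in the standard computation that a wedge of ``difference $1$-cochains'' $d(s_j - s_i)$ is closed — formally this is the statement that $\psi$ on a degree-$k$ monomial is the cup product of the degree-$1$ classes $\psi(x_{\rho_i})$, and each $\psi(x_{\rho_i})$ is visibly a cocycle since $(U_{\tau_0},U_{\tau_1},U_{\tau_2}) \mapsto$ the telescoping sum vanishes. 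So I would actually prove multiplicativity first, reducing everything to the case $k=1$: show $\psi(x_{\rho_1}\cdots x_{\rho_k})$ as defined equals the \v{C}ech cup product $\psi(x_{\rho_1}) \smile \cdots \smile \psi(x_{\rho_k})$ (the wedge of values on consecutive faces is exactly the Alexander--Whitney formula for the cup product on the ordered simplicial/\v{C}ech complex of the nerve), which simultaneously gives the cocycle property and graded-commutativity up to the usual sign, and then handle $k=1$ by hand.

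For independence of the $\xi_i$: if $\xi_i$ and $\xi_i'$ are two choices of vertex-selection maps, I would produce an explicit \v{C}ech homotopy (a $(k{-}1)$-cochain) whose coboundary is the difference $\psi_{\xi} - \psi_{\xi'}$, built from the ``intermediate'' difference $n(\xi_i(\pi(\tau)),\rho_i) - n(\xi_i'(\pi(\tau)),\rho_i)$ on a single face $\tau$; again by multiplicativity it suffices to do this for $k=1$, where changing $\xi_1(v)$ to $\xi_1'(v)$ for a single vertex $v$ alters the $1$-cochain by an explicit coboundary. The main obstacle, I expect, is the very first step — carefully checking that all the lattice elements $n(\xi_i(\pi(\tau_i)),\rho_i) - n(\xi_i(\pi(\tau_{i-1})),\rho_i)$ actually descend to well-defined sections of $\iota_\ast \bigwedge^k\scT_\bZ$ over the relevant intersection, i.e.\ that they are compatible with the monodromy/identifications of \pref{pr:monodromy} across the different charts meeting $U_{\tau_0}\cap\cdots\cap U_{\tau_k}$, and in particular that wedging several such (each defined via a possibly different vertex $\xi_i$) still makes sense in the same stalk; once the bookkeeping of which stalk everything lives in is pinned down, the algebra is routine.
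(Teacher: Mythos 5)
Your proposal is correct and matches the paper's proof in all essentials: verify well-definedness of $\psi(x_\rho)((U_{\tau_0},U_{\tau_1}))$ as a section of $\iota_\ast\scT_\bZ$ (tangency to $\pi(\tau_1)$ via pairing with the $m_{\rho_i}$, plus monodromy invariance from \pref{pr:monodromy}), observe the telescoping cocycle identity, show independence of $\xi_1$ via an explicit $0$-cochain whose coboundary is the difference $\psi'-\psi$, and finally note that formula \eqref{eq:phik} exhibits $\psi(x_{\rho_1}\cdots x_{\rho_k})$ as the \v{C}ech cup product $\bigwedge_{i=1}^k\psi(x_{\rho_i})$ (pushed through $\bigwedge^k\iota_\ast\scT_\bZ\hookrightarrow\iota_\ast\bigwedge^k\scT_\bZ$), so that everything reduces to $k=1$. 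The only difference from the paper is cosmetic: you set up the cup-product reduction before handling $k=1$, whereas the paper treats $k=1$ first and invokes multiplicativity afterward.
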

\begin{proof}
First, we check that the vector
\begin{align}\label{eq:vector}
\psi(x_{\rho})\lb (U_{\tau_0},U_{\tau_1}) \rb=n(\xi_1(\pi(\tau_1)), \rho)-n(\xi_1(\pi(\tau_0)), \rho)
\end{align}
is a section of $\iota_\ast \scT_\bZ$ over $U_{\tau_0} \cap U_{\tau_1}$ for any $\rho \in \Sigmav(1)$.
Since we have $\pi(\tau_0) \prec \pi(\tau_1)$,
when the face $\pi(\tau_1)$ is the intersection of facets $\lc \sigma(\rho_i) \rc_{i=1}^{l}$, the vertices $\xi_1(\pi(\tau_0)), \xi_1( \pi(\tau_1))$ are contained in $\bigcap_{i=1}^l \sigma(\rho_i)$.
Hence, we have
\begin{align}
\la m_{\rho_i}, n(\xi_1(\pi(\tau_0)), \rho) \ra=\la m_{\rho_i}, n(\xi_1(\pi(\tau_1)), \rho) \ra
\end{align}
for $1 \leq i \leq l$, and the vector \eqref{eq:vector} is contained in the plane defined by
\begin{align}
\la m_{\rho_i}, n \ra=0,\quad 1 \leq i \leq l.
\end{align}
This is the tangent space of $\pi(\tau_1)$.
On the other hand, from \pref{pr:monodromy}, we can see that the vector \eqref{eq:vector} is monodromy invariant with respect to any loop in $U_{\tau_0} \cap U_{\tau_1}$.
Therefore, the vector \eqref{eq:vector} is a section of $\iota_\ast \scT_\bZ$ over $U_{\tau_0} \cap U_{\tau_1}$.

We can also check that $\psi(x_\rho)$ is a cocycle.
For a given $2$-simplex $(U_{\tau_0}, U_{\tau_1}, U_{\tau_2})$, we have
\begin{align}\nonumber
\delta \lb \psi (x_\rho) \rb \lb (U_{\tau_0}, U_{\tau_1}, U_{\tau_2}) \rb &=\lc n(\xi_1(\pi(\tau_2)), \rho)-n(\xi_1(\pi(\tau_1)), \rho) \rc \\
& \quad -\lc n(\xi_1(\pi(\tau_2)), \rho)-n(\xi_1(\pi(\tau_0)), \rho) \rc \\ \nonumber
& \quad +\lc n(\xi_1(\pi(\tau_1)), \rho)-n(\xi_1(\pi(\tau_0)), \rho)\rc \\ 
&= 0.
\end{align}

Next, we check that $\psi(x_\rho)$ does not depend on the choice of the map $\xi_1$.
Take another map $\xi_1' \colon \scP \to \scP(0)$ such that $\xi'_1(\sigma)$ is a vertex of $\sigma$ for any $\sigma \in \scP$, and let $\psi'(x_\rho) \in  \check{H}^1(\scU, \iota_\ast \bigwedge^1 \scT_\bZ)$ be the cohomology class defined by the choice of $\xi'_1$.
We define $\phi(x_\rho) \in \check{C}^0(\scU, \iota_\ast \scT_\bZ)$ by
\begin{align}\label{eq:phi}
\phi(x_\rho)((U_{\tau})) :=  n(\xi'_1(\pi (\tau)), \rho)-n(\xi_1(\pi (\tau)), \rho)
\end{align}
for each $0$-simplex $(U_{\tau})$ of $\scU$.
We will show that the coboundary of $\phi(x_\rho)$ coincides with $\psi(x_\rho)-\psi'(x_\rho)$.

First, we check that $\phi(x_\rho)$ is certainly an element of $\check{C}^0(\scU, \iota_\ast \scT_\bZ)$.
When the face $\pi(\tau)$ is the intersection of facets $\lc \sigma(\rho_i) \rc_{i=1}^{l}$, the vertices $\xi_1(\pi(\tau)), \xi'_1( \pi(\tau))$ are contained in $\bigcap_{i=1}^l \sigma(\rho_i)$.
Hence, we have
\begin{align}
\la m_{\rho_i}, n(\xi_1(\pi(\tau)), \rho) \ra=\la m_{\rho_i} , n(\xi'_1(\pi(\tau)), \rho) \ra
\end{align}
for $1 \leq i \leq l$, and the vector \eqref{eq:phi} is contained in the plane defined by
\begin{align}
\la m_{\rho_i}, n \ra=0,\quad 1 \leq i \leq l.
\end{align}
This is the tangent space of $\pi(\tau)$.
On the other hand, from \pref{pr:monodromy} again, we can see that the vector \eqref{eq:phi} is monodromy invariant with respect to any loop in $U_{\tau}$.
Therefore, the vector \eqref{eq:phi} is a section of $\iota_\ast \scT_\bZ$ over $U_{\tau}$.

For any $1$-simplex $(U_{\tau_0}, U_{\tau_1})$ of $\scU$, one can get
\begin{align}\nonumber
\lb \psi'(x_\rho)-\psi(x_\rho) \rb ((U_{\tau_0}, U_{\tau_1}))
&= \lc n(\xi'_1(\pi(\tau_1)), \rho)-n(\xi'_1(\pi(\tau_0)), \rho) \rc \\ 
& \qquad - \lc n(\xi_1(\pi(\tau_1)), \rho)-n(\xi_1(\pi(\tau_0)), \rho) \rc \\ \nonumber
&= \lc n(\xi'_1(\pi(\tau_1)), \rho)-n(\xi_1(\pi(\tau_1)), \rho) \rc \\
& \qquad - \lc n(\xi'_1(\pi(\tau_0)), \rho)-n(\xi_1(\pi(\tau_0)), \rho) \rc \\
&= (\delta \phi(x_\rho))((U_{\tau_0}, U_{\tau_1})).
\end{align}
Hence, we have $\psi(x_\rho)= \psi'(x_\rho)$ in $\check{H}^1(\scU,\iota_\ast \scT_\bZ)$.

From the definition \eqref{eq:phik}, it is obvious that $\psi(x_{\rho_1} \cdots x_{\rho_k})$ coincides with the element $\bigwedge_{i=1}^k \psi(x_{\rho_i}) \in \check{H}^k(\scU, \bigwedge^k \iota_\ast \scT_\bZ)$.
Via the map $\bigwedge^k \iota_\ast \scT_\bZ \hookrightarrow \iota_\ast \bigwedge^k \scT_\bZ$, the element $\psi(x_{\rho_1} \cdots x_{\rho_k})$ defines an element of $\check{H}^k(\scU, \iota_\ast \bigwedge^k \scT_\bZ)$.
Since $\psi(x_{\rho_i})$ does not depend on the choice of the map $\xi_i: \scP \to \scP(0)$, the element $\psi(x_{\rho_1} \cdots x_{\rho_k})$ also does not depend on the choices of the maps $\xi_i: \scP \to \scP(0), 1 \leq i \leq k$.
\end{proof}

In the following, for a monomial $x_{\rho_1} \cdots x_{\rho_k}$, we choose $k$ maps $\xi_i \colon \scP \to \scP(0), 1 \leq i \leq k$, of \eqref{eq:xi} so that they satisfy the following condition:
\begin{condition}\label{cd:xi1}
For any $i \in \lc 1,\cdots, k\rc$ and any face $\sigma \in \scP$ such that $\sigma \nprec \sigma(\rho_i)$, one has $\xi_i(\sigma) \nin \sigma(\rho_i)$.
\end{condition}

When $\sigma \nprec \sigma(\rho_i)$, there is at least one vertex of $\sigma$ that is not in $\sigma(\rho_i)$.
We can choose such a vertex as $\xi_i(\sigma)$ for each $\sigma \in \scP$ such that $\sigma \nprec \sigma(\rho_i)$.
If we also choose an arbitrary vertex of $\sigma$ as $\xi_i(\sigma)$ for each $\sigma \in \scP$ such that $\sigma \prec \sigma(\rho_i)$, then the map $\xi_i$ satisfies \pref{cd:xi1}.
Hence, choosing maps $\xi_i$ so that they satisfy \pref{cd:xi1} is always possible.

\begin{lemma}\label{lm:2}
For a monomial $x_{\rho_1} \cdots x_{\rho_k}$, choose $k$ maps $\xi_i \colon \scP \to \scP(0), 1 \leq i \leq k$, of \eqref{eq:xi} so that they satisfy \pref{cd:xi1}.
Under such choices of $\xi_i$, if $\psi(x_{\rho_1} \cdots x_{\rho_k})\lb \lb U_{\tau_0}, \cdots, U_{\tau_k} \rb \rb \neq 0$, then one has
\begin{align}
\pi(\tau_i) \prec \bigcap_{j \geq i+1}^k \sigma(\rho_j)
\end{align}
for any $i \in \lc 0, \cdots, k-1 \rc$.
\end{lemma}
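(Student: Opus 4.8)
The plan is to argue by downward induction on $i$, starting from $i = k-1$ and descending. The key structural fact I would use repeatedly is that, under Condition \ref{cd:xi1}, the vertex $\xi_i(\sigma)$ lies in $\sigma(\rho_i)$ if and only if $\sigma \prec \sigma(\rho_i)$. The statement to prove has an obvious "only if" flavor: the cochain value \eqref{eq:phik} is a wedge of $k$ vectors, the $i$-th of which is $n(\xi_i(\pi(\tau_i)), \rho_i) - n(\xi_i(\pi(\tau_{i-1})), \rho_i)$, so nonvanishing of the wedge forces each of these $k$ vectors to be nonzero, and moreover forces them to be linearly independent. I would extract geometric consequences from each of these nonvanishing/independence conditions separately.

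First I would analyze the individual factor. Fix $i$ and suppose the $i$-th vector $n(\xi_i(\pi(\tau_i)), \rho_i) - n(\xi_i(\pi(\tau_{i-1})), \rho_i)$ is nonzero. By the defining equations \eqref{eq:mv}, the vector $n(v, \rho_i)$ depends only on which maximal faces $\sigma(\rho')$ contain $v$, and in particular $n(v, \rho_i) = 0$ whenever $v \notin \sigma(\rho_i)$ — indeed if $v = \bigcap \sigma(\rho_j)$ with none of the $\rho_j$ equal to $\rho_i$, then all the right-hand sides in \eqref{eq:mv} are $0$. Hence nonvanishing of the $i$-th factor forces at least one of $\xi_i(\pi(\tau_i))$, $\xi_i(\pi(\tau_{i-1}))$ to lie in $\sigma(\rho_i)$; combined with Condition \ref{cd:xi1} this says at least one of $\pi(\tau_i) \prec \sigma(\rho_i)$ or $\pi(\tau_{i-1}) \prec \sigma(\rho_i)$ holds. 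Since $\tau_0 \prec \cdots \prec \tau_k$ implies $\pi(\tau_0) \prec \cdots \prec \pi(\tau_k)$, in either case we get $\pi(\tau_{i-1}) \prec \sigma(\rho_i)$. This already gives, for every $i \in \{1,\dots,k\}$, that $\pi(\tau_{i-1}) \prec \sigma(\rho_i)$.

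Now I would promote this to the full intersection claim by downward induction, using linear independence of the wedge factors together with the nesting $\pi(\tau_{i}) \prec \pi(\tau_{i+1}) \prec \cdots$. For the base case $i = k-1$: we have just shown $\pi(\tau_{k-1}) \prec \sigma(\rho_k)$, which is the claim $\pi(\tau_{k-1}) \prec \bigcap_{j \geq k}\sigma(\rho_j)$. For the inductive step, assume $\pi(\tau_i) \prec \bigcap_{j\geq i+1}^k \sigma(\rho_j)$; I want $\pi(\tau_{i-1}) \prec \bigcap_{j \geq i}^k \sigma(\rho_j)$. Since $\pi(\tau_{i-1}) \prec \pi(\tau_i)$, the inductive hypothesis already gives $\pi(\tau_{i-1}) \prec \bigcap_{j\geq i+1}^k\sigma(\rho_j)$, and the previous paragraph gives $\pi(\tau_{i-1}) \prec \sigma(\rho_i)$; intersecting yields the claim. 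The point where I expect the real work — and the main obstacle — is making sure the "at least one of two vertices lies in $\sigma(\rho_i)$" dichotomy truly collapses to the nesting statement I want, i.e., that Condition \ref{cd:xi1} plus the monodromy description of \pref{pr:monodromy} genuinely force $\xi_i(\pi(\tau_{i-1})) \in \sigma(\rho_i) \iff \pi(\tau_{i-1}) \prec \sigma(\rho_i)$, and that the wedge really does vanish as soon as two of its factors are proportional (which requires tracking that these tangent vectors live in a $d$-dimensional stalk and that proportionality of consecutive factors — or a linear dependence among all of them — is what \eqref{eq:phik} being nonzero rules out). I would also need to double-check the edge case where $\pi(\tau_{i-1}) = \pi(\tau_i)$, so that one of the two vertices in the $i$-th difference need not a priori lie in $\sigma(\rho_i)$; here I would invoke that $\xi_i$ is a single fixed map, so $\xi_i(\pi(\tau_{i-1})) = \xi_i(\pi(\tau_i))$ and the $i$-th factor is then literally zero, contradicting nonvanishing — so in fact $\pi(\tau_{i-1}) \neq \pi(\tau_i)$ whenever the cochain value is nonzero, which is a useful byproduct.
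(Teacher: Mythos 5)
Your proposal is correct and follows essentially the same route as the paper: extract from each nonvanishing wedge factor (via Condition~\ref{cd:xi1} and \eqref{eq:mv}) that $\pi(\tau_{i-1}) \prec \sigma(\rho_i)$, then assemble the nested-intersection claim by downward induction using $\pi(\tau_{i-1}) \prec \pi(\tau_i)$. The only superficial difference is organizational — you pull out the statement ``$\pi(\tau_{i-1}) \prec \sigma(\rho_i)$ for every $i$'' once before the induction, whereas the paper rederives it in each inductive step via the same two-case analysis on whether $\pi(\tau_{i_0}) \prec \sigma(\rho_{i_0})$; your observation that linear independence of the wedge factors is also forced is true but not needed (both proofs only use nonvanishing of each factor).
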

\begin{proof}
We prove this by induction on $i$.
We first show it for $i=k-1$, i.e., $\pi(\tau_{k-1}) \prec \sigma(\rho_k)$.
From the assumption $\psi(x_{\rho_1} \cdots x_{\rho_k})\lb \lb U_{\tau_0}, \cdots, U_{\tau_k} \rb \rb \neq 0$, we get
\begin{align}
n \lb \xi_k \lb \pi(\tau_k) \rb, \rho_k \rb - n \lb \xi_k \lb \pi(\tau_{k-1}) \rb, \rho_k \rb \neq 0.
\end{align}
If $\pi(\tau_{k}) \nprec \sigma(\rho_k)$, then we have $\xi_k \lb \pi(\tau_k) \rb \nin \sigma(\rho_k)$ from \pref{cd:xi1}.
We can see from \eqref{eq:mv} that $n \lb \xi_k \lb \pi(\tau_k) \rb, \rho_k \rb$ is equal to $0$.
Hence, we have $n \lb \xi_k \lb \pi(\tau_{k-1}) \rb, \rho_k \rb \neq 0$.
From \pref{cd:xi1} and \eqref{eq:mv} again, we get $\pi(\tau_{k-1}) \prec \sigma(\rho_k)$.
If $\pi(\tau_{k}) \prec \sigma(\rho_k)$, the relation $\pi(\tau_{k-1}) \prec \sigma(\rho_k)$ is obvious since $\pi(\tau_{k-1}) \prec \pi(\tau_{k})$.

Next, we show that for any $i_0 \in \lc 1, \cdots, k-1\rc$, if the statement holds for $i=i_0$, then it also holds for  $i=i_0-1$.
From the assumption $\psi(x_{\rho_1} \cdots x_{\rho_k})\lb \lb U_{\tau_0}, \cdots, U_{\tau_k} \rb \rb \neq 0$, we get
\begin{align}
n \lb \xi_{i_0} \lb \pi(\tau_{i_0}) \rb, \rho_{i_0} \rb - n \lb \xi_{i_0} \lb \pi(\tau_{i_0-1}) \rb, \rho_{i_0} \rb \neq 0.
\end{align}
If $\pi(\tau_{i_0}) \nprec \sigma(\rho_{i_0})$, then we have $\xi_{i_0} \lb \pi(\tau_{i_0}) \rb \nin \sigma(\rho_{i_0})$ from \pref{cd:xi1}.
We can see from \eqref{eq:mv} that $n \lb \xi_{i_0} \lb \pi(\tau_{i_0}) \rb, \rho_{i_0} \rb$ is equal to $0$.
Hence, we have $n \lb \xi_{i_0} \lb \pi(\tau_{i_0-1}) \rb, \rho_{i_0} \rb \neq 0$.
From \pref{cd:xi1} and \eqref{eq:mv} again, we get $\pi(\tau_{i_0-1}) \prec \sigma(\rho_{i_0})$.
On the other hand, by the induction hypothesis, we have
\begin{align}\label{eq:ind}
\pi(\tau_{i_0-1}) \prec \pi(\tau_{i_0}) \prec \bigcap_{j \geq i_0+1}^k \sigma(\rho_j).
\end{align}
Therefore, we get $\pi(\tau_{i_0-1}) \prec \bigcap_{j \geq i_0}^k \sigma(\rho_j)$.
If $\pi(\tau_{i_0}) \prec \sigma(\rho_{i_0})$, then we have $\pi(\tau_{i_0-1}) \prec \pi(\tau_{i_0}) \prec \sigma(\rho_{i_0})$.
By combining this with \eqref{eq:ind}, we get $\pi(\tau_{i_0-1}) \prec \bigcap_{j \geq i_0}^k \sigma(\rho_j)$ also in this case.
Hence, the statement holds also for $i=i_0-1$.
\end{proof}

\begin{lemma}\label{lm:3}
If $\psi(x_{\rho_1} \cdots x_{\rho_k}) \neq 0$, then we have $\bigcap_{j = 1}^k \sigma(\rho_j) \neq \emptyset$. 
\end{lemma}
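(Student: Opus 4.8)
The plan is to obtain this as an essentially immediate consequence of \pref{lm:2}, which already does all the combinatorial work; what remains is a short bookkeeping step. First I would use the freedom granted by \pref{lm:1}: fix, for the monomial $x_{\rho_1}\cdots x_{\rho_k}$, a choice of the maps $\xi_i\colon \scP \to \scP(0)$, $1 \leq i \leq k$, satisfying \pref{cd:xi1}, and compute $\psi(x_{\rho_1}\cdots x_{\rho_k})$ with respect to this choice --- legitimate since the class is independent of it.

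Next I would argue that a nonzero cohomology class has a nonzero representative cocycle, so there is at least one $k$-simplex $(U_{\tau_0},\dots,U_{\tau_k})$ of $\scU$ on which the value is nonzero; because the $U_\tau$ are open stars of barycenters of the cells of $\tilde{\scP}$, such a simplex is automatically a chain, so that after reindexing we may assume $\tau_0 \prec \cdots \prec \tau_k$, which is exactly the situation in which the formula \eqref{eq:phik} and \pref{lm:2} apply. Then I would invoke \pref{lm:2} with $i = 0$, which gives $\pi(\tau_0) \prec \bigcap_{j=1}^k \sigma(\rho_j)$. Since $\tau_0$ is a nonempty cell of $\tilde{\scP}$, its minimal containing face $\pi(\tau_0)$ in $\scP$ is a nonempty face, and therefore $\bigcap_{j=1}^k \sigma(\rho_j)$, which contains it, is nonempty --- this is the claim.

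I do not anticipate a substantial obstacle here: the heavy lifting is entirely in \pref{lm:2}, and the present statement is a one-line corollary. The only points requiring a moment's attention are that \pref{cd:xi1} can indeed be arranged simultaneously for $\xi_1,\dots,\xi_k$ (this was already used in \pref{lm:2}), and the passage from ``nonzero cohomology class'' to ``nonzero cocycle value on a chain simplex,'' where one uses that the chain simplices of the nerve of $\scU$ already support the \v{C}ech cochain complex in the relevant degrees, so a cocycle vanishing on all chains vanishes identically.
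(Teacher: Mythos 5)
Your proposal is correct and takes essentially the same approach as the paper: fix $\xi_1,\dots,\xi_k$ satisfying Condition~\ref{cd:xi1}, observe that a nonzero class forces a $k$-simplex $(U_{\tau_0},\dots,U_{\tau_k})$ on which the representing cocycle is nonzero, and invoke Lemma~\ref{lm:2} with $i=0$ to conclude $\pi(\tau_0)\prec\bigcap_{j=1}^k\sigma(\rho_j)\neq\emptyset$. The extra remarks you supply (that simplices of the nerve of $\scU$ are automatically chains $\tau_0\prec\cdots\prec\tau_k$, and the passage from nonzero class to nonzero cocycle value) are fine and only make explicit what the paper leaves implicit.
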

\begin{proof}
Choose $k$ maps $\xi_i \colon \scP \to \scP(0), 1 \leq i \leq k$, of \eqref{eq:xi} so that they satisfy \pref{cd:xi1}.
When $\psi(x_{\rho_1} \cdots x_{\rho_k}) \neq 0$, there exists a $k$-simplex $\lb U_{\tau_0}, \cdots, U_{\tau_k} \rb$ of $\scU$ such that 
\begin{align}
\psi(x_{\rho_1} \cdots x_{\rho_k})\lb \lb U_{\tau_0}, \cdots, U_{\tau_k} \rb \rb \neq 0.
\end{align}
From \pref{lm:2} for $i=0$, i.e., $\pi(\tau_0) \prec \bigcap_{j =1}^k \sigma(\rho_j)$, we get $\bigcap_{j = 1}^k \sigma(\rho_j) \neq \emptyset$.
\end{proof}

\begin{lemma}\label{lm:4}
The kernel of the map $\psi$ contains $I+J$.
\end{lemma}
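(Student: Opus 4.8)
The plan is to verify the two inclusions $I \subseteq \ker \psi$ and $J \subseteq \ker \psi$ separately. Since $\psi$ is a graded ring homomorphism by \pref{lm:1}, it suffices to show that $\psi$ annihilates the chosen generators of each ideal; then $I + J \subseteq \ker \psi$ follows by additivity of $\psi$.

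For $I$, its generators are the monomials $x_{\rho_1} \cdots x_{\rho_k}$ for which $\rho_1, \cdots, \rho_k$ do not span a cone of $\Sigmav$. By the duality between the polyhedral subdivision $\scP$ of $B$ and the fan $\Sigmav$ (equivalently, straight from the defining inequalities of the cells $\sigma(\rho)$), a point of $\bigcap_{j=1}^k \sigma(\rho_j)$ is an $n \in N_\bR$ with $\check{h}(m_{\rho_j}) + \la m_{\rho_j}, n \ra = 0$ for $1 \leq j \leq k$ and $\check{h}(m_{\rho'}) + \la m_{\rho'}, n \ra \geq 0$ for every $\rho' \in \Sigmav(1)$, and by the strict convexity of $\check{h}$ on $\Sigmav$ such an $n$ exists if and only if $\rho_1, \cdots, \rho_k$ are the rays of a common cone of $\Sigmav$. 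Thus a generator of $I$ satisfies $\bigcap_{j=1}^k \sigma(\rho_j) = \emptyset$, and the contrapositive of \pref{lm:3} yields $\psi(x_{\rho_1} \cdots x_{\rho_k}) = 0$; hence $\psi(I) = 0$.

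For $J$, fix $n \in N$ and evaluate the cocycle representing $\psi \lb \sum_{\rho \in \Sigmav(1)} \la m_\rho, n \ra x_\rho \rb = \sum_{\rho \in \Sigmav(1)} \la m_\rho, n \ra \psi(x_\rho)$ on an arbitrary $1$-simplex $(U_{\tau_0}, U_{\tau_1})$ of $\scU$, using one and the same map $\xi \colon \scP \to \scP(0)$ (with $\xi(\sigma)$ a vertex of $\sigma$) for every $x_\rho$; this is legitimate by \pref{lm:1}. The only computation needed is that, for every vertex $w \in \scP(0)$,
\begin{align}
\sum_{\rho \in \Sigmav(1)} \la m_\rho, n \ra\, n(w, \rho) = -n.
\end{align}
This follows from \eqref{eq:mv}: writing $w = \bigcap_{i=1}^{d+1} \sigma(\rho_i)$, unimodularity of $\Sigmav$ makes $m_{\rho_1}, \cdots, m_{\rho_{d+1}}$ a $\bZ$-basis of $M$, so $n(w, \rho) = 0$ unless $\rho \in \lc \rho_1, \cdots, \rho_{d+1} \rc$, while $\lc -n(w, \rho_i) \rc_{i=1}^{d+1}$ is the dual basis of $\lc m_{\rho_i} \rc_{i=1}^{d+1}$, whence $\sum_{i=1}^{d+1} \la m_{\rho_i}, n \ra\, n(w, \rho_i) = -n$. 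Applying this identity at $w = \xi(\pi(\tau_1))$ and at $w = \xi(\pi(\tau_0))$ shows that the value of the cocycle on $(U_{\tau_0}, U_{\tau_1})$ is $(-n) - (-n) = 0$; since $(U_{\tau_0}, U_{\tau_1})$ was arbitrary, the cocycle vanishes identically, so $\psi \lb \sum_{\rho} \la m_\rho, n \ra x_\rho \rb = 0$ and $\psi(J) = 0$. Combining the two cases gives $I + J \subseteq \ker \psi$. I do not expect a genuine obstacle: the $I$ part is essentially \pref{lm:3} together with the standard duality between $\scP$ and $\Sigmav$, and the $J$ part reduces to the displayed linear-algebra identity, which is the single point I would write out carefully.
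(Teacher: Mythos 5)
Your proof is correct and follows essentially the same strategy as the paper's: reduce $I\subseteq\ker\psi$ to \pref{lm:3} via $\bigcap_j\sigma(\rho_j)=\emptyset$, and reduce $J\subseteq\ker\psi$ to the pointwise identity $\sum_\rho\la m_\rho,n\ra\,n(w,\rho)=-n$, which you verify via \eqref{eq:mv} and unimodularity of $\Sigmav$ exactly as the paper does (your ``dual basis'' phrasing is an equivalent reformulation of the paper's ``pair both sides with $m_{\rho_i}$''). The only difference is that you spell out the duality between $\scP$ and $\Sigmav$ that the paper leaves implicit when asserting the intersection is empty.
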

\begin{proof}
First, we show $I \subset \Ker(\psi)$.
For any cones $\rho_1, \cdots, \rho_k \in \Sigmav(1)$ such that the convex hull of $\rho_1, \cdots, \rho_k$ is not in $\Sigmav$, the set $\bigcap_{i=1}^{k} \sigma(\rho_i)$ is empty.
From \pref{lm:3}, we have $\psi \lb x_{\rho_1}, \cdots, x_{\rho_k} \rb = 0$.
Hence, we can see $I \subset \Ker(\psi)$.

Next, we show $J \subset \Ker(\psi)$.
For any $n \in N$, we have 
\begin{align}\label{eq:pdiv}
\psi \lb \sum_{\rho \in \Sigmav(1)} \la m_\rho, n \ra x_\rho \rb \lb \lb U_{\tau_0}, U_{\tau_1}\rb \rb=\sum_{\rho \in \Sigmav(1)} \la m_\rho, n \ra \lc n \lb \xi_1 \lb \pi(\tau_{1}) \rb, \rho \rb - n \lb \xi_1 \lb \pi(\tau_{0}) \rb, \rho \rb \rc.
\end{align}
On the other hand, for any vertex $v \in \scP(0)$, we have
\begin{align}\label{eq:div}
\sum_{\rho \in \Sigmav(1)} \la m_\rho, n \ra n \lb v, \rho \rb=-n.
\end{align}
This can be checked as follows:
Let $\lc \rho_i \rc_{i=1, \cdots, d+1}$ be the set of cones in $\Sigmav(1)$ such that $v=\bigcap_{i=1}^{d+1} \sigma(\rho_i)$.
Since the fan $\Sigmav$ is unimodular, the primitive generators $m_{\rho_1}, \cdots, m_{\rho_{d+1}}$ form a basis of the lattice $M$.
For any $i \in \lc 1, \cdots, d+1 \rc$, the pairings with $m_{\rho_i}$ of the both hand sides of \eqref{eq:div} are equal to $-\la m_{\rho_i},  n \ra$.
Therefore, the equation \eqref{eq:div} holds.
From \eqref{eq:div}, it turns out that \eqref{eq:pdiv} is $0$ for any $1$-simplex $\lb U_{\tau_0}, U_{\tau_1}\rb$ of $\scU$.
Therefore, we have $\psi \lb \sum_{\rho \in \Sigmav(1)} \la m_\rho, n \ra x_\rho \rb=0$ for any $n \in N$, and $J \subset \Ker(\psi)$.
\end{proof}

From \eqref{eq:tcoh} and \pref{lm:4}, we can see that the map $\psi$ \eqref{eq:psi} descends to the map from $H^\bullet \lb X_\Sigmav, \bZ \rb$.
This map will also be denoted by $\psi$,
\begin{align}\label{eq:psi2}
\psi \colon H^\bullet \lb X_\Sigmav, \bZ \rb \to \bigoplus_{i=0}^d H^i \lb B, \iota_\ast \bigwedge^i \scT_\bZ \rb.
\end{align}
The cohomology group $H^1(B, \iota_\ast \scT_\bZ)$ has the $d$-point function induced by the wedge product
\begin{align}
\bigwedge \colon H^1 \lb B, \iota_\ast \scT_\bZ \rb^{\otimes^d} \to H^d \lb B, \iota_\ast \bigwedge^d \scT_\bZ \rb \cong H^d \lb B, \bZ \rb \cong \bZ.
\end{align}
Here we use the isomorphism $\iota_\ast \bigwedge^d \scT_\bZ \cong \bZ$ of \pref{cr:sct}.
Choosing $\iota_\ast \bigwedge^d \scT_\bZ \cong \bZ$ amounts to choosing an orientation of $B$.
Furthermore, we need to choose an orientation of $B$ again in order to determine $H^d(B, \bZ) \cong \bZ$.
Here we choose the same orientation as we did for $\iota_\ast \bigwedge^d \scT_\bZ \cong \bZ$.
Then the isomorphism $H^d \lb B, \iota_\ast \bigwedge^d \scT_\bZ \rb \cong \bZ$ is independent of the orientation that we first choose for $\iota_\ast \bigwedge^d \scT_\bZ \cong \bZ$.
Let further $Y$ be an anti-canonical hypersurface of the complex toric variety $X_\Sigmav$.
For each $1$-dimensional cone $\rho \in \Sigmav(1)$, let $D_\rho$ denote the toric divisor on $X_\Sigmav$ corresponding to $\rho$.

\begin{lemma}\label{lm:5}
For any cones $\lc \rho_i \rc_{i=1}^d \subset \Sigmav(1)$, we have 
\begin{align}\label{eq:aim}
Y \cdot D_{\rho_1} \cdots D_{\rho_d} = \psi(x_{\rho_1}) \wedge \cdots \wedge \psi(x_{\rho_d}),
\end{align}
where the left hand side denotes the value in $H^{2(d+1)} \lb X_\Sigmav, \bZ \rb \cong \bZ$ of the cup product of the cohomology classes of $Y, D_{\rho_1}, \cdots, D_{\rho_d}$.
\end{lemma}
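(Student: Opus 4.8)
The plan is to reduce \eqref{eq:aim} to a statement about a spanning set of $H^{2d}\lb X_\Sigmav, \bZ \rb$ and then to evaluate an explicit \v{C}ech cocycle localized near a single edge of $\scP$. First I would observe that both sides of \eqref{eq:aim} are $\bZ$-linear functions of the class $D_{\rho_1} \cdots D_{\rho_d} \in H^{2d}\lb X_\Sigmav, \bZ \rb$: the left hand side is $\alpha \mapsto \la [Y] \cup \alpha, [X_\Sigmav] \ra$, and the right hand side factors through $H^{2d}\lb X_\Sigmav, \bZ \rb$ by \pref{lm:4} and \eqref{eq:tcoh}, after composing with the isomorphism $H^d \lb B, \iota_\ast \bigwedge^d \scT_\bZ \rb \cong \bZ$ of \pref{cr:sct}. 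Since $X_\Sigmav$ is a smooth complete toric variety, $H^{2d}\lb X_\Sigmav, \bZ \rb$ is generated by the Poincar\'e duals $D_{\rho_1} \cdots D_{\rho_d}$ of the torus-invariant curves $V(\check{\sigma}_0)$, where $\check{\sigma}_0$ ranges over the codimension-one cones of $\Sigmav$ with rays $\rho_1, \dots, \rho_d$. (When the distinct rays among $\rho_1, \dots, \rho_d$ do not span a cone of $\Sigmav$, the class is zero and both sides vanish, the right hand side because $I \subset \Ker(\psi)$.) Hence it suffices to prove \eqref{eq:aim} when $\rho_1, \dots, \rho_d$ are the distinct rays of such a $\check{\sigma}_0$.

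For the left hand side in this case, I would let $\check{\sigma}^{\pm} = \check{\sigma}_0 + \rho^{\pm}$ be the two maximal cones of $\Sigmav$ containing $\check{\sigma}_0$ and let $m_{\rho^+} + m_{\rho^-} + \sum_{i=1}^d b_i m_{\rho_i} = 0$ be the unique wall-crossing relation. Using $[Y] = \sum_{\rho \in \Sigmav(1)} D_\rho$ together with the toric self-intersection formula (so that $D_{\rho^{\pm}} \cdot V(\check{\sigma}_0) = 1$, $D_{\rho_i} \cdot V(\check{\sigma}_0) = b_i$, and $D_\rho \cdot V(\check{\sigma}_0) = 0$ for the remaining rays), one obtains $Y \cdot D_{\rho_1} \cdots D_{\rho_d} = 2 + \sum_{i=1}^d b_i$.

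For the right hand side I would choose the maps $\xi_i$ so that \pref{cd:xi1} holds and evaluate the \v{C}ech cocycle $\psi(x_{\rho_1} \cdots x_{\rho_d})$ against the fundamental class $[B] \in H_d(B, \bZ)$, represented as the signed sum over the top flags $\tau_0 \prec \cdots \prec \tau_d$ of $\mathrm{Bar}(\tilde{\scP})$. By \pref{lm:2} and \pref{lm:3} a flag contributes only when $\pi(\tau_0)$ is a face of the edge $e := \sigma(\rho_1) \cap \cdots \cap \sigma(\rho_d)$, whose two vertices $v^{+}, v^{-}$ correspond to $\check{\sigma}^{+}, \check{\sigma}^{-}$; thus the sum is concentrated in a neighborhood of $e$. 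Sorting the flags by the position of $\pi(\tau_0)$ relative to $e$ and computing the associated elements of $\iota_\ast \bigwedge^d \scT_\bZ$ from the defining equations \eqref{eq:mv}, using \pref{pr:monodromy} to trivialize $\iota_\ast \bigwedge^d \scT_\bZ \cong \bZ$ compatibly with the fixed orientation of $B$, I expect the vertices $v^{+}$ and $v^{-}$ to contribute $1$ each and the flags supported in the relative interior of $e$ to contribute $\sum_{i=1}^d b_i$ (the coefficients $b_i$ entering through the parallel transport of \pref{pr:monodromy} along loops crossing $e$), for a total of $2 + \sum_{i=1}^d b_i$, which matches the left hand side.

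The hard part will be this last computation: enumerating the top flags of $\mathrm{Bar}(\tilde{\scP})$ that meet a neighborhood of $e$, reading off the wedges of vectors from \eqref{eq:mv}, and pinning down all the signs under the identification $\iota_\ast \bigwedge^d \scT_\bZ \cong \bZ$ of \pref{cr:sct} (equivalently, the chosen orientation of $B$). This requires a careful local description of how the polyhedral structures $\scP$ and $\tilde{\scP}$, the affine charts, and the monodromy of \pref{sc:construction} behave in a neighborhood of the edge $e$, and it is where the wall-crossing coefficients $b_i$ on the $X_\Sigmav$-side must be seen to emerge from the integral affine geometry of $B$.
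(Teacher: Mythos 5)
Your two reduction steps are both correct and, in fact, somewhat slicker than the paper's route. First, the observation that both sides of \eqref{eq:aim} factor through the class $D_{\rho_1}\cdots D_{\rho_d}\in H^{2d}(X_\Sigmav,\bZ)$ — using \pref{lm:4} and \eqref{eq:tcoh} for the right hand side — lets you restrict at once to products over codimension-one cones of $\Sigmav$. The paper instead proves the distinct-ray case first and only afterwards handles repeated rays by a separate induction on the number of coincident pairs, replacing one repeated divisor by a linear combination of others via $\mathrm{div}(\chi^n)$; your linearity argument subsumes that induction. Second, your computation of the left hand side via the wall-crossing relation $m_{\rho^+}+m_{\rho^-}+\sum b_i m_{\rho_i}=0$ and the identity $[Y]=\sum_\rho D_\rho$ gives $2+\sum b_i$ directly, whereas the paper reaches the equivalent value $2-\sum s_i$ (with $b_i=-s_i$) by interpreting the intersection number as the integral length of a dual face of $\Deltav$. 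Both are valid; yours is the more standard toric argument.

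The genuine gap is the Čech cohomology computation of $\psi(x_{\rho_1})\wedge\cdots\wedge\psi(x_{\rho_d})$ near the edge $e$, which is the actual content of the lemma and which you explicitly defer. Two concrete points you would have to supply. (a) \pref{cd:xi1} alone is not enough to make the flag sum manageable; the paper imposes an additional constraint (\pref{cd:xi2}) pinning $\xi_i(\sigma)$ to $v_1$ when $v_1\prec\sigma$ and to $v_0$ when $v_0\prec\sigma\not\succ v_1$, and without something of this kind the enumeration of contributing flags becomes ambiguous. (b) Your intended decomposition — ``sorting flags by the position of $\pi(\tau_0)$ relative to $e$,'' with some flags ``supported in the relative interior of $e$'' — does not match how the sum actually localizes: for every contributing flag, $\pi(\tau_0)$ is forced (by \pref{lm:2} plus a dimension count, cf.\ \eqref{eq:face1} and \eqref{eq:face2}) to be one of the two \emph{vertices} $v_0,v_1$, never the interior of $e$; the correct classification is by which facet $\pi(\tau_d)\in\{\sigma(\rho_0),\dots,\sigma(\rho_{d+1})\}$ occurs, with the two boundary facets contributing $1$ each and $\sigma(\rho_{i_0})$ contributing $-s_{i_0}=b_{i_0}$. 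Until this enumeration and the attendant sign bookkeeping under $\iota_\ast\bigwedge^d\scT_\bZ\cong\bZ$ are carried out, the argument is a plausible outline rather than a proof.
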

\begin{proof}
First, we show this for cones $\lc \rho_i \rc_{i=1}^d \subset \Sigmav(1)$ such that $\rho_i \neq \rho_j$ for any $i \neq j$ by computing the both sides of \eqref{eq:aim} explicitly.
We start with the right hand side.
From \pref{lm:4}, it turns out to be $0$ when the convex hull of $\lc \rho_i \rc_{i=1}^d$ is not in $\Sigmav$.
Therefore, we assume that the convex hull of $\lc \rho_i \rc_{i=1}^d$ is in $\Sigmav$ in the following.
In this case, the set $\bigcap_{j \geq 1}^d \sigma(\rho_j)$ is a $1$-dimensional face of $B$.
Let $v_0, v_1$ denote its vertices.
Let further $\sigma(\rho_0)$ be the facet of $B$ which contains $v_0$ and is different from $\sigma(\rho_i)\ 1 \leq i \leq d$, and $\sigma(\rho_{d+1})$ be the facet of $B$ which contains $v_{1}$ and is different from $\sigma(\rho_i)\ 1 \leq i \leq d$.
Since the fan $\Sigmav$ is unimodular, such facets $\sigma(\rho_0), \sigma(\rho_{d+1})$ uniquely exist.
We define $e_i \in N\ (0 \leq i \leq d)$ and $e_i' \in N\ (1 \leq i \leq d+1)$ by 
\begin{align}\label{eq:e1}
\la m_{\rho_j}, e_i \ra = \delta_{i, j}\ (0 \leq j \leq d), \quad  \la m_{\rho_j}, e_i' \ra = \delta_{i, j}\ (1 \leq j \leq d+1),
\end{align}
respectively.

For the monomial $x_{\rho_1} \cdots x_{\rho_d}$, we choose $d$ maps $\xi_i \colon \scP \to \scP(0), 1 \leq i \leq d$, of \eqref{eq:xi} so that they satisfy \pref{cd:xi1}, and in addition,

\begin{condition}\label{cd:xi2}
For any $i \in \lc 1, \cdots ,d \rc$ and any face $\sigma \in \scP$ such that $\sigma \prec \sigma(\rho_i)$, one has 
\begin{itemize}
\item $\xi_i(\sigma) = v_1$ if $v_1 \prec \sigma$,
\item $\xi_i(\sigma) = v_0$ if $v_1 \nprec \sigma$ and $v_0 \prec \sigma$.
 \end{itemize}
\end{condition}
Under such choices of maps $\xi_i$, we compute the sum of
\begin{align}\nonumber
\psi(D_{\rho_1}) \wedge \cdots \wedge \psi(D_{\rho_d})((U_{\tau_0}, \cdots, U_{\tau_d}))
&=\psi(D_{\rho_1})((U_{\tau_0}, U_{\tau_1})) \wedge \cdots \wedge \psi(D_{\rho_d})((U_{\tau_{d-1}}, U_{\tau_d}))\\ \label{eq:term}
&=\bigwedge_{i=1}^d \lc n \lb \xi_i \lb \pi(\tau_i) \rb, \rho_i \rb - n \lb \xi_i \lb \pi(\tau_{i-1}) \rb, \rho_i \rb \rc.
\end{align}
We care only about $d$-simplices $(U_{\tau_0}, \cdots, U_{\tau_d})$ such that $\psi(D_{\rho_1}) \wedge \cdots \wedge \psi(D_{\rho_d})((U_{\tau_0}, \cdots, U_{\tau_d})) \neq 0$.
By \pref{lm:2}, we know $\pi(\tau_0) \prec \bigcap_{j \geq 1}^d \sigma(\rho_j)$.
Hence, the face $\pi(\tau_0)$ contains either $v_0$ or $v_1$.
Since $\pi(\tau_0) \prec \pi(\tau_i)$, the face $\pi(\tau_i)$ also contains either $v_0$ or $v_1$ $(1 \leq i \leq d)$.
Therefore, we can see that $\pi(\tau_d)$ should be one of the facets $\sigma(\rho_i),\ 0 \leq i \leq d+1$.
Note that $\pi(\tau_d)$ is a facet of $B$, since the dimension of $\tau_d$ is $d$.

First, consider the case where $\pi(\tau_d)=\sigma(\rho_0)$ or $\pi(\tau_d)=\sigma(\rho_{d+1})$.
For any $i$, we have $\pi(\tau_i) \prec \pi(\tau_d)$, and $\pi(\tau_i) \prec \pi(\tau_d) \cap \lb \bigcap_{j \geq i+1}^d \sigma(\rho_j) \rb$ by \pref{lm:2}.
Since the dimension of $\pi(\tau_i)$ is greater than or equal to $i$, and the dimension of $\pi(\tau_d) \cap \lb \bigcap_{j \geq i+1}^d \sigma(\rho_j) \rb$ is $i$, we get 
\begin{align}\label{eq:face1}
	\pi(\tau_i) = \pi(\tau_d) \cap \lb \bigcap_{j \geq i+1}^d \sigma(\rho_j) \rb.
\end{align}
From this, we can see $\pi(\tau_i) \nprec \sigma(\rho_i)$.
Hence, we get $n(\xi_i \lb \pi(\tau_i) \rb, \rho_i) = 0$.
On the other hand, we have $\pi(\tau_{i-1}) \prec \sigma(\rho_i)$ by \pref{lm:2}.
Therefore, when $\pi(\tau_d)=\sigma(\rho_0)$, we have $\pi(\tau_{i-1})  \nsucc v_1$ and $\pi(\tau_{i-1}) \succ v_0$, and \eqref{eq:term} is equal to
\begin{align}\label{eq:term1}
\bigwedge_{i=1}^d - n \lb \xi_i \lb \pi(\tau_{i-1}) \rb, \rho_i \rb = \bigwedge_{i=1}^d -n(v_0, \rho_i)= \bigwedge_{i=1}^d e_i.
\end{align}
When $\pi(\tau_d)=\sigma(\rho_{d+1})$, we have $\pi(\tau_{i-1}) \succ v_1$, and \eqref{eq:term} is equal to
\begin{align}\label{eq:term2}
\bigwedge_{i=1}^d - n \lb \xi_i \lb \pi(\tau_{i-1}) \rb, \rho_i \rb = \bigwedge_{i=1}^d -n(v_1, \rho_i)= \bigwedge_{i=1}^d e_i',
\end{align}
where $e_i, e_i' \in N$ are the vectors defined in \eqref{eq:e1}.

Next, we consider the case where $\pi(\tau_d)=\sigma(\rho_{i_0})$ for some $i_0 \in \lc 1, \cdots, d \rc$.
We first show
\begin{align}\label{eq:face2}
\pi(\tau_i)=
\lc
\begin{array}{ll}
\sigma(\rho_{i_0}) \cap \lb \bigcap_{j \geq i+1}^d \sigma(\rho_j) \rb & i_0 \leq i \leq d, \\
\sigma(\rho_0) \cap \lb \bigcap_{j \geq i+1}^d \sigma(\rho_j) \rb & 0 \leq i \leq i_0-1. \\
\end{array}
\right.
\end{align}
For $i \geq i_0$, this can be shown by \pref{lm:2} and comparing the dimensions as we did in order to see \eqref{eq:face1}.
For $i \leq i_0-1$, we can check this as follows:
Since \eqref{eq:term} is not zero and $v_1 \prec \pi(\tau_{i_0})$, we have
\begin{align}
n(\xi_{i_0} \lb \pi(\tau_{i_0}) \rb, \rho_{i_0}) - n(\xi_{i_0} \lb \pi(\tau_{i_0-1}) \rb, \rho_{i_0}) = n(v_1, \rho_{i_0}) - n(\xi_{i_0} \lb  \pi(\tau_{i_0-1}) \rb, \rho_{i_0}) \neq 0.
\end{align}
From this and $\pi (\tau_{i_0-1}) \prec \pi (\tau_{i_0}) \prec \sigma(\rho_{i_0})$, it turns out that we have to have $\xi_{i_0} \lb  \pi(\tau_{i_0-1}) \rb=v_0$.
Hence, we get $v_1 \nprec \pi(\tau_{i_0-1})$.
This happens only when $\pi(\tau_{i_0-1}) \prec \sigma(\rho_0)$.
Therefore we have
\begin{align}
\pi (\tau_{i_0-1}) \prec \sigma(\rho_0) \cap \pi(\tau_{i_0}) \prec \sigma(\rho_0) \cap \lb \bigcap_{j \geq i_0}^d \sigma(\rho_j) \rb.
\end{align}
By comparing the dimensions of the faces, it turns out that \eqref{eq:face2} holds also for $i=i_0-1$.
For $i < i_0-1$, we know $\pi(\tau_i) \prec \pi(\tau_{i_0-1}) \prec \sigma(\rho_0)$.
Hence, from \pref{lm:2}, we have
\begin{align}
\pi(\tau_{i}) \prec \sigma(\rho_0) \cap \bigcap_{j \geq i+1}^d \sigma (\rho_j).
\end{align}
By comparing the dimensions of the faces again, it turns out that \eqref{eq:face2} holds also for $i < i_0-1$.

We now compute \eqref{eq:term} in the case where $\pi(\tau_d)=\sigma(\rho_{i_0})$ for some $i_0 \in \lc 1, \cdots, d \rc$.
By \eqref{eq:face2}, we know $\pi(\tau_i) \nprec \sigma (\rho_i)$ for $i \neq i_0$.
Hence, we have $n \lb \xi_i \lb \pi (\tau_i) \rb, \rho_i \rb=0$ for $i \neq i_0$.
Furthermore, from \eqref{eq:face2} again, we can also know $v_0 \prec \pi(\tau_i)$ and $v_1 \nprec \pi(\tau_i)$ for $i \leq i_0-1$, and $v_1 \prec \pi(\tau_i)$ for $i \geq i_0$.
Therefore, \eqref{eq:term} is equal to
\begin{align}\nonumber
\lb \bigwedge_{i=1}^{i_0-1} -n(v_0, \rho_i) \rb \wedge \lc n(v_1, \rho_{i_0}) - n(v_0, \rho_{i_0}) \rc \wedge \lb \bigwedge_{i=i_0+1}^d -n(v_1, \rho_i) \rb \\ \label{eq:term3}
=\lb \bigwedge_{i=1}^{i_0-1} e_i \rb \wedge \lb e_{i_0} - e_{i_0}' \rb \wedge \lb \bigwedge_{i=i_0+1}^{d} e_i' \rb,
\end{align}
where $e_i, e_i' \in N$ are the vectors defined in \eqref{eq:e1}.
From \eqref{eq:e1}, we can also see that each vector $e_i' \ (1 \leq i \leq d+1)$ can be written as
\begin{align}\label{eq:e2}
e_i'=e_i + s_i e_0 \ (1 \leq i \leq d),\quad e_{d+1}'=s_{d+1}e_{0}, 
\end{align}
where $s_i$ $(1 \leq i \leq d+1)$ are some integers.
Since these elements are primitive and $e_{d+1}' \neq e_{0}$, we have $s_{d+1}=-1$.
When we write the vertices $v_0, v_1$ as $v_0=\mu_0 + \nu_0$ and $v_1=\mu_1 + \nu_1$ , where $(\mu_0, \nu_0), (\mu_1, \nu_1) \in \scR(\tilde{\Sigma})$, we have 
\begin{align}
\mu_0&=\lc n \in N \relmid \la m_{\rho_i}, n \ra = - \varphiv(m_{\rho_i}) \mathrm{\ for\ } 0 \leq i \leq d \rc \\
&= \lc n \in N \relmid \la m_{\rho_i}, n \ra = -1 \mathrm{\ for\ } 0 \leq i \leq d \rc \\
&= \lc - \sum_{i=0}^{d} e_i =: n_{\mu_0} \rc.
\end{align}
Similarly we get $\mu_1=\lc - \sum_{i=1}^{d+1} e_i' =: n_{\mu_1} \rc$.
Hence, we have $\sum_{i=1}^{d+1} e_i=0, \sum_{i=1}^{d+1} e_i'=0$ on the charts of $U_{v_0}, U_{v_1}$ respectively.
Therefore, on $U_{v_0}$, \eqref{eq:term3} is equal to
\begin{align}
\lb \bigwedge_{i=1}^{i_0-1} e_i \rb \wedge \lb -s_{i_0} e_0 \rb \wedge \lb \bigwedge_{i=i_0+1}^{d} e_i+s_i e_0 \rb
&=-s_{i_0} \lb \bigwedge_{i=1}^{i_0-1} e_i \rb \wedge \lb \sum_{i=1}^d -e_i \rb \wedge \lb \bigwedge_{i=i_0+1}^{d} e_i \rb \\
&=s_{i_0} \bigwedge_{i=1}^{d} e_i. \label{eq:term4}
\end{align}

In either case, $\pi(\tau_d)=\sigma(\rho_0)$ or $\pi(\tau_d)=\sigma(\rho_{d+1})$ or $\pi(\tau_d)=\sigma(\rho_{i_0})$ $(i_0 \in \lc 1, \cdots, d \rc)$, there uniquely exists a sequence of faces $\tau_0 \prec \cdots \prec \tau_d$ satisfying \eqref{eq:face1} or \eqref{eq:face2}.
By thinking about the orientation of each simplex $(U_{\tau_0}, \cdots, U_{\tau_d})$, it turns out that both \eqref{eq:term1} and \eqref{eq:term2} are $1$, and \eqref{eq:term4} is $-s_{i_0}$ in $H^d \lb B, \iota_\ast \bigwedge^d \scT_\bZ \rb \cong \bZ$.
Hence, in total, we obtain
\begin{align}\label{eq:sum}
\psi(x_{\rho_1}) \wedge \cdots \wedge \psi(x_{\rho_d})=2-\sum_{i=1}^d s_i
\end{align}
in $H^d \lb B, \iota_\ast \bigwedge^d \scT_\bZ \rb \cong \bZ$.

Next, we compute the left hand side of \eqref{eq:aim}.
A hypersurface in $X_\Sigmav$ defined by a polynomial whose Newton polytope is $\Deltav$ is an anti-canonical hypersurface.
When the convex hull of $\lc \rho_i \rc_{i=1}^d$ is not in $\Sigmav$, the intersection $Y \cap D_{\rho_1} \cap \cdots \cap D_{\rho_d}$ is obviously empty.
Hence, we have $Y \cdot D_{\rho_1} \cdots D_{\rho_d}=0$ and \eqref{eq:aim} holds in this case.
When the convex hull of $\lc \rho_i \rc_{i=1}^d$ is in $\Sigmav$ and $\rho_i \neq \rho_j$ for any $i \neq j$, the convex hull $\mathrm{Conv}(m_{\rho_1}, \cdots, m_{\rho_d})$ of the $d$ points $m_{\rho_1}, \cdots, m_{\rho_d}$ is a $(d-1)$-dimensional standard simplex on the boundary of $\Delta$.
When we restrict the polynomial defining $Y$ to $D_{\rho_i}$, all monomials except the monomials corresponding to $n \in \Deltav$ such that $\la m_{\rho_i}, n \ra=\min_{n' \in \Deltav} \la m_{\rho_i}, n' \ra=-1$ vanish.
When we restrict the polynomial defining $Y$ to $D_{\rho_1} \cap \cdots \cap D_{\rho_d}$, all monomials except the monomials corresponding to $n \in \Deltav$ such that $\la m_{\rho_i}, n \ra=-1$ for any $1 \leq i \leq d$ vanish.
Hence, the intersection $Y \cap D_{\rho_1} \cap \cdots \cap D_{\rho_d}$ is the zero locus in $D_{\rho_1} \cap \cdots \cap D_{\rho_d} \cong \bC P^1$ of a polynomial whose Newton polytope is the face $F$ of $\Deltav$ that is dual to the minimal face of $\Delta$ containing all $m_{\rho_1}, \cdots, m_{\rho_d}$.
The number of intersection points $Y \cap D_{\rho_1} \cap \cdots \cap D_{\rho_d}$ is the integral length of the face $F$.
This coincides with the affine length between $n_{\mu_0}$ and $n_{\mu_1}$.
On the other hand, we also have
\begin{align}\label{eq:nmu}
n_{\mu_1}-n_{\mu_0}= - \sum_{i=1}^{d+1} e_i' +\sum_{i=0}^{d} e_i =\lb 2 -\sum_{i=1}^d s_i \rb e_0.
\end{align}
From \eqref{eq:e1} and \eqref{eq:e2}, we also get
$m_{\rho_{d+1}}= -m_{\rho_0}+\sum_{i=1}^d s_i m_{\rho_i}$.
Since the point $m_{\rho_{d+1}}$ is in the polytope $\Delta$, we also know $\la m_{\rho_{d+1}}, n_{\mu_0}\ra \geq -1$.
The left hand side of this inequality equals
\begin{align}
\la -m_{\rho_0}+\sum_{i=1}^d s_i m_{\rho_i}, - \sum_{i=0}^d e_i \ra=1- \sum_{i=1}^d s_i.
\end{align}
Hence, we obtain $2 -\sum_{i=1}^d s_i \geq 0$.
From this and \eqref{eq:nmu}, it turns out that the affine length between $n_{\mu_0}$ and $n_{\mu_1}$ is $2 -\sum_{i=1}^d s_i$, which is equal to \eqref{eq:sum}.
Hence, \eqref{eq:aim} holds also in this case.

Lastly, we prove \eqref{eq:aim} for cones $\lc \rho_i \rc_{i=1}^d \subset \Sigmav(1)$ which do not necessarily satisfy $\rho_i \neq \rho_j$ for $i \neq j$.
We show it  by induction on the number of pairs $(\rho_i, \rho_j)$ such that $i < j$ and $\rho_i = \rho_j$.
Assume that \eqref{eq:aim} holds if the number of pairs $(\rho_i, \rho_j)$ such that $i < j$ and $\rho_i = \rho_j$ is less than or equal to $k \in \bN$.
We show that \eqref{eq:aim} holds also when it is $k+1$.
Let $\rho_{i_0} \in \Sigmav(1)$ be one of the cones such that there exists $i > i_0$ such that $\rho_i = \rho_{i_0}$.
There exists a primitive element $n \in N$ such that $\mathrm{div}(\chi^{n})= D_{\rho_{i_0}} + \sum_{\rho \notin \lc \rho_0, \cdots, \rho_{d} \rc} a_\rho D_\rho$ $(a_\rho \in \bZ)$.
Hence, we have
\begin{align}\label{eq:ind1}
Y \cdot D_{\rho_1} \cdots D_{\rho_{i_0}} \cdots D_{\rho_d} 
&= - \sum_{\rho \notin \lc \rho_0, \cdots, \rho_{d} \rc} a_\rho Y \cdot D_{\rho_1} \cdots D_{\rho} \cdots D_{\rho_d} \\ \label{eq:ind2}
&= - \sum_{\rho \notin \lc \rho_0, \cdots, \rho_{d} \rc} a_\rho \psi(x_{\rho_1}) \wedge \cdots \wedge \psi(x_{\rho}) \wedge \cdots \wedge \psi(x_{\rho_d}) \\
&= \psi(x_{\rho_1}) \wedge \cdots \wedge \psi \lb - \sum_{\rho \notin \lc \rho_0, \cdots, \rho_{d} \rc} a_\rho x_{\rho} \rb \wedge \cdots \wedge \psi(x_{\rho_d}) \\
&=\psi(x_{\rho_1}) \wedge \cdots \wedge \psi(x_{\rho_{i_0}}) \wedge \cdots \wedge \psi(x_{\rho_d}),
\end{align}
where we used the assumption of induction between \eqref{eq:ind1} and \eqref{eq:ind2}.
\end{proof}

Let $\mathrm{Ann}(\ld Y \rd) \subset H^\bullet(X_\Sigmav, \bZ)$ be the annihilator of the class $\ld Y \rd \in H^2(X_\Sigmav, \bZ)$.
We know
\begin{align}
H^\bullet_\mathrm{amb} \lb Y, \bZ \rb=H^\bullet(X_\Sigmav, \bZ) / \mathrm{Ann}(\ld Y \rd)
\end{align}
(cf.~e.g.~\cite[Proposition 8.1]{MR1265307}).
Let further $T \subset \bigoplus_{i=0}^d H^i \lb B, \iota_\ast \bigwedge^i \scT_\bZ \rb$ be the torsion subgroup.

\begin{lemma}
Concerning the map $\psi \colon H^\bullet \lb X_\Sigmav, \bZ \rb \to \bigoplus_{i=0}^d H^i \lb B, \iota_\ast \bigwedge^i \scT_\bZ \rb$ of \eqref{eq:psi2}, one has $\psi^{-1} \lb T \rb = \mathrm{Ann}(\ld Y \rd)$.
\end{lemma}
\begin{proof}
First, we show $\psi^{-1} \lb T \rb \subset \mathrm{Ann}(\ld Y \rd)$.
For any $x \in \psi^{-1} \lb T \cap H^i \lb B, \iota_\ast \bigwedge^i \scT_\bZ \rb \rb$ and $y \in H^{2(d-i)} \lb X_\Sigmav, \bZ \rb$, we have 
\begin{align}
\psi(x \cdot y) = \psi(x) \wedge \psi(y)=0.
\end{align}
The cohomology ring $H^\bullet \lb X_\Sigmav, \bZ \rb$ is generated by the toric divisors.
Hence, from \pref{lm:5}, we can see 
\begin{align}
\psi(x \cdot y)=Y \cdot x \cdot y
\end{align}
in $\bZ \cong H^d \lb B, \iota_\ast \bigwedge^d \scT_\bZ \rb \cong H^{2(d+1)} \lb X_\Sigmav, \bZ \rb$.
Therefore, we get $Y \cdot x \cdot y=0$ for any $y \in H^{2(d-i)} \lb X_\Sigmav, \bZ \rb$.
By the Poincar\'{e} duality for $X_\Sigmav$, we obtain $Y \cdot x=0$, i.e., $x \in \mathrm{Ann}(\ld Y \rd)$.
Hence, we get $\psi^{-1} \lb T \rb \subset \mathrm{Ann}(\ld Y \rd)$.

Next, we check $\psi^{-1} \lb T \rb \supset \mathrm{Ann}(\ld Y \rd)$.
We will show 
\begin{align}\label{eq:psiann}
\psi^{-1} \lb T \rb \cap H^{2i} \lb X_\Sigmav, \bZ \rb=\mathrm{Ann}(\ld Y \rd) \cap H^{2i} \lb X_\Sigmav, \bZ \rb
\end{align}
for any $0 \leq i \leq d$.
When $0 \leq i \leq d/2$, we can get $\mathrm{Ann}(\ld Y \rd) \cap H^{2i} \lb X_\Sigmav, \bZ \rb=0$ by the hard Lefschetz theorem.
Here, $\ld Y \rd$ is a K\"{a}hler class of $X_\Sigmav$, and defines the Lefschetz opeartor.
Since we know $\psi^{-1} \lb T \rb \subset \mathrm{Ann}(\ld Y \rd)$, we get \eqref{eq:psiann} for $0 \leq i \leq d/2$.
When $d/2 < i \leq d$, we know
\begin{align}\label{eq:yhigher}
\dim H^i \lb Y, \Omega^{i} \rb=\dim H^{2i}_\mathrm{amb} \lb Y, \bC \rb
\end{align}
by the hard Lefschetz theorem.
On the other hand, from \pref{cr:sct}, we have 
\begin{align}
\dim H^i \lb B, \iota_\ast \bigwedge^i \scT_\bC \rb = \dim H^i \lb B, \iota_\ast \bigwedge^{d-i} \scT^\ast_\bC \rb.
\end{align}
By \cite[Theorem 3.21]{MR2669728}, the dimension of $H^i \lb B, \iota_\ast \bigwedge^{d-i} \scT^\ast_\bC \rb$ is equal to the dimension of the logarithmic Dolbeault cohomology group of the log Calabi--Yau space associated with $B$.
Note that we can see from \pref{pr:monopoly} that the assumption of \cite[Theorem 3.21]{MR2669728} is satisfied.
By the base change theorem (see \cite[Theorem 4.2]{MR2669728}, with a gap fixed in \cite[Theorem 1.10]{FFR19}), we can see that this is equal to the dimension of the cohomology $H^i \lb X_t, \Omega^{d-i} \rb$ of a general fiber $X_t$ of the toric degeneration.
Since $X_t$ and $Y$ are defined by polynomials whose Newton polytopes are polar dual to each other, the dimension of $H^i \lb X_t, \Omega^{d-i} \rb$ is equal to \eqref{eq:yhigher} by \cite[Theorem 4.15]{MR1408560}.
Note that the stringy Hodge numbers of $X_t$ and $Y$ coincide with their ordinary Hodge numbers by \cite[Theorem 6.9]{MR1404917}, since $X_t$ and $Y$ are smooth.
Hence, we get $\dim H^{2i}_\mathrm{amb} \lb Y, \bC \rb=\dim H^i \lb B, \iota_\ast \bigwedge^i \scT_\bC \rb$.
If $\psi^{-1} \lb T \rb \cap H^{2i} \lb X_\Sigmav, \bZ \rb \subsetneq \mathrm{Ann}(\ld Y \rd) \cap H^{2i} \lb X_\Sigmav, \bZ \rb$, then we have
\begin{align}
\dim H^{2i}_\mathrm{amb} \lb Y, \bC \rb 
&= \dim \lb H^{2i}(X_\Sigmav, \bC) / \lc \lb \mathrm{Ann}(\ld Y \rd) \cap H^{2i} \lb X_\Sigmav, \bZ \rb \rb \otimes_\bZ \bC \rc \rb \\
&< \dim \lb H^{2i}(X_\Sigmav, \bC) / \lc \lb \psi^{-1} \lb T \rb \cap H^{2i} \lb X_\Sigmav, \bZ \rb \rb \otimes_\bZ \bC \rc \rb \\
&\leq \dim H^i \lb B, \iota_\ast \bigwedge^i \scT_\bC \rb,
\end{align}
which contradicts $\dim H^{2i}_\mathrm{amb} \lb Y, \bC \rb=\dim H^i \lb B, \iota_\ast \bigwedge^i \scT_\bC \rb$.
Therefore, we obtain \eqref{eq:psiann} also for $d/2 < i \leq d$.
\end{proof}

Therefore, the map $\psi$ \eqref{eq:psi2} defines the injective graded ring homomorphism
\begin{align}
\psi \colon H^\bullet_\mathrm{amb} \lb Y, \bZ \rb \hookrightarrow \lc \bigoplus_{i=0}^d \left. H^i \lb B, \iota_\ast \bigwedge^i \scT_\bZ \rb \rc \middle/ T \right. =:H^\bullet_\mathrm{f} \lb B, \iota_\ast \bigwedge^\bullet \scT_\bZ \rb
\end{align}
of \pref{th:1}.1.

\begin{remark}\label{rm:unimodular}
In general, there is a discrepancy between the dimension of $H^i \lb X_t, \Omega^{d-i} \rb$ and the dimension of $H^i \lb B, \iota_\ast \bigwedge^{d-i} \scT^\ast_\bC \rb$ (see \cite[Introduction]{MR2681794}), and the assumption that the fans $\Sigma, \Sigmav$ are unimodular is crucial in \pref{th:1}.1.
\end{remark}

\begin{proof}[Proof of \pref{th:1}.2]
We take a map of \eqref{eq:xi} $\xi_1 \colon \scP \to \scP(0)$ such that $\xi_1(\sigma) \prec \sigma$.
We take each chart $\psi_\tau \colon U_\tau \to N_\bR\ (\tau \in \tilde{\scP})$ so that we have $\psi_\tau (\xi_1(\pi(\tau)))=0$ when we enlarge the open set $U_\tau$ so that $U_\tau$ contains $\xi_1(\pi(\tau))$.
In order to specify the radiance obstruction of $B$, we choose the element in 
$\Gamma(U_\tau \cap B_0, T^{\mathrm{aff}}B_0) \cong \Gamma(U_\tau \cap B_0, \lb U_\tau \cap B_0 \rb \times N_\bR)$
given by $x \mapsto \lb x, \psi_\tau \lb x \rb \rb$ for each $U_\tau$.
Then the radiance obstruction $c_B$ is represented by the element of $C^1(\scU, \iota_\ast \scT)$ given by
\begin{align}
	c_B\lb (U_{\tau_0}, U_{\tau_1}) \rb = \xi_1(\pi(\tau_1))-\xi_1(\pi(\tau_0))
\end{align}
for each $1$-simplex $(U_{\tau_0},U_{\tau_1})$ of $\scU$.

Every vertex $v \in \scP(0)$ is determined as an element in $N_\bR$ by
\begin{align}
\la m_{\rho_i}, v \ra =-\check{h}(m_{\rho_i}),\quad 1 \leq i \leq d+1,
\end{align}
where $\rho_i\ (i=1, \cdots, d+1)$ are cones in $\Sigmav(1)$ such that $v=\bigcap_{i=1}^{d+1} \sigma(\rho_i)$.
Hence, one has
\begin{align}
v=\sum_{i=1}^{d+1} \check{h}(m_{\rho_i}) n (v, \rho_i) = \sum_{\rho \in \Sigmav(1)} \check{h}(m_\rho) n (v, \rho),
\end{align}
for any $v \in \scP(0)$.
Therefore, we get
\begin{align}
\lb \sum_{\rho \in \Sigmav(1)} \check{h}(m_\rho) \psi (D_\rho) \rb \lb (U_{\tau_0}, U_{\tau_1}) \rb&=
\sum_{\rho \in \Sigmav(1)} \check{h}(m_\rho) \lc n \lb \xi_1 \lb \pi(\tau_1) \rb, \rho \rb - n \lb \xi_1 \lb \pi(\tau_{0}) \rb, \rho \rb \rc \\
&=\xi_1(\pi(\tau_1))-\xi_1(\pi(\tau_0)) \\
&=c_B \lb (U_{\tau_0}, U_{\tau_1}) \rb.
\end{align}
\end{proof}

\begin{remark}
There is a work by Tsutsui \cite{Tsu20}, where he computes the radiance obstructions of tropical Kummer surfaces constructed by taking quotient of tropical tori.
\end{remark}

\section{Logarithmic Hodge theory}\label{sc:ku}

\subsection{PLH on the standard log point}\label{sc:lhs}

We recall the definition of polarized logarithmic Hodge structures (PLH) on the standard log point.
We refer the reader to \cite[Section 2.4]{MR2465224} for the definition of PLH on general fs logarithmic analytic spaces.

The standard log point is the point $\lc 0 \rc$ equipped with the logarithmic structure given by
\begin{align}\label{eq:str}
M_{\lc 0 \rc}:= \bC^\times \oplus \bN \to \scO_{\lc 0\rc}=\bC, \quad (a, n) \mapsto \left\{ \begin{array}{ll}
a & n = 0 \\
0 & n \neq 0. \\
\end{array} \right.
\end{align}
The Kato--Nakayama space $\lb\lc 0 \rc^\mathrm{log}, \scO_{\lc 0 \rc}^\mathrm{log} \rb$ in \cite{MR1700591} associated with the standard log point $\lc 0 \rc$ is the topological space $\lc 0 \rc^\mathrm{log} = S^1:=\lc z \in \bC \relmid |z|=1 \rc$ \cite[Section 1]{MR1700591} equipped with the sheaf of rings $\scO_{\lc 0 \rc}^\mathrm{log} = \bC \ld \log(q) \rd$ \cite[Section 3]{MR1700591}.
Here $ \bC \ld \log(q) \rd$ denotes the locally constant sheaf of $\bC$-algebras on $\lc 0 \rc^\mathrm{log} = S^1$ such that its stalk is the polynomial ring over $\bC$ in one variable and its generator denoted by $\log(q)$ has the monodromy action of $\pi_1 \lb S^1\rb \cong \bZ$ given by $\log(q) \mapsto \log(q)-2 \pi \sqrt{-1}$.
The logarithmic de Rham complex is given by
\begin{align}
0 \to \scO_{\lc 0 \rc}^\mathrm{log}=\bC \ld \log(q) \rd \xrightarrow{d} \omega_{\lc 0\rc}^{1, \mathrm{log}}=\bC \ld \log(q) \rd d \log(q) \to 0,
\end{align}
where $\bC \ld \log(q) \rd d \log(q):=\bC \ld \log(q) \rd \otimes_\bC \bC d \log(q)$, and $\bC d \log(q)$ is the constant sheaf on $\lc 0 \rc^\mathrm{log}$ of $1$-dimensional $\bC$-vector spaces whose generator is denoted by $d \log(q)$.

Let $w$ be an integer, and $\lc h^{p, q}\rc_{p, q \in \bZ}$ be a set of non-negative integers satisfying
\begin{itemize}
\item $h^{p,q}=0$ for almost\ all $p, q$,
\item $h^{p,q}=0$ if $p+q \neq w$,
\item $h^{p,q}=h^{q,p}$ for any $p, q$.
\end{itemize}
A {\em polarized logarithmic Hodge structure} \cite[Definition 2.4.7]{MR2465224} on the standard log point $\lc 0 \rc$ of weight $w$ and of Hodge type $\lb h^{p,q} \rb$ is a triple $\lb H_\bZ, Q, \scrF \rb$ consisting of 
\begin{itemize}
\item a locally constant sheaf of free $\bZ$-modules $H_\bZ$ of rank $\sum_{p,q}h^{p,q}$ on $\lc 0 \rc^\mathrm{log}$,
\item a non-degenerate $\bQ$-bilinear form $Q$ on $H_\bQ :=\bQ \otimes_\bZ H_\bZ$, which is symmetric for even $w$ and antisymmetric for odd $w$, and
\item a decreasing filtration $\scrF=\lc \scrF^p \rc_p$ of the $\scO_{\lc 0\rc}^\mathrm{log}$-module $\scO_{\lc 0\rc}^\mathrm{log} \otimes_\bZ H_\bZ$,
\end{itemize}
satisfying the following four conditions:
\begin{enumerate}
\item There exist a $\bC$-vector space $\scM$, a decreasing filtration $\lc \scM^p \rc_{p \in \bZ}$ on $\scM$, and an isomorphism
\begin{align}\label{eq:isom}
\scO_{\lc 0 \rc}^\mathrm{log} \otimes_\bC \scM  \cong \scO_{\lc 0 \rc}^\mathrm{log} \otimes_\bZ H_\bZ,
\end{align}
such that $\dim \scM^p /\scM^{p+1}=h^{p, w-p}$ for each $p$, and $\scO_{\lc 0 \rc}^\mathrm{log} \otimes_{\bC} \scM^p =\scrF^p$ under the identification \eqref{eq:isom}.
Here $\scM$ and $\scM^p$ are regarded as constant sheaves on $\lc 0\rc^\mathrm{log}$.
\item If $p+q>w$, then 
\begin{align}\label{eq:HR1}
Q(\scrF^p, \scrF^q)=0,
\end{align}
where $Q$ is regarded as the natural extension to an $\scO_{\lc 0 \rc}^\mathrm{log}$-bilinear form.
\item (Griffiths transversality)
\begin{align}
\lb d \otimes 1 \rb \lb \scrF^p \rb \subset \omega_{\lc 0 \rc}^{1, \mathrm{log}} \otimes_{\scO_{\lc 0 \rc}^\mathrm{log}} \scrF^{p-1}
\end{align}
for any $p$, where $d \otimes 1:=d \otimes 1_{H_\bZ} \colon \scO_{\lc 0 \rc}^\mathrm{log} \otimes_\bZ H_\bZ \to \omega_{\lc 0 \rc}^{1, \mathrm{log}} \otimes_\bZ H_\bZ$.
\item (Positivity) Let $y \in \lc 0 \rc^\mathrm{log}$ and $s \in \Hom_{\bC-\mathrm{alg}}\lb \scO_{\lc 0 \rc,y}^\mathrm{log}, \bC\rb$.
Let further $F(s)=\lc F^p(s) \rc_p$ be the decreasing filtration on the $\bC$-vector space $H_{\bC, y}:=\bC \otimes_\bZ H_{\bZ,y}$ defined by
\begin{align}
F^p(s):= \bC \otimes_{\scO_{\lc 0 \rc, y}^\mathrm{log}} \scrF^p_y, \quad \mathrm{with}\ s \colon \scO_{{\lc 0 \rc} ,y}^\mathrm{log} \to \bC.
\end{align}
Consider the map 
\begin{align}\label{eq:positive}
M_{\lc 0 \rc}=\bC^\times \oplus \bN \to \bC^\times, \quad (a, n) \mapsto \exp \lb s\lb \log(a) + n \log(q)\rb \rb,
\end{align}
where $\log(a)$ denotes the logarithm of $a \in \bC^\times$, which is determined up to $2 \pi \sqrt{-1} \bZ$, and $\log(a) + n \log(q) \in  \scO_{\lc 0 \rc,y}^\mathrm{log} / \lb 2 \pi \sqrt{-1} \bZ \rb$.
If the map \eqref{eq:positive} is sufficiently near to the structure morphism of the logarithmic structure \pref{eq:str} in the topology of simple convergence of $\bC$-valued functions, then $\lb H_{\bZ, y}, Q_y, F(s) \rb$ is a polarized Hodge structure of weight $w$ in the usual sense. \label{positivity}
\end{enumerate}

\subsection{Extension of variations of polarized Hodge structure}\label{sc:ext}

We briefly recall how variations of polarized Hodge structure on a punctured disk extend to logarithmic variations of polarized Hodge structure on the disk.
This subsection is based on \cite[Section 2.5.15]{MR2465224}.

Consider a variation of polarized Hodge structure $(H_\bZ, Q, \scrF)$ on the small punctured disk $D_\varepsilon \setminus \lc 0 \rc$, where $D_\varepsilon:= \lc q \in \bC \relmid |q| < \varepsilon \rc$.
It is a triple consisting of 
\begin{itemize}
\item a locally constant sheaf $H_\bZ$ on $D_\varepsilon \setminus \lc 0 \rc$ of free $\bZ$-modules of finite rank,
\item a bilinear pairing $Q \colon H_\bQ \times H_\bQ \to \bQ$ , where $H_\bQ:=H_\bZ \otimes_\bZ \bQ$, and
\item a decreasing filtration $\scrF=\lc \scrF^p\rc_p$ of $\scO_{D_\varepsilon \setminus \lc 0 \rc} \otimes_\bZ H_\bZ$ by $\scO_{D_\varepsilon \setminus \lc 0 \rc}$-submodules,
\end{itemize}
that satisfies the following conditions:
\begin{enumerate}
\item Each fiber $(H_{\bZ, x}, Q_x, \scrF_x)$ is a polarized Hodge structure.
\item Griffiths transversality: $\lb d \otimes_\bZ \id_{H_\bZ} \rb \lb \scrF^p\rb \subset \Omega_{D_\varepsilon \setminus \lc 0 \rc}^1 \otimes_{\scO_{D_\varepsilon \setminus \lc 0 \rc}} \scrF^{p-1}$.
\end{enumerate}
We fix a point $q_0 \in D_\varepsilon \setminus \lc 0 \rc$, and let $H_0$ denote the stalk of $H_\bZ$ at $q_0$.
We consider the map
\begin{align}
\bH_R \to D_\varepsilon,\quad z \mapsto \exp(2 \pi \sqrt{-1} z),
\end{align}
where $R$ is a positive real number such that $\exp(-2 \pi R) < \varepsilon$, and $\bH_R:= \lc z \in \bC \relmid \Im z > R\rc$.
We also fix a point $z_0 \in \bH_R$ such that $\exp(2 \pi \sqrt{-1} z_0)=q_0$.
We have an isomorphism between the pullback of $H_\bZ$ to $\bH_R$ and $H_0$ whose germ at $z_0$ is the identity map.
Here we regard $H_0$ as a constant sheaf on $\bH_R$.
Via this isomorphism, we get the associated period map
\begin{align}
\Phi \colon \bH_R \to D,
\end{align}
where $D$ denotes the Griffiths' period domain.

Assume that the monodromy $\gamma \colon H_0 \to H_0$ is unipotent.
We set $N:=\log(\gamma) \colon H_{0, \bQ} \to H_{0, \bQ}$, where $H_{0, \bQ}:=H_0 \otimes_\bZ \bQ$.
The map
\begin{align}\label{eq:descend}
\tilde{\Psi} \colon \bH_R \to \check{D},\quad z \mapsto \exp \lb -z N \rb \cdot \Phi(z)
\end{align}
descends to a holomorphic map $\Psi \colon D_\varepsilon \setminus \lc 0 \rc \to \check{D}$, where $\check{D}$ is the compact dual of $D$.
By Schmid's nilpotent orbit theorem \cite[Theorem 4.9]{MR0382272}, we know that this extends to a holomorphic map $\Psi \colon D_\varepsilon \to \check{D}$.

The logarithmic structure on $D_\varepsilon$ associated with the divisor $\lc 0 \rc \subset D_\varepsilon$ is given by
\begin{align}
\alpha \colon M_{D_\varepsilon} :=\bigcup_{n \geq 0} \scO_{D_\varepsilon}^\times \cdot q^n \hookrightarrow \scO_{D_\varepsilon}.
\end{align}
The Kato--Nakayama space $\lb D_\varepsilon^\mathrm{log}, \scO_{D_\varepsilon}^\mathrm{log} \rb$ associated with the logarithmic analytic space $\lb D_\varepsilon, M_{D_\varepsilon} \rb$ is
\begin{align}
D_\varepsilon^\mathrm{log} &= \lc (q, h) \relmid q \in D_\varepsilon, h \in \Hom(M_{D_\varepsilon, q}^\mathrm{gp}, S^1),  h(f)= \frac{f(q)}{|f(q)|} \mathrm{\ for\ any\ } f \in \scO_{D_\varepsilon}^\times \rc \\
&= \lb D_\varepsilon \setminus \lc 0 \rc \rb \sqcup S^1 \cong \ld 0, \varepsilon \rb \times S^1, \\
\scO_{D_\varepsilon}^\mathrm{log} &= \tau^{-1} \lb \scO_{D_\varepsilon} \rb \ld \log(q) \rd,
\end{align}
where $M_{D_\varepsilon, q}^\mathrm{gp}$ is the Grothendieck group associated with the stalk $M_{D_\varepsilon, q}$ at $q$, and $\tau \colon D_\varepsilon^\mathrm{log} \to D_\varepsilon$ is the natural map given by $(q, h) \mapsto q$.

The pushforward of the locally constant sheaf $H_\bZ$ on $D_\varepsilon \setminus \lc 0\rc$ by the inclusion $D_\varepsilon \setminus \lc 0\rc \hookrightarrow D_\varepsilon^\mathrm{log}$ is a locally constant sheaf on $D_\varepsilon^\mathrm{log}$ \cite[Theorem 3.1.2]{MR2046612}.
This will also be denoted by $H_\bZ$ in the following.
We have the isomorphism of $\scO_{D_\varepsilon}^\mathrm{log}$-modules
\begin{align}
\xi:=\exp \lb \lb 2 \pi \sqrt{-1} \rb^{-1} \log(q) \otimes N \rb \colon \scO_{D_\varepsilon}^\mathrm{log} \otimes_\bZ H_0 \xrightarrow{\sim} \scO_{D_\varepsilon}^\mathrm{log} \otimes_\bZ H_0, 
\end{align}
where $H_0$ is regarded as a constant sheaf on $D_\varepsilon^\mathrm{log}$.
The isomorphism
\begin{align}
H_{\bZ, q_0} = H_0 \to \left. \xi^{-1} \lb 1 \otimes H_0 \rb \right|_{q_0}, \quad v \mapsto \xi^{-1} (1 \otimes v)
\end{align}
of the stalks at $q_0$ preserves the actions of $\pi_1(D_\varepsilon^\mathrm{log}) \cong  \pi_1(D_\varepsilon \setminus \lc 0 \rc)$ (See the proof of \cite[Proposition 2.3.2]{MR2465224}).
Hence, it is extended uniquely to an isomorphism $H_\bZ \xrightarrow{\sim} \xi^{-1} \lb 1 \otimes H_0 \rb$ on $D_\varepsilon^\mathrm{log}$.
By taking the tensor product with $\scO_{D_\varepsilon}^\mathrm{log}$, we get the isomorphism of $\scO_{D_\varepsilon}^\mathrm{log}$-modules
\begin{align}\label{eq:diskisom}
\nu \colon \scO_{D_\varepsilon}^\mathrm{log} \otimes_\bZ H_\bZ \xrightarrow{\sim} \scO_{D_\varepsilon}^\mathrm{log} \otimes_\bZ \xi^{-1} \lb 1 \otimes H_0 \rb = \scO_{D_\varepsilon}^\mathrm{log} \otimes_\bZ H_0.
\end{align}

The holomorphic map $\Psi \colon D_\varepsilon \to \check{D}$ defines a filtration $\scrF_\Psi$ on $\scO_{D_\varepsilon} \otimes_\bZ H_0$.
We define $\tilde{\scrF}_\Psi:=\nu^{-1} \lb \scO_{D_\varepsilon}^\mathrm{log} \otimes_{\scO_{D_\varepsilon}} \scrF_\Psi \rb$.
The triple $\lb H_\bZ, Q, \tilde{\scrF}_\Psi \rb$ is a logarithmic variation of polarized Hodge structure (LVPH) on $D_\varepsilon$.
See \cite[Section 2.4.9]{MR2465224} for the definition of LVPH.
Its restriction to $D_\varepsilon \setminus \lc 0 \rc$ coincides with the original variation of polarized Hodge structure $(H_\bZ, Q, \scrF)$.
By taking the inverse image of the LVPH $\lb H_\bZ, Q, \tilde{\scrF}_\Psi \rb$ by the inclusion $\lc 0\rc \hookrightarrow D_\varepsilon$, we obtain a polarized Hodge structure on the standard log point, which we recalled in \pref{sc:lhs}.

\begin{remark}
Let $\Gamma$ be the subgroup of $\Aut(H_0, Q)$ generated the monodromy $\gamma \colon H_0 \to H_0$.
The map $\xi \circ \nu$ gives a $\Gamma$-level structure in the sense of \cite[Section 2.5.2]{MR2465224} (See Example 2 of \cite[Section 2.5.2]{MR2465224}.)
By \cite[Proposition 2.5.5]{MR2465224}, the conditions of Griffiths transversality and positivity for this PLH on the standard log point are equivalent to the following respectively:
\begin{enumerate}
\item (Griffiths transversality)
\begin{align}\label{eq:Gtrans}
N \cdot F_{\Psi, 0}^p \subset F_{\Psi, 0}^{p-1}
\end{align}
for any $p$, where $F_{\Psi, 0}=\lc F_{\Psi, 0}^p \rc_p$ is the fiber of the filtration $\scrF_{\Psi}$ at $0 \in D_\varepsilon$.
\item (Positivity)
When the imaginary part of $z \in \bC$ is sufficiently large, one has
\begin{align}\label{eq:positivity}
\exp \lb z N \rb \cdot  F_{\Psi, 0} \in D.
\end{align}
\end{enumerate}
\end{remark}

\section{Mirror symmetry for Calabi--Yau hypersurfaces}\label{sc:mirror}

In this section, we briefly recall the definitions of residual B-model/ambient A-model Hodge structure, and the mirror symmetry between them.
We refer the reader to \cite[Section 6]{MR3112512} for the details of the context of this chapter. 
There is also a review in the case of K3 hypersurfaces in \cite[Section 7]{Uedak3}.
For A-model Hodge structure, see also \cite[Section 8.5]{MR1677117}.

Let $M$ be a free $\bZ$-module of rank $d+1$ and $N:=\Hom(M, \bZ)$ be the dual lattice.
Let further $\Sigma \subset N_\bR, \Sigmav \subset M_\bR$ be unimodular fans whose fan polytopes $\check{\Delta} \subset N_\bR, \Delta \subset M_\bR$ are polar dual to each other.
We set $A:=(\Delta \cap M) \setminus \lc 0\rc$.
Let $\beta \colon \bZ^A \to M$ be the homomorphism sending the standard basis vectors $e_m$ $(m \in A)$ to $m \in M$.
We set $\bL:=\Ker(\beta)$.
The \emph{fan sequence} is the exact sequence
\begin{align}
0 \to \bL \to \bZ^A \xto{\beta} M \to 0
\end{align}
and the \emph{divisor sequence} is its dual
\begin{align}
0  \to N \xto{\beta^\ast} \lb \bZ^A \rb^\ast \to \bL^\ast \to 0.
\end{align}
One has
$\Pic \lb X_\Sigmav \rb \cong H^2(X_\Sigmav, \bZ) \cong \bL^\ast$.
We also set $\cM :=  \bL^\ast \otimes_\bZ \bCx, \bT:=N \otimes_\bZ \bCx$ and consider the exact sequence
\begin{align}
1 \to \bT \to (\bCx)^{A} \to \cM \to 1.
\end{align}

For a given element $\alpha=(a_m)_{m \in A} \in (\bCx)^{A}$, we associate the polynomial 
\begin{align}
	W_\alpha(x):=\sum_{m \in A} a_m x^m.
\end{align}
We consider the complex hypersurface
\begin{align}
Y_\alpha^\circ :=\lc x \in \bT \relmid W_\alpha(x)=1 \rc,
\end{align}
and its closure $Y_\alpha$ in the complex toric variety $X_{\Sigma} \supset \bT$ associated with the fan $\Sigma$.
Let $(\bCx)^{A}_{\mathrm{reg}}$ be the set of $\alpha \in (\bCx)^{A}$ such that $Y_\alpha$ is $\Sigma$-regular, i.e., the intersection of $Y_\alpha$ with any torus orbit of $X_{\Sigma}$ is a smooth subvariety of codimension one.
We consider the family of $\Sigma$-regular hypersurfaces given by the second projection 
\begin{align}
\tilde{\varphi} \colon \tilde{\frakY}:= \lc (x, \alpha) \in X_{\Sigma} \times (\bCx)^{A}_{\mathrm{reg}} \relmid W_\alpha(x)=1  \rc \to (\bCx)^{A}_{\mathrm{reg}},
\end{align}
and the action of $\bT$ to this family given by
\begin{align}
t \cdot (x, \alpha):=\lb t^{-1}x, (t^m a_m)_{m \in A} \rb,
\end{align}
where $t \in \bT$.
We write the quotient by this action as $\varphi \colon \frakY \to \scM_{\mathrm{reg}}$, where $\scM_{\mathrm{reg}}:=(\bCx)^{A}_{\mathrm{reg}} / \bT$.
The space $\scM_{\mathrm{reg}}$ is a Zariski open subset in $\cM$, and can be regarded as a parameter space of $\Sigma$-regular hypersurfaces whose Newton polytopes are $\Delta$.
Consider the residue part of $H^d(Y_\alpha, \bC)$ defined by
\begin{align}
H^d_\mathrm{res}(Y_\alpha, \bC) := \mathrm{Im} \lb \mathrm{Res} \colon H^0 \lb X_\Sigma, \Omega^{d+1}_{X_\Sigma} (\ast Y_\alpha) \rb \to H^d(Y_\alpha, \bC) \rb,
\end{align}
where $H^0 \lb X_\Sigma, \Omega^{d+1}_{X_\Sigma} (\ast Y_\alpha) \rb$ is the space of $(d+1)$-forms with arbitrary poles along $Y_\alpha$.
One can show \cite[Section 6.3]{MR3112512} that $H^d_\mathrm{res}(Y_\alpha, \bC)$ can be identified with the lowest weight component $W_d \lb H^d (Y_\alpha^\circ, \bC) \rb$ of the mixed Hodge structure on $H^d (Y_\alpha^\circ, \bC)$, and hence comes naturally with a $\bQ$-Hodge structure of weight $d$.
The {\em residual B-model VHS} \cite[Definition 6.5]{MR3112512} of the family $\check{\varphi} \colon \check{\frakY} \to \scM_{\mathrm{reg}}$ is the tuple $(\scrH_B, \nabla^B, H_{B,\bQ}, \scrF_B^\bullet, Q_B)$ consisting of
\begin{itemize}
\item the locally-free subsheaf $\scrH_B$ of $(R^d \varphiv_\ast \bC_\fYv) \otimes \cO_{\cM_\mathrm{reg}}$ whose fiber at $[\alpha]$ is $H^d_\mathrm{res}(Y_\alpha, \bC)$,
\item the Gauss--Manin connection $\nabla^B$ on $\scrH_B$,
\item the rational structure $H_{B, Q} \subset \Ker \nabla^B$ explained above,
\item the standard Hodge filtration $\scrF_{B,[\alpha]}^p = \bigoplus_{j \ge p} H_\mathrm{res}^{j,d-j}(Y_\alpha, \bC)$, and
\item the intersection form $Q_B \colon \scrH_B \otimes \scrH_B \to \scO_{\scM_\mathrm{reg}}$,
\begin{align}
(\omega_1, \omega_2) \mapsto (-1)^{d(d-1)/2} \int_{Y_\alpha} \omega_1 \cup \omega_2.
\end{align}
\end{itemize}
The composition
\begin{align}
H_{d+1} \lb \bT, Y_\alpha^\circ; \bC \rb
\xto{\ \partial \ } H_d(Y_\alpha^\circ, \bC)
\to H_d(Y_\alpha, \bC)
\xto{\ \mathrm{PD} \ } H^d(Y_\alpha, \bC)
\end{align}
gives a surjection
$\mathrm{VC} \colon H_{d+1} \lb \bT, Y_\alpha^\circ; \bC \rb \to H^d_\mathrm{res}(Y_\alpha, \bC)$ \cite[Lemma 6.6]{MR3112512}.
Here $\mathrm{PD}$ is the Poincar\'{e} duality isomorphism.
Note that the map $\mathrm{VC}$ is a surjection onto $H^d_\mathrm{res}(Y_\alpha, \bC)$, not onto $H^d(Y_\alpha, \bC)$.
The \emph{vanishing cycle integral structure} $H_{B, \bZ}^\mathrm{vc} \subset H_{B, \bQ}$ on the residual B-model VHS \cite[Definition 6.7]{MR3112512} is the image of $H_{d+1} \lb \bT, Y_\alpha^\circ; \bZ \rb$ by the map $\mathrm{VC}$.
The residual B-model VHS satisfies the Hodge--Riemann bilinear relations
\begin{align}\label{eq:HR}
Q_B(\scrF_B^p, \scrF_B^{d+1-p})=0, \quad \sqrt{-1}^{p-q} Q_B(\phi, \kappa(\phi))>0
\end{align}
for any $\phi \in \lc \scrF_B^p \cap \kappa (\scrF_B^{d-p}) \rc \setminus \lc 0\rc$ and forms a variation of polarized Hodge structure.
Here $\kappa$ denotes the real involution with respect to the real structure $H_{B, \bQ} \otimes_\bQ \bR$.

We move on to the A-model side.
Let $X_\Sigmav$ be the complex toric variety associated with $\Sigmav$, and $\iota \colon Y \hookrightarrow X_\Sigmav$ be an anti-canonical hypersurface.
We set
\begin{align}
U=\lc \tau \in H^2_\mathrm{amb} \lb Y, \bC \rb \relmid \Re \la \tau, d \ra \leq -M \ \mathrm{for\ any\ } d \in \mathrm{Eff}(Y) \setminus \lc 0 \rc \rc,
\end{align}
for some sufficiently large $M$.
Here $\mathrm{Eff}(Y)$ denotes the semigroup of effective curves on $Y$.
This open set $U$ is considered as a neighborhood of the large radius limit point.
We take a $\bC$-basis $\lc \eta_i \rc_{i=1}^l$ of $H^2_\mathrm{amb}(Y, \bC)$.
Let $\lc \tau_i \rc_{i=1}^l$ be the corresponding coordinates on $H^2_\mathrm{amb}(Y, \bC)$; $\tau =\sum_{i=1}^l \tau^i \eta_i$.
The {\em ambient A-model VHS} \cite[Definition 6.2]{MR3112512} of $Y$ is the tuple $(\scrH_A, \nabla^A, \scrF_A^\bullet, Q_A)$ consisting of
\begin{itemize}
\item the locally free sheaf $\scrH_A = H_\mathrm{amb}^\bullet(Y, \bC) \otimes \scO_U$,
\item the Dubrovin connection $\nabla^A = d + \sum_{i=1}^l (\eta_i \circ_\tau) \, d \tau^i \colon \scrH_A \to \scrH_A \otimes \Omega_{U}^1$,
\item the Hodge filtration $\scrF_A^p = H_\mathrm{amb}^{\le 2(d-p)}(Y, \bC) \otimes \scO_U$, and
\item the $(-1)^d$-symmetric pairing $Q_A : \scrH_A \otimes \scrH_A \to \scO_U$,
\begin{align}
(\alpha, \beta) \mapsto (2 \pi \sqrt{-1})^d \int_Y \lc (-1)^{\frac{\deg}{2}} \alpha \rc \cup \beta,
\end{align}
where $\deg$ denotes the degree as a class in $H_\mathrm{amb}^\bullet(Y, \bC)$.
\end{itemize}
Let $L_Y(\tau)$ be the quantum differential equation, i.e., the $\End(H^\bullet_\mathrm{amb}(Y, \bC))$-valued function on $U$ satisfying $\nabla^A_i L_Y(\tau)=0 \ (1 \le i \le l)$ and $L_Y(\tau)=\id + O(\tau)$.
The {\em ambient $\widehat{\Gamma}$-integral structure} \cite[Definition 6.3]{MR3112512} on the ambient A-model VHS is the local subsystem $H_{A, \bZ}^\mathrm{amb} \subset H_{A, \bC}:=\Ker \nabla^A$ defined by
\begin{align}\label{eq:latticea}
H^\mathrm{amb}_{A, \bZ}:=
\lc \lb 2 \pi \sqrt{-1} \rb^{-d} L_Y(\tau) \lb \widehat{\Gamma}_Y \cup \lb 2 \pi \sqrt{-1} \rb^{\frac{\mathrm{deg}}{2}} \mathrm{ch}(\iota^\ast \scE) \rb \relmid \scE \in K(X_\Sigmav) \rc,
\end{align}
where $\widehat{\Gamma}_Y$ denotes the Gamma class of $Y$.
The ambient A-model VHS also satisfies the Hodge--Riemann bilinear relations \eqref{eq:HR} and forms a variation of polarized Hodge structure.

We also take a basis $\lc p_i \rc_{i=1}^r$ of $\bL^\ast$ such that each $p_i$ is nef.
It determines coordinates $\lc q_i \rc_{i=1}^r$ on $\cM \supset \scM_{\mathrm{reg}}$.
Let $u_i \in H^2(X_\Sigmav, \bZ)$ be the Poincar\'{e} dual of the toric divisor $D_{\rho_i}$ corresponding to the one-dimensional cone $\rho_i \in \Sigmav$, and $v = \sum_{i=1}^m u_i$ be the anti-canonical class.
Givental's $I$-function is defined as the series
\begin{align}
I_{X_\Sigmav, Y}(q, z) = \exp \lb \frac{1}{z} \sum_{i=1}^r p_i  \log q_i \rb
\sum_{d \in \mathrm{Eff}(X_\Sigmav)} q^d \,
\frac{
	\prod_{k=-\infty}^{\la d, v \ra} (v + k z)
	\prod_{j=1}^m \prod_{k=-\infty}^0
	(u_j + k z)}
{\prod_{k=-\infty}^0
	(v + k z)
	\prod_{j=1}^m \prod_{k=-\infty}^{\la d, u_j \ra}
	(u_j + k z)},
\end{align}
which gives a multi-valued map from an open subset of $\scM \times \bCx$ to the classical cohomology ring $H^\bullet(X_\Sigmav,\bC)$.
If we write
\begin{align}\label{eq:expand}
	I_{X_\Sigmav, Y}(q, z) = F(q) + \frac{G(q)}{z} + O(z^{-2}),
\end{align}
the mirror map $\varsigma \colon \scM \to H^2_{\mathrm{amb}}(Y, \bC)$ is a multi-valued map given by
\begin{align}\label{eq:period}
	\iota^\ast \lb \frac{G(q)}{F(q)}\rb.
\end{align}

The group $H^2_{\mathrm{amb}}(Y, \bZ)$ acts on $H^2_{\mathrm{amb}}(Y, \bC)$ by the Galois action \cite[Proposition 2.3]{MR2553377}.
It preserves the connection $\nabla^A$, the pairing $Q_A$, and the integral structure $H_{A, \bZ}^\mathrm{amb}$, and defines the 
$H^2_{\mathrm{amb}}(Y, \bZ)$-action on the ambient A-model VHS $(\scrH_A, \nabla^A, H_{A, \bZ}^\mathrm{amb}, \scrF_A^\bullet, Q_A)$.
The ambient A-model VHS descends to the quotient $U/H^2_{\mathrm{amb}}(Y, \bZ)$.
The mirror map $\varsigma$ defines a single-valued map from a neighborhood of $q=0$ in $\scM$ to $H^2_{\mathrm{amb}}(Y, \bC)/ H^2_{\mathrm{amb}}(Y, \bZ)$.
The residual B-model VHS is isomorphic to the ambient A-model VHS via the mirror map $\varsigma$ including their integral structures in a neighborhood of $q=0$ \cite[Theorem 6.9]{MR3112512}.

\section{Tropical periods and logarithmic Hodge theory}\label{sc:periods}

We work on the same setup and use the same notations as in the introduction.
We consider a Laurent polynomial $F=\sum_{m \in \Delta \cap M} k_m x^m \in K[x^\pm_{1}, \cdots, x^\pm_{d+1}]$ over $K$ such that the map $\Delta \cap M \to \bZ$ given by $m \mapsto \mathrm{val}(k_m)$ can be extended to a piecewise linear function $\check{g} \colon M_\bR \to \bR$ that is strictly convex on $\Sigmav$.
Let $(B, \tilde{\scP}):=(B^{\check{g}}, \scP(\tilde{\Sigma}))$ be a $d$-sphere with an integral affine structure with singularities constructed in \pref{sc:construction}.
In the following, we assume $k_0=-1$ by multiplying an element of $K$ to $F$.

Since $\Sigmav$ is unimodular, when $\varepsilon \to 0$, the dominant terms of $f_q$ in the inverse image of a small open ball in $\bR^{d+1}$ by
\begin{align}
\mathrm{Log}_\varepsilon \colon \lb \bC^\times \rb^{d+1} \to \bR^{d+1}, \quad \lb x_1, \cdots, x_{d+1} \rb \mapsto \lb \log_{\varepsilon} | x_1 |, \cdots, \log_{\varepsilon} |x_{d+1} | \rb
\end{align}
are of the form $1+x_1'+\cdots + x_k'$ in a suitable coordinate system $\lb x_1', \cdots, x_{d+1}' \rb$.
Hence, when $\varepsilon$ is sufficiently small, the intersection of the hypersurface $V_q$ with the maximal-dimensional torus orbit $\lb \bC^\times \rb^{d+1}$ is smooth.
For the same reason, when $\varepsilon$ is sufficiently small, the intersection of $V_q$ with any lower-dimensional torus orbit is also smooth.
Therefore, the hypersurface $V_q$ is $\Sigma$-regular for any $q \in D_\varepsilon \setminus \lc 0 \rc$ when $\varepsilon$ is sufficiently small.
By replacing the real number $\varepsilon$ with a smaller one if necessary, we get a map $l$ given by
\begin{align}
l \colon D_\varepsilon \setminus \lc 0 \rc \to \scM_\mathrm{reg},\quad q \mapsto \ld \lb k_m (q) \rb_{m \in A} \rd,
\end{align}
where $A:=(\Delta \cap M) \setminus \lc 0\rc$ and $k_m(q)$ denotes the complex number obtained by substituting $q$ to $t$ in $k_m$.
By pulling back the residual B-model VHS over $\scM_\mathrm{reg}$ by the map $l$, we obtain a variation of polarized Hodge structure over $D_\varepsilon \setminus \lc 0 \rc$.
We write it also as $(\scrH_B, \nabla^B, H_{B,\bZ}^\mathrm{vc}, \scrF_B^\bullet, Q_B)$ in the following.

We fix a point $q_0 \in D_\varepsilon \setminus \lc 0 \rc$ and set 
\begin{align}
\check{\scF}&:=\lc \lc F^p \rc_{p=1}^d \relmid F^p \in \mathrm{Gr}(r_p, H_{B, \bC, q_0}^\mathrm{vc} ), F^1 \supset \cdots \supset F^d \rc, \\
\check{D}&:= \lc \lc F^p \rc_{p=1}^d \in \check{\scF} \relmid Q_B\lb F^p, F^{d-p+1} \rb=0 \rc,\\
D&:= \lc \lc F^p \rc_{p=1}^d \in \check{D} \relmid \forall v \in F^{p} \cap \overline{F}^q \setminus \lc 0 \rc, \lb \sqrt{-1}\rb^{p-q} Q_B \lb v, \overline{v} \rb >0 \rc,
\end{align}
where $H_{B, \bC, q_0}^\mathrm{vc}$ is the stalk of $H_{B, \bC}^\mathrm{vc}:=H_{B, \bZ}^\mathrm{vc} \otimes_\bZ \bC$ at $q_0$, and 
$r_p$ is the dimension of the stalk of $\scrF_{B}^p$.
We consider the map
\begin{align}
\pi \colon \bH_R \to D_\varepsilon,\quad z \mapsto \exp(2 \pi \sqrt{-1} z),
\end{align}
where $R$ is a positive real number such that $\exp(-2 \pi R) < \varepsilon$, and $\bH_R:= \lc z \in \bC \relmid \Im z > R\rc$.
We also fix a point $z_0 \in \bH_R$ such that $\exp(2 \pi \sqrt{-1} z_0)=q_0$.
We have an isomorphism between $\pi^\ast H_{B, \bZ}^\mathrm{vc}$ and the constant sheaf $H_{B, \bZ, q_0}^\mathrm{vc}$ on $\bH_R$ whose germ at $z_0$ is the identity map.
Via the isomorphism, we get the associated period map
\begin{align}
\Phi \colon \bH_R \to D.
\end{align}

\begin{proof}[Proof of \pref{th:2}]
Consider the pullback of the ambient A-model VHS over $U/H^2_{\mathrm{amb}}(Y, \bZ)$ that we discussed in the final paragraph of \pref{sc:mirror} by the map $\varsigma \circ l$.
We also write it as $(\scrH_A, \nabla^A, H_{A, \bZ}^\mathrm{amb}, \scrF_A^\bullet, Q_A)$.
We have the isomorphism of variation of polarized Hodge structure 
\begin{align}\label{eq:misom}
(\scrH_B, \nabla^B, H_{B,\bZ}^\mathrm{vc}, \scrF_B^\bullet, Q_B) \cong (\scrH_A, \nabla^A, H_{A, \bZ}^\mathrm{amb}, \scrF_A^\bullet, Q_A)
\end{align}
on $D_\varepsilon \setminus \lc 0\rc$ via the mirror isomorphism \cite[Theorem 6.9]{MR3112512}.
Here we replace the real number $\varepsilon$ with a smaller one again if necessary.
We will show the theorem by using this isomorphism.
In the following, $\lb H_\bZ^\mathrm{trop}, Q_\mathrm{trop}, \scrF_\mathrm{trop} \rb$ will denote the tropical period of $B$ which is defined in \pref{dl:period}.

First, we compute the monodromy of $H_{B,\bZ}^\mathrm{vc} \cong H_{A, \bZ}^\mathrm{amb}$.
The top term of the map $\varsigma \circ l \colon D_\varepsilon \setminus \lc 0 \rc \to H_\mathrm{amb}^2 (Y, \bC)$ is given by
\begin{align}\label{eq:top}
\sum_{i=1}^r p_i \log q_i (l (q) ),
\end{align}
where $\lc q_i \rc_{i=1}^r$ denote the coordinates on $\scM$ determined by the basis $\lc p_i \rc_{i=1}^r$ of $\bL^\ast$.
Let $\rho_m \in \Sigmav(1)$ be the cone whose primitive generator is $m \in A$.
Suppose $D_{\rho_m}=\sum_{i=1}^r b_{m,i} p_i$ in $H^2(X_\Sigmav, \bZ)$, where $b_{m,i} \in \bZ$.
Then we have
\begin{align}
q_i(l(q))&=q_i \lb \sum_{m \in A} D_{\rho_m} \otimes_\bZ k_m(q) \rb \\
&=q_i \lb \sum_{m \in A} \lb \sum_{j=1}^r b_{m, j} p_j \rb \otimes_\bZ k_m(q) \rb \\
&=q_i \lb \sum_{j=1}^r p_j \otimes_\bZ \prod_{m \in A} k_m(q)^{b_{m, j}} \rb \\
&=\prod_{m \in A} k_m(q)^{b_{m,i}}.
\end{align}
Hence, we obtain
\begin{align}
\sum_{i=1}^r p_i \log q_i \lb l(q)\rb 
&=\sum_{i=1}^r p_i \lb \sum_{m \in A} b_{m,i} \log \lb  k_m(q) \rb \rb\\
&\sim \sum_{i=1}^r p_i 
\lb
\sum_{m \in A}
b_{m,i} \log q \cdot \mathrm{val}(k_m) 
\rb\\
&=\log q \sum_{i=1}^r  p_i \lb \sum_{m \in A} b_{m,i} \mathrm{val}(k_m) \rb.
\end{align}
From this and \eqref{eq:top}, we can see that for each coordinate $\tau^i$, the map $\varsigma \circ l$ goes around the large radius limit $\sum_{m \in A} b_{m,i} \mathrm{val}(k_m)$ times.
The monodromy around the large radius limit with respect to each coordinate $\tau^i$ is given by the cup product of $\exp \lb -2 \pi \sqrt{-1} p_i \rb$ \cite[Theorem 10.2.4]{MR1677117}.
(See also \cite[Proposition 2.10 $\rm(\hspace{.08em}ii\hspace{.08em})$]{MR2553377}.)
Hence, it turns out that the monodromy of flat sections of $\scrH_{A}$ is given by the cup product of
\begin{align}\label{eq:monodromy}
\prod_{i=1}^r \exp \lb -2 \pi \sqrt{-1} p_i \lb \sum_{m \in A} b_{m,i} \mathrm{val}(k_m) \rb \rb &= \exp \lb -2 \pi \sqrt{-1} \sum_{i=1}^r p_i \lb \sum_{m \in A} b_{m,i} \mathrm{val}(k_m) \rb \rb \\
&=\exp \lb -2 \pi \sqrt{-1} \lb \sum_{m \in A} \mathrm{val}(k_m) D_{\rho_m} \rb \rb. \label{eq:monodromy}
\end{align}
From \pref{th:1}, we can see that the radiance obstruction $c_B$ of $B$ is given by
\begin{align}\label{eq:rd}
c_B=\sum_{m \in A} \mathrm{val}(k_m) \psi(D_{\rho_m}).
\end{align}
Hence, \eqref{eq:monodromy} is equal to
\begin{align}\label{eq:monodromy2}
\exp \lb -2 \pi \sqrt{-1} \psi^{-1} (c_B) \rb.
\end{align}
This monodromy is unipotent, and the variation of polarized Hodge structure \pref{eq:misom} on $D_\varepsilon \setminus \lc 0 \rc$ is extended to a logarithmic variation of polarized Hodge structure on $D_\varepsilon$ as we recalled in \pref{sc:ext}.

Next, we show $(H_{B, \bZ}^\mathrm{vc}, Q_B) \cong (H_\bZ^\mathrm{trop}, Q_\mathrm{trop})$.
We consider the canonical extension of Deligne in \cite{MR0417174}.
(One can also find its definition, for instance, in \cite[Definition 11.4]{MR2393625}.)
The canonical extension $\widetilde{\scrH}_A$ for $\scrH_A$ to $D_\varepsilon$ is constructed as follows:
We set
\begin{align}
N:=\log \lb \exp \lb - 2 \pi \sqrt{-1} \psi^{-1} (c_B) \rb \rb=- 2 \pi \sqrt{-1} \psi^{-1} (c_B),
\end{align}
and $H_A :=\Ker \nabla^A$.
For a section $s \in H^0 \lb \bH_R, \pi^\ast H_A \rb$, we define a holomorphic section $\varphi(s) \in H^0 \lb \bH_R, \pi^\ast \scrH_{A} \rb$ by
\begin{align}
\varphi(s)(z):=\exp(-z N) \cdot s(z).
\end{align}
This section $\varphi(s)$ is invariant under $z \mapsto z+1$, and descends to the section $\tilde{\varphi}(s) \in H^0 \lb D_\varepsilon \setminus \lc 0\rc, \scrH_{A} \rb$ defined by
\begin{align}\label{eq:dsection}
\tilde{\varphi}(s)(q):=\exp \lb - (2 \pi \sqrt{-1})^{-1} \log q \cdot N \rb \cdot s \lb (2 \pi \sqrt{-1})^{-1} \log q \rb.
\end{align}
The canonical extension $\widetilde{\scrH}_A$ is given by $\widetilde{\scrH_A}:= \tilde{\varphi} \lb H^0 \lb \bH_R, \pi^\ast H_A \rb \rb \otimes_\bC \scO_{D_\varepsilon}$.

For $\scE \in K(X_\Sigmav)$, we set
\begin{align}
\fraks(\scE)(\tau):= \lb 2 \pi \sqrt{-1} \rb^{-d} L_Y(\tau) \lb \widehat{\Gamma}_Y  
\cup \lb 2 \pi \sqrt{-1} \rb^{\frac{\mathrm{deg}}{2}} \mathrm{ch}(\iota^\ast \scE) \rb.
\end{align}
This is a flat section of $\scrH_A$ which is in the ambient $\widehat{\Gamma}$-integral structure $H^\mathrm{amb}_{A, \bZ}$ \pref{eq:latticea}.
The ambient $\widehat{\Gamma}$-integral structure $H_{A, \bZ}^\mathrm{amb}$ induces the integral structure $\widetilde{\scrH}_{A, \bZ}$ of $\widetilde{\scrH}_A$ defined by
\begin{align}
\widetilde{\scrH}_{A, \bZ}:= \lc \tilde{\varphi} \lb s(\scE) \rb \relmid \scE \in K(X_\Sigmav) \rc \subset \widetilde{\scrH}_A,
\end{align}
where $s(\scE):=(\varsigma \circ l \circ \pi)^\ast \lb \fraks \lb \scE \rb \rb$.
From \cite[Corollary 10.2.6]{MR1677117}, we can get
\begin{align}\label{eq:canon}
\tilde{\varphi}(s(\scE))(0) = \lb 2 \pi \sqrt{-1} \rb^{-d} \widehat{\Gamma}_Y  
\cup \lb 2 \pi \sqrt{-1} \rb^{\frac{\mathrm{deg}}{2}} \mathrm{ch}(\iota^\ast \scE).
\end{align}
Therefore, the restriction $\widetilde{\scrH}_{A, \bZ}(0)$ of $\widetilde{\scrH}_{A, \bZ}$ to $0 \in D_\varepsilon$ coincides with $H^\mathrm{amb}_{A, \bZ, 0}$ defined in \eqref{eq:intext}, and we have an identification
\begin{align}
\pi^\ast H_{A, \bZ}^\mathrm{amb} \cong \widetilde{\scrH}_{A, \bZ}(0) = H^\mathrm{amb}_{A, \bZ, 0},
\end{align}
given by $s(\scE) \mapsto \tilde{\varphi}(s(\scE))(0)$.
This identification preserves the pairing $Q_A$.

Now we have 
\begin{align}\label{eq:id}
H_{B, \bZ, q_0}^\mathrm{vc} \cong \pi^\ast H_{B, \bZ}^\mathrm{vc} \cong \pi^\ast H_{A, \bZ}^\mathrm{amb} \cong H_{A, \bZ, 0}^\mathrm{amb} \cong H^\bullet_{\psi, \bZ} \lb B, \iota_\ast \bigwedge^\bullet \scT_\bC \rb.
\end{align}
From \pref{th:1} and \pref{lm:5}, it turns out that the last isomorphism of \eqref{eq:id} also preserves the pairing.
Note that the cohomology ring $H^\bullet_\mathrm{amb} \lb Y, \bZ \rb$ is generated by the restrictions to $Y$ of the toric divisors on $X_\Sigmav$.
When we think of the monodromy \eqref{eq:monodromy2} as an automorphism of $H^\bullet_{\psi, \bZ} \lb B, \iota_\ast \bigwedge^\bullet \scT_\bC \rb$ via the identification \pref{eq:id}, it becomes the cup product of
\begin{align}
\exp \lb - 2 \pi \sqrt{-1} c_B \rb,
\end{align}
which coincides with the monodromy of $H_\bZ^\trop$.
Hence, we obtain $(H_{B, \bZ}^\mathrm{vc}, Q_B) \cong (H_\bZ^\mathrm{trop}, Q_\mathrm{trop})$.

Lastly, we compare the Hodge filtrations $\scrF_B$ and $\scrF_\trop$.
Let $\Psi \colon D_\varepsilon \to \check{D}$ be the map induced by \pref{eq:descend} under the current setup.
When we identify $H_{B, \bZ, q_0}^\mathrm{vc}$ with $H_{A, \bZ, 0}^\mathrm{amb}$ via \eqref{eq:id}, the limit Hodge structure $\Psi(0)$ is given by
\begin{align}
\Psi(0)=F_0:= \lc F_0^p:=\bigoplus_{i=0}^{d-p} H^{\leq 2(d-p)}_\mathrm{amb}(Y, \bC) \rc_{p=1}^d.
\end{align}
This filtration $\Psi(0)=F_0$ defines a filtration on
\begin{align}\label{eq:nu1}
\nu \colon \scO_{\lc 0 \rc}^\mathrm{log} \otimes_\bZ H_{B, \bZ}^\mathrm{vc} \xrightarrow{\sim} \scO_{\lc 0\rc}^\mathrm{log} \otimes_\bZ H_{A, \bZ, 0}^\mathrm{amb} =  \scO_{\lc 0\rc}^\mathrm{log} \otimes_\bC H^\bullet_\mathrm{amb} \lb Y, \bC \rb,
\end{align}
where $\nu$ is the restriction of the isomorphism \pref{eq:diskisom} to $\lc 0\rc^\mathrm{log}$.
By the isomorphism $\psi \otimes_\bZ \bC \colon H^\bullet_\mathrm{amb} \lb Y, \bC \rb \to H^\bullet_\psi \lb B, \iota_\ast \bigwedge^\bullet \scT_\bC \rb$, the $\scO_{\lc 0 \rc}^\mathrm{log}$-modules of \eqref{eq:nu1} are isomorphic to 
\begin{align}\label{eq:nu2}
\scO_{\lc 0 \rc}^\mathrm{log} \otimes_\bZ H_\bZ^\trop \xrightarrow{\sim} \scO_{\lc 0\rc}^\mathrm{log} \otimes_\bC H^\bullet_{\psi} \lb B, \iota_\ast \bigwedge^\bullet \scT_\bC \rb.
\end{align}
We also have $(\psi \otimes_\bZ \bC) \lb F_0 \rb =\lc \bigoplus_{i=0}^{d-p} H^i_\mathrm{\psi} \lb B, \iota_\ast \bigwedge^i \scT_\bC \rb \rc_{p=1}^d$.
This defines the filtration $\scrF_\mathrm{trop}$ \eqref{eq:filt} on \eqref{eq:nu2}.
\end{proof}

\begin{remark}\label{rm:res}
Concerning the relation between the monodromy and radiance obstructions, it is known by \cite[Theorem 5.1]{MR2669728} that in Gross--Siebert program, the cup product of the radiance obstruction coincides with the residue of the logarithmic extension of the Gauss--Manin connection under the identification of logarithmic Dolbeault cohomology groups and tropical cohomology groups $H^q \lb B, \iota_\ast \bigwedge^p \scT_\bC^\ast \rb$.
\end{remark}

For $p \in \lc 0, \cdots, d \rc$ and $z \in \lc z \in \bC \relmid \Im(z) > 0 \rc$, we define the subspaces $F^p_\mathrm{trop}(z), H^{p,q}_\mathrm{trop}(z)$ of $H^\bullet_{\psi} \lb B, \iota_\ast \bigwedge^\bullet \scT_\bC \rb$ by
\begin{align}\label{eq:nilp}
F^p_\mathrm{trop}(z)&:= \exp \lb - 2 \pi \sqrt{-1} z \cdot c_B \rb \cdot F^p_\mathrm{trop}, \\ \label{eq:nilp2}
H^{p,q}_\mathrm{trop}(z)&:= F^p_\mathrm{trop}(z) \cap \kappa (F^q_\mathrm{trop}(z)),
\end{align}
where $F_\mathrm{trop}^p:=\bigoplus_{i=0}^{d-p} H^i_\mathrm{\psi} \lb B, \iota_\ast \bigwedge^i \scT_\bC \rb$, $q=d-p$, and $\kappa$ denotes the real involution with respect to the real structure $H^\bullet_{\psi, \bZ} \lb B, \iota_\ast \bigwedge^\bullet \scT_\bC \rb \otimes_\bZ \bR$.
\eqref{eq:nilp} corresponds to the nilpotent orbit of the period map $\Phi$.

\textcolor{red}{}
\begin{remark}\label{rm:rs19}
Let $(\check{B}, \check{\scP}, \check{\varphi})$ be an integral affine manifold with simple singularities equipped with a polyhedral structure and a multi-valued piecewise affine function.
We set $A:=\bC \ld H^1\lb \check{B}, \iota_\ast \scT_\bZ^\ast \rb^\ast \rd$.
For $(\check{B}, \check{\scP}, \check{\varphi})$, we can construct a canonical formal family over $A\ld\ld q \rd\rd$ with central fiber classifying log Calabi--Yau spaces over the standard log point whose intersection complex is $(\check{B}, \check{\scP})$ \cite{MR2846484}, \cite[Theorem A.8]{GHS16}.
The family $\lc V_q \rc_q$ which we consider in this paper is a one-parameter subfamily of this family.

For the canonical family over $A\ld\ld q \rd\rd$, the period integral of the holomorphic volume form over a $d$-cycle $\beta$ constructed from a tropical $1$-cycle $\beta_\trop \in H_1\lb \check{B}, \iota_\ast \scT_\bZ \rb$ is computed in \cite[Theorem 1.7]{RS19}.
Here $H_1\lb \check{B}, \iota_\ast \scT_\bZ \rb$ denotes the sheaf homology of $\iota_\ast \scT_\bZ$.
The relation with this work can be explained as follows:
The result of the period integral contains the factor
\begin{align}\label{eq:factor1}
\log (q) \cdot \la \beta_\trop , c_1(\check{\varphi}) \ra,
\end{align}
where $c_1(\check{\varphi}) \in H^1\lb \check{B}, \iota_\ast \scT_\bZ^\ast \rb$ is the first Chern class of $\check{\varphi}$ (cf. \cite[Definition 1.46]{MR2213573}) and $\la -, - \ra$ is the pairing 
\begin{align}\label{eq:perf}
\la -, - \ra \colon H_1\lb \check{B}, \iota_\ast \scT_\bZ \rb \otimes H^1\lb \check{B}, \iota_\ast \scT_\bZ^\ast \rb \to \bZ.
\end{align}
This pairing is proved to be perfect over $\bQ$ in \cite[Theorem 3]{Rud20}.
Via the discrete Legendre transformation (cf. \cite[Section 1.4]{MR2213573}), this is isomorphic to
\begin{align}\label{eq:perf2}
H^{d-1} \lb B, \iota_\ast \bigwedge^{d-1} \scT_\bZ \rb \otimes H^1\lb B, \iota_\ast \scT_\bZ \rb \to \bZ,
\end{align}
where $B$ is the integral affine manifold with singularities that is discrete Legendre dual to $(\check{B}, \check{\scP}, \check{\varphi})$.
The holomorphic volume form $\Omega$ corresponds to the subspace $H^{d,0}_\mathrm{trop}(z)=\exp \lb - 2 \pi \sqrt{-1} z \cdot c_B \rb$ of \eqref{eq:nilp2}.
The pairing of the $H^1\lb B, \iota_\ast \scT_\bC \rb$-component of $H^{d,0}_\mathrm{trop}(z)$ and an element $\beta_\trop \in H_1\lb \check{B}, \iota_\ast \scT_\bZ \rb \cong H^{d-1} \lb B, \iota_\ast \bigwedge^{d-1} \scT_\bZ \rb$ is
\begin{align}\label{eq:factor2}
\la \beta_\trop, - 2 \pi \sqrt{-1} z \cdot c_B \ra=- 2 \pi \sqrt{-1} z \cdot \la \beta_\trop, c_B \ra.
\end{align}
The first Chern class $c_1(\check{\varphi})$ of $\check{\varphi}$ coincides with the radiance obstruction $c_B$ of $B$ under the isomorphism $H^1\lb \check{B}, \iota_\ast \scT_\bZ^\ast \rb \cong  H^1\lb B, \iota_\ast \scT_\bZ \rb$ \cite[Proposition 1.50.3]{MR2213573}.
Hence, \eqref{eq:factor1} and \eqref{eq:factor2} agree.
The description in \cite[Theorem 1.7]{RS19} is discrete Legendre dual to that in this paper.

\end{remark}

\begin{corollary}
When the imaginary part of $z$ is sufficiently large, one has
\begin{align}\label{eq:HR2}
\lb \sqrt{-1} \rb^{p-q} Q_\trop \lb \phi, \kappa(\phi) \rb > 0
\end{align}
for any element $\phi \in H^{p,q}_\mathrm{trop}(z) \setminus \lc 0 \rc$. 
\end{corollary}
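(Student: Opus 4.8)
The plan is to deduce \eqref{eq:HR2} from Schmid's nilpotent orbit theorem together with the identifications established in the proof of \pref{th:2}. There the tropical period $\lb H_\bZ^\trop, Q_\trop, \scrF_\trop \rb$ was identified with the restriction to the standard log point of the LVPH on $D_\varepsilon$ attached (after pulling back by $\varsigma \circ l$) to the residual B-model VHS $\lb \scrH_B, \nabla^B, H_{B,\bZ}^\mathrm{vc}, \scrF_B^\bullet, Q_B \rb$. Under the isomorphisms \eqref{eq:id}, the monodromy logarithm is $N = -2 \pi \sqrt{-1}\, \psi^{-1}(c_B)$, acting on $H^\bullet_{\psi,\bZ} \lb B, \iota_\ast \bigwedge^\bullet \scT_\bC \rb$ as the cup product of $-2 \pi \sqrt{-1}\, c_B$; the limit Hodge filtration $\Psi(0)=F_0$ corresponds to $\scrF_\trop$, i.e.\ to $F_\trop^p = \bigoplus_{i=0}^{d-p} H^i_\psi \lb B, \iota_\ast \bigwedge^i \scT_\bC \rb$; and $Q_B$ corresponds to $Q_\trop$. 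Hence, in the notation of \eqref{eq:nilp}, $F^p_\trop(z)$ is exactly the image of $\exp(zN)\cdot \Psi(0)$, and, since \eqref{eq:id} identifies the integral lattices, the involution $\kappa$ in the statement is the real involution appearing in the Hodge--Riemann relations \eqref{eq:HR} for the B-model VHS.

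Next I would invoke Schmid's nilpotent orbit theorem \cite[Theorem 4.9]{MR0382272} for this weight-$d$ variation of polarized Hodge structure with unipotent monodromy: the map $z \mapsto \exp(zN)\cdot \Psi(0)$ is its nilpotent orbit, so there is a real number $R'>0$ such that $\exp(zN)\cdot \Psi(0) \in D$ whenever $\Im z > R'$. This is precisely the positivity statement \eqref{eq:positivity} recalled after \pref{sc:ext}. It then remains only to unwind what $F^\bullet_\trop(z) = \exp(zN)\cdot \Psi(0) \in D$ says: by the definition of the period domain $D$ attached to $\lb H_{B}^\mathrm{vc}, Q_B \rb \cong \lb H_\bZ^\trop, Q_\trop \rb$, membership in $D$ means $Q_\trop\lb F^p_\trop(z), F^{d-p+1}_\trop(z) \rb = 0$ for all $p$ together with $\lb \sqrt{-1} \rb^{p-q} Q_\trop \lb v, \overline{v} \rb > 0$ for every $v \in F^p_\trop(z) \cap \overline{F^{d-p}_\trop(z)} \setminus \lc 0 \rc$, the bar denoting conjugation for the real structure identified above. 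Since $\kappa$ is that conjugation and $q=d-p$, we have $\kappa\lb F^q_\trop(z)\rb = \overline{F^{d-p}_\trop(z)}$, so $H^{p,q}_\trop(z)\setminus\lc 0 \rc$ consists exactly of such $v$, and \eqref{eq:HR2} holds for $\Im z > R'$.

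I expect the only delicate point to be the bookkeeping of conventions: one must be sure that the real structure, the weight $d$, and the polarization sign $\lb 2\pi\sqrt{-1}\rb^d (-1)^i$ built into $Q_\trop$ match those of the B-model datum under \eqref{eq:id}, so that ``lying in $D$'' really translates into \eqref{eq:HR2}. This compatibility was, however, already established in the proof of \pref{th:2}, where \eqref{eq:id} was shown to preserve the pairing, intertwine the monodromies, and identify the integral lattices (hence the conjugations); so no genuinely new estimate is needed. In effect the corollary is the ``positivity'' half of the polarized logarithmic Hodge structure of \pref{dl:period}, forced by the corresponding Hodge--Riemann relation \eqref{eq:HR} for the ambient A-model / residual B-model VHS via Schmid's theorem.
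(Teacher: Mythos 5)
Your proposal is correct and follows essentially the same route as the paper: you use Theorem~\ref{th:2} to identify the tropical period with the restriction to $\{0\}$ of the LVPH attached to the (pulled-back) residual B-model VHS, and then read off the inequality \eqref{eq:HR2} from the positivity condition \eqref{eq:positivity} of that polarized logarithmic Hodge structure, which is guaranteed by Schmid's nilpotent orbit theorem in the Kato--Usui extension procedure of Section~\ref{sc:ext}. The paper's proof simply cites Theorem~\ref{th:2} together with the equivalence of \eqref{eq:positivity} with \eqref{eq:HR2}, whereas you unpack the reason the positivity condition holds; the substance is the same.
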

\begin{proof}
From \pref{th:2}, we can see that the tropical period of $B$ is a polarized logarithmic Hodge structure on the standard log point $\lc 0\rc$.
The positivity condition \eqref{eq:positivity} of PLH is equivalent to \pref{eq:HR2}.
\end{proof}

\begin{remark}
When $p=d, q=0$ or $p=0, q=d$, the left hand side of \eqref{eq:HR2} is equal to
\begin{align}
&\lb \sqrt{-1} \rb^{d} Q_\trop \lb \exp \lb -2 \pi \sqrt{-1} z \cdot c_B \rb, \exp \lb -2 \pi \sqrt{-1} \overline{z} \cdot c_B \rb \rb \\
&=\lb \sqrt{-1} \rb^{d} \cdot \lb 2 \pi \sqrt{-1} \rb^{d} \cdot \frac{1}{d!} \cdot \lb 2 \pi \sqrt{-1} \cdot \lb z-\overline{z} \rb \rb^d \bigwedge^d c_B \\
&=\lb 2 \pi \rb^{2d} \cdot \frac{1}{d!} \cdot 2^d \cdot (\Im{z})^d \bigwedge^d c_B.
\end{align}
Hence, the inequality \eqref{eq:HR2} is equivalent to
\begin{align}\label{eq:volume}
\bigwedge^d c_B > 0.
\end{align}
\end{remark}

\begin{remark}
The inequalities \eqref{eq:HR2} impose constraints on the value of radiance obstructions.
Since the inequality \eqref{eq:HR2} correspond to the positivity condition of PLH, which stems from the Hodge--Riemann bilinear relation, the inequalities \eqref{eq:HR2} can be regarded as a tropical version of the Hodge--Riemann bilinear relations.
On the other hand, we can easily check that the equality \eqref{eq:HR1} does not impose any constraints on the value of radiance obstructions.
Griffiths transversality \eqref{eq:Gtrans} becomes
\begin{align}
- 2 \pi \sqrt{-1} \cdot c_B \cdot F_\mathrm{trop}^p \subset F_\mathrm{trop}^{p-1},
\end{align}
which follows immediately from the definition of $F_\mathrm{trop}^p$.
Hence, Griffiths transversality \eqref{eq:Gtrans} also does not impose any constraints on the value of radiance obstructions.
\end{remark}

\begin{remark}
Consider the case $d=2$.
The space in which radiance obstructions of tropical K3 hypersurfaces take value is
\begin{align}\label{eq:domain}
\lc \sigma \in H^2_\mathrm{amb}(Y, \bZ) \otimes_\bZ \bR \relmid \bigwedge^2 \sigma  >0 \rc.
\end{align}
This can be regarded as the period domain of tropical K3 hypersurfaces.
This is the numerator of the moduli space of lattice polarized tropical K3 surfaces \cite[Section 5]{HU18}.
In \cite{MR3859764}, \cite{OO18}, they construct Gromov--Hausdorff compactifications of polarized complex K3 surfaces by adding moduli spaces of lattice polarized tropical K3 surfaces to their boundaries.
\end{remark}

\bibliographystyle{amsalpha}
\bibliography{bibs}

\noindent
Yuto Yamamoto

Center for Geometry and Physics, Institute for Basic Science (IBS), Pohang 37673, Republic of Korea

{\em e-mail address}\ : \  yuto@ibs.re.kr
\ \vspace{0mm} \\

\end{document}